\theoremstyle{plain}
\newtheorem{theorem}{Theorem}[section]
\newtheorem{corollary}[theorem]{Corollary}
\newtheorem{lemma}[theorem]{Lemma}
\newtheorem{proposition}[theorem]{Proposition}
\newtheorem{definition}[theorem]{Definition}
\newtheorem{assumption}[theorem]{Assumption}
\theoremstyle{remark}
\newtheorem{remark}[theorem]{Remark}
\newtheorem{example}[theorem]{Example}
\numberwithin{equation}{section}
\newcommand{\ind}{1\!\kern-1pt \mathrm{I}}
\newcommand{\rsto}{]\!\kern-1.8pt ]}
\newcommand{\lsto}{[\!\kern-1.7pt [}
\renewcommand{\theenumi}{\rm{(\roman{enumi})}}
\numberwithin{equation}{section}
\newcommand{\diag}{\mathop{\mathsf{diag}}}
\renewcommand{\rho}{\varrho}
\begin{document}
\title[Generalized Feller processes and Markovian lifts: the affine
case]{Generalized Feller processes and Markovian lifts of stochastic
  Volterra processes: the affine case}

\begin{abstract}
  We consider stochastic (partial) differential equations appearing as
  Markovian lifts of affine Volterra processes with jumps from the
  point of view of the generalized Feller property which was
  introduced in e.g.~\cite{doetei:10}. In particular we provide new
  existence, uniqueness and approximation results for Markovian lifts
  of affine rough volatility models of general jump diffusion type. We
  demonstrate that in this Markovian light the theory of stochastic
  Volterra processes becomes almost classical.
\end{abstract}

\thanks{The authors are grateful for the support of the ETH
  Foundation. Christa Cuchiero thanks the Forschungsinstitut f\"ur
  Mathematik for its support of a research stay at ETH Z\"urich in
  fall 2016. She additionally gratefully acknowledges financial
  support by the Vienna Science and Technology Fund (WWTF) under grant
  MA16-021. \\ Both authors are very grateful to Sergio Pulido for
  helping us to improve Subsection 5.2 and to an anonymous referee
  whose comments helped to improve the article along several lines.}

\keywords{variation of constants formula, stochastic partial
  differential equations, affine processes, stochastic Volterra
  processes, rough volatility models} \subjclass[2010]{60H15, 60J25}

\author{Christa Cuchiero and Josef Teichmann}

\address{Vienna University of Economics and Business, Welthandelsplatz
  1, A-1020 Vienna and ETH Z\"urich, R\"amistrasse 101, CH-8092
  Z\"urich}
\maketitle

\section{Introduction}\label{sec:intro}

In the realm of the recent discovery of rough volatility models (see
e.g., \cite{AY:14, GatJaiRos:14, ER:16, EFR:16}), stochastic Volterra
processes have been studied extensively, for instance in
\cite{JaiRos:16, AbiLarPul:17, ElERos:17} and the references
therein. It is well known that Markovian lifts of such processes
exist, but it has rarely been detailed how these lifts look like in
general, with the notable exception of \cite{MS:15} and the recent
article \cite{AbiElE:18b}.  If at all, these lifts have been
constructed by solving first the stochastic Volterra equations and
identifying a state space a posteriori, on which respective Markov
properties show up. Neither maximum principles, nor strongly
continuous semigroups, nor Feller properties, nor approximation
properties, nor Kolmogorov backward equations have been considered in
this context so far. The reason is of course that one enters the world
of SPDEs where each of the previously mentioned concepts needs
particular care and is often cumbersome due to the lack of local
compactness of the underlying state spaces. Also the question whether
all those considerations are restricted to Brownian driven processes
has not yet been investigated due to technical difficulties in writing
down (probabilistically) weak or martingale solution concepts with
jump drivers that are not standard Poisson random measures.
 
With this article we aim to bridge this gap and introduce a functional
analytic setting which clearly shows that it is an advantage to
consider Markovian lifts right from the beginning without solving the
Volterra equation in the first place. As a particular important
example we deal with Volterra processes whose kernels are Laplace
transforms of (signed) measures (see Section
\ref{sec:markovianlift}). Their Markovian lifts can be represented in
terms of signed measure valued processes which can be treated in a
systematic way comparable to Feller processes on locally compact state
spaces. In particular, we demonstrate that many results in the realm
of stochastic Volterra processes appear transparent and clearly
structured in this framework. Indeed, thanks to the generalized Feller
theory introduced in \cite{doetei:10} many properties and theorems can
be formulated abstractly while in the literature so far often rather
concrete and example-like specifications have been considered. The
passage to these \emph{generalized} Feller processes is necessary,
since no \emph{standard} Feller formulation is available due to the
lack of local compactness, as demonstrated in Example
\ref{ex:local_compactness}. This is in contrast to superprocesses,
see, e.g.~\cite{daw:12}, which take values in spaces of nonnegative
measures which are locally compact when the underlying space is
compact.

We do also believe that the presented theory of generalized Feller
semigroups and processes is a natural framework for analyzing SPDEs
valid also far beyond Markovian lifts of stochastic Volterra
processes: it opens the door to treat large deviations, long term
behavior, numerical procedures, or asymptotic results in a general and
simple framework.

This first article takes as guiding example affine Volterra processes
on $\mathbb{R}_+$ of the form
\begin{align}\label{eq:V}
  V_t = h(t) + \int_0^t K(t-s) dX_s,
\end{align}
where $h$ is a deterministic function, $K$ a deterministic kernel and
$X$ a semimartingale whose characteristics depend linearly on the
state of the Volterra process. In a subsequent article the general
(non-affine) case, including properly formulated martingale problems
and strong existence results, as well as other versions of Markovian
lifts will be considered. Here we focus on Markovian lifts where the
kernel $K$ can be represented as
\begin{align}\label{eq:kernelintro}
  K(t)= \langle g, \mathcal{S}^*_t \nu  \rangle
\end{align}
with $(\mathcal{S}^*_t)_{t \geq 0}$ a strongly continuous semigroup
acting on a Banach space $Y^*$, $\nu \in Y^*$ (or in a slightly bigger
space), $g \in Y$ with $Y$ the pre-dual of $Y^*$ and pairing
$\langle \cdot, \cdot \rangle$. This abstract setting includes the
above mentioned signed measure valued lifts as well as what we call
``forward process lifts'', where a variant of the latter has also been
considered in \cite{AbiElE:18b} in the special case of the Volterra
Heston model. This richness already indicates that our almost
axiomatic approach does not only simplify arguments but also unifies
several concepts and branches of the literature. In particular, it
provides due to its simple structure new existence and uniqueness
results for \eqref{eq:V}, and leads to so far unknown approximation
schemes of any order. For instance it will be easy to construct higher
order weak approximation schemes like Ninomiya-Victoir schemes with
precise and optimal convergence rates.

Inspired from Hawkes processes, see e.g.~\cite{hawkes:1971,
  boumezoued:2016}, and in contrast to some recent literature,
e.g. \cite{MS:15, AbiLarPul:17, AbiElE:18a, AbiElE:18b}, we do not
consider the Brownian motion driven case as the simplest one but
rather the case when the stochastic driver is a finite activity jump
process (as it is often true and useful in the theory of stochastic
processes): Brownian driven stochastic Volterra equations or
stochastic Volterra equations with more complicated jump structures
are then easy and well described limits (in the sense of generalized
Feller processes) of processes with finitely many jumps on compact
time intervals, from which then weak solutions can be
constructed. Another difference to most of the literature is that we
do not transform the processes in question into semimartingales to use
stochastic calculus but we rather work directly with their Markovian
structure. We also want to point out that with representation
\eqref{eq:kernelintro} we go well beyond standard assumptions on
Volterra kernels like complete monotonicity, in particular we do not
need resolvents of the first kind which do not always exist.

Let us outline in the sequel one of our lines of ideas in the above
addressed case of kernels that are Laplace transforms of signed
measures: we consider the vector space of finite, signed Borel
measures $Y^*:= \mathcal{M}(\mathbb{R}_{+} \cup \{\infty\})$ with its
weak-$*$-topology, see \cite{SchWol:99} or any other text book on
functional analysis, and look at the following simple linear
homogenous equation thereon
\[
  d \lambda_t (dx) = - x \lambda_t(dx) dt - w \nu (dx)
  \lambda_t(\mathbb{R}_{+} \cup \{\infty\}) dt,
\]
where the initial value $ \lambda_0 $ is a signed measure, $w \geq 0$
a constant and $ \nu $ a signed measure such that its Laplace
transform
\[
  \int_0^\infty \exp(-tx) \nu(dx) \geq 0
\]
is finite and non-negative for all $ t > 0 $, and belongs to
$ L^2_{\operatorname{loc}}(\mathbb{R}_{+}, \mathbb{R}_+) $. The
representation of the kernel in \eqref{eq:kernelintro} is here
\begin{align}\label{eq:kernelintro1}
  K(t)=\langle g, \mathcal{S}^*_t \nu \rangle= \langle1, \exp(-t \cdot) \nu \rangle= \int_0^\infty \exp(-tx) \nu(dx)
\end{align}
i.e.~the evaluation of the element $ \exp(-t \cdot) \nu $ in the dual
space $\mathcal{M}(\mathbb{R}_+ \cup \{\infty\})$ of
$Y= C_b(\mathbb{R}_{+} \cup \{ \infty \}, \mathbb{R}) $ on the
constant function $ 1 $.  Under these circumstances one can easily
prove two facts: first the solution defines a generalized Feller
process (see \cite{doetei:10} and the subsequent sections for details
on this notion) on the state space of all signed measures, and,
second, there is a closed invariant subspace $ \mathcal{E} $
consisting of initial measures $ \lambda_0 $ where the total mass
$\langle 1 , \lambda_t \rangle= \lambda_t(\mathbb{R}_{+} \cup
\{\infty\}) $ remains non-negative along its evolution in time for all
$w \geq 0$. Of course the total mass satisfies a one-dimensional
linear Volterra equation, whence the set $ \mathcal{E}$ of measures
$ \lambda_0 $ can be characterized by resolvents, see Section
\ref{sec:markovianlift_abstract} and Section \ref{sec:markovianlift}
for all details. Next we construct on this very state space
$ \mathcal{E} $ all sorts of jump diffusion processes with general
characteristics by Poisson approximations applying an approximation
theorem for generalized Feller semigroups (see Theorem
\ref{thm:approximation_gen}). This is parallel to classical work on
affine processes, see \cite{DufFilSch:03, CFMT:11, CKMT:14}, where
general existence is obtained by pure jump approximations. We can
finally show that the SPDE
\begin{align}\label{eq:SPDEintro}
  d \lambda_t (dx) = - x \lambda_t(dx) dt + \nu (dx) dX_t
\end{align}
for all semimartingales $X$ whose characteristics depend linearly on
$\lambda_t(\mathbb{R}_+ \cup \{\infty\})$ admits an analytically mild
and probabilistically weak solution. Moreover, the associated Cauchy
problem has a solution given in terms of a generalized Feller
semigroup acting on a well specified set of functions.  In particular,
by the variation of constants formula and \eqref{eq:kernelintro1} it
is easily seen that the total mass
$\langle 1 , \lambda_t \rangle= \lambda_t(\mathbb{R}_{+} \cup
\{\infty\}) $ solves \eqref{eq:V} with
$h(t)=\int_0^{\infty} \exp(-xt) \lambda_0(dx)$.

We also want to point out that we obtain generic path properties,
i.e.~in this general setting we shall always have c\`ag versions but
not necessarily c\`adl\`ag versions. This is in contrast to Feller
processes where c\`adl\`ag versions do always exist. This is due to the
fact that generalized Feller processes are not always
quasi-contractive.

This previous example highlights our results which can be summarized
as follows:
\begin{itemize}
\item we provide a full solution theory for univariate stochastic
  Volterra equations \eqref{eq:V} driven by jump-diffusions. In
  particular for a large class of functionals we can provide solutions
  of the respective Kolmogorov equations.
\item we provide approximation schemes, (generalized) maximum
  principles (in certain cases), (probabilistically) weak solutions
  concepts for Markovian lifts of jump-diffusion driven stochastic
  Volterra equations \eqref{eq:V}.
\item we provide a solution theory of the corresponding generalized
  Riccati differential equations on the pre-dual space
  $ \mathcal{E}_* $ and the respective affine transform formulas.
\item we can go beyond standard assumptions on kernels of stochastic
  Volterra equations such as complete monotonicity.
\item we provide an abstract theory of Markovian lifts which is not
  restricted to the above introduced univariate case. Indeed, by
  considering as $Y^*$ the space of $\mathbb{R}^d$-valued measures on
  $E$ for some locally compact space $E$, \eqref{eq:SPDEintro} can be
  generalized to the following multivariate case
  \[
    d \lambda^l_t (dx) = - f^l(x) \lambda^l_t(dx) dt + \sum_{j=1}^d
    \nu^l_{j}(dx) dX^j_t, \quad
  \]
  for $l \in \{1, \ldots,d\}$ in a straightforward way. This is due to
  the completely general character of Proposition
  \ref{prop:jump_perturbation} below.  Here, $ f^l$ are functions from
  $ E \to \mathbb{R}_{+} $ satisfying appropriate conditions, and
  $ X^j $ are semimartingales whose characteristics depend linearly on
  $\lambda$.  Note that this allows for Volterra process
  specifications of the form
  \[
    V_t^l=h^l(t)+ \int_0^t \sum_{j=1}^d K^l_j(t-s) dX^j_s,
  \]
  for kernels $K^l_j$ given by
  $K^l_j(t)= \int_E g^l(x) e^{-t f^l(x)}\nu^l_{j}(dx)$ for some
  bounded continuous $g^l \in C_b( E ; \mathbb{R})$ and
  $h^l(t)= \int_E g^l(x) e^{-t f^l(x)}\lambda^l_{0}(dx)$.

\item Heston-like or Barndorff-Nielsen-Shephard-like structures can be
  analogously constructed again by choosing appropriate Banach spaces
  $ Y $. For instance the rough Heston case \cite{ER:16} can be
  treated with
  $ Y = C_b(\mathbb{R}_{+} \cup \{\infty\}; \mathbb{R}^2) $ and the
  equations
  \begin{align*}
    d \lambda^1_t(dx)& = -x \lambda^1_t(dx) dt + \nu(dx) \sqrt{\langle
                       1, \lambda^1_t \rangle} dB^1_t,\\
    d \lambda^2_t(dx) &= -x \lambda^2_t(dx) dt + \delta_0(dx)
                        \sqrt{\langle 1, \lambda^1_t \rangle} dB^2_t,
  \end{align*}
  where $B^1$ and $B^2$ are correlated Brownian motions with
  $ d \langle B^1, B^2 \rangle_t = \rho dt $.  Note that the variance
  process is given by $\lambda_t^1(\mathbb{R}_+ \cup \{\infty\})$ and
  the log price process by
  $\lambda_t^2(\mathbb{R}_+ \cup \{\infty\})$.
\item Covariance matrix (or cone-) valued processes can be constructed
  by considering positive semidefinite symmetric matrix (or cone-)
  valued measures as respective state space. Here particular geometric
  restrictions appear in the diffusion driven case.  Markovian lifts
  of Volterra type processes on this more involved state space,
  relevant for rough covariance modeling, are treated in \cite{CT:19}.
\end{itemize}

The remainder of the article is as follows: in Section \ref{sec:not}
we introduce some notation and review certain functional analytic
concepts. In Section \ref{sec:genFeller}, we deal with generalized
Feller processes as introduced in \cite{doetei:10}. In Section
\ref{sec:approximationTheorems} we show simple approximation theorems
for generalized Feller semigroups beyond standard Trotter-Kato type
theorems, and we prove a crucial existence result for pure jump
processes with finite but unbounded intensity. In Sections
\ref{sec:markovianlift_abstract}, \ref{sec:markovianlift} and
\ref{sec:markovianlift_forward} we apply the presented theory to SPDEs
which are lifts of affine Volterra processes.

\subsection{Notation and some functional analytic
  notions}\label{sec:not}

For the background in functional analysis we refer to the excellent
textbook \cite{SchWol:99} as main reference and to the equally
excellent books \cite{engnag:00,paz:83} for the background in strongly
continuous semigroups. We emphasize however that only very basic
knowledge in functional analysis and strongly continuous semigroups is
required.

We shall often apply the following notations: let $ Y $ be a Banach
space and $ Y^* $ its dual space, i.e.~the space of linear continuous
functionals with the strong dual norm
\[ {\|\lambda\|}_{Y^*} = \sup_{\|y\| \leq 1} | \langle y , \lambda
  \rangle | \, ,
\]
where $ \langle y , \lambda \rangle := \lambda(y) $ denotes the
evaluation of the linear functional $ \lambda $ at the point
$ y \in Y $. Since cones $ \mathcal{E} $ of $ Y^* $ will be our
statespaces, we denote the polar cones in pre-dual notation, i.e.
\[
  \mathcal{E}_* = \big \{ y \in Y \, | \; \langle y , \lambda \rangle
  \leq 0 \text{ for all } \lambda \in \mathcal{E} \big \} .
\]
We denote spaces of bounded linear operators from Banach spaces
$ Y_1 $ to $ Y_2 $ by $ L(Y_1,Y_2) $ with norm
\[ {\| A \|}_{L(Y_1,Y_2)} := \sup_{{\|y_1\|}_{Y_1}\leq 1} {\| Ay_1
    \|}_{Y_2} \, .
\]
If $Y_1=Y_2$ we only write $\|\cdot \|_{L(Y_1)}$. On $ Y^* $ we shall
usually consider beside the strong topology (induced by the strong
dual norm) the weak-$*$-topology, which is the weakest locally convex
topology making \emph{all} linear functionals
$ \langle y , \cdot \rangle $ on $ Y^* $ continuous.  Let us recall
the following facts:
\begin{itemize}
\item The weak-$*$-topology is metrizable if and only if $ Y $ is
  finite dimensional: this is due to Baire's category theorem since
  $Y^*$ can be written as a countable union of closed sets, whence at
  least one has to contain an open set, which in turn means that
  compact neighborhoods exist, i.e.~a strictly finite dimensional
  phenomenon.
\item Norm balls $ K_R $ of any radius $ R $ in $ Y^* $ are compact
  with respect to the weak-$*$-topology, which is the Banach-Alaoglu
  theorem.
\item These balls are metrizable if and only if $ Y $ is separable:
  this is true since $ Y $ can be isometrically embedded into
  $ C(K_1) $, where $ y \mapsto \langle y,\cdot \rangle $, for
  $ y \in Y $. Since $ Y $ is separable, its embedded image is
  separable, too, which means -- by looking at the algebra generated
  by $ Y $ in $ C(K_1) $ -- that $ C(K_1) $ is separable, which is the
  case if and only if $ K_1 $ is metrizable.
\end{itemize}
Since we do not need metrizability of $K_R$, we do \emph{not} assume
that our Banach spaces $ Y $ are separable.

A family of linear operators $ {(P_t)}_{t \geq 0} $ on a Banach space
$ Y $ with $ P_t P_s = P_{t+s} $ for $ s,t \geq 0 $ and with
$ P_0 =I $ where $I$ denotes the identity is called strongly
continuous semigroup if $ \lim_{t \to 0} P_t y = y $ holds true for
every $ y \in Y $. We denote its generator usually by $ A $ which is
defined as $ \lim_{t \to 0} \frac{P_t y - y}{t} $ for all
$ y \in \operatorname{dom}(A) $, i.e.~the set of elements where the
limit exists. Notice that $ \operatorname{dom}(A) $ is left invariant
by the semigroup $ P $ and that its restriction on the domain equipped
with the operator norm
\[ {\| y \|}_{\operatorname{dom}(A)} := \sqrt{\|y\|^2 + \|Ay\|^2}
\]
is again a strongly continuous semigroup.

Moreover, as already used in the introduction, $\mathcal{M}(E)$
denotes the space of signed finite measures on $E$ equipped with the
Borel $\sigma$-algebra and $\mathcal{M}_+(E)$ the space of finite
nonnegative measures.

\section{Generalized Feller semigroups and
  processes}\label{sec:genFeller}

Feller semigroups have proved to be useful in the context of locally
compact state spaces. Generalized Feller semigroups serve the same
purpose on state spaces which are not locally compact, which is a
typical infinite dimensional phenomenon.  Local compactness is
replaced by adjoining a \emph{proper} weight function to a state space
$ X $, which measures explosion. Interestingly even in the locally
compact case generalized Feller processes yield new results relevant
for the theory of affine processes, see e.g.~\cite{DufFilSch:03,
  CFMT:11, CKMT:14}, since unbounded test functions can be
considered. We refer to \cite{doetei:10} for all necessary details and
proofs, and also to the references therein. Notice that even in the
infinite dimensional setting there are notable examples of local
compactness, see Remark \ref{rem:local_compactness}, but we point out
that our cases are not locally compact in their natural topology, see
Example \ref{ex:local_compactness}.

\subsection{Defintions and results}
First we introduce weighted spaces and state a central Riesz
representation result. The underlying space $X$ here is a completely
regular Hausdorff topological space.
\begin{definition}
  A function $\rho\colon X\to(0,\infty)$ is called \emph{admissible
    weight function} if the sets
  $K_R:=\left\{ x\in X\colon \rho(x)\le R \right\}$ are compact for
  all $R>0$.
\end{definition}
An admissible weight function $\rho$ is necessarily lower
semicontinuous and bounded from below by a positive constant. We call
the pair $X$ together with an admissible weight function $\rho$ a
\emph{weighted space}. A weighted space is $\sigma$-compact, i.e.~a
countable union of compacts. In the following remark we clarify the
question of local compactness of convex subsets
$\mathcal{E} \subset X$ when $X$ is a locally convex topological space
and $\rho$ convex.

\begin{remark}\label{rem:local_compactness}
  Let $X$ be a locally convex topological space and $\mathcal{E}$ a
  convex subset.  Moreover, let $\rho$ be a \emph{convex} admissible
  weight function.  Then $ \rho $ is continuous on $ \mathcal{E} $ if
  and only if $\mathcal{E}$ is locally compact. Indeed if $ \rho $ is
  continuous on $\mathcal{E}$, then of course the topology on
  $ \mathcal{E} $ is locally compact since every point has a compact
  neighborhood of type $ \{ \rho \leq R \} $ for some $ R > 0 $. On
  the other hand if the topology on $ \mathcal{E} $ is locally
  compact, then for every point $ \lambda_0 \in \mathcal{E} $ there is
  a a convex, compact neighborhood $ V \subset \mathcal{E}$ such that
  $ \rho(\lambda)-\rho(\lambda_0) $ is bounded on $ V $ by a number
  $ k > 0 $, whence by convexity
  $ |\rho(s(\lambda-\lambda_0)+\lambda_0)-\rho(\lambda_0)| \leq s k $
  for $ \lambda - \lambda_0 \in s(V-\lambda_0) $ and $ s \in ]0,1]
  $. This in turn means that $ \rho $ is continuous at $ \lambda_0 $.
\end{remark}

From now on $\rho$ shall always denote an admissible weight
function. For completeness we start by putting definitions for general
Banach space valued functions, although in the sequel we shall only
deal with $\mathbb{R}$-valued functions: let $Z$ be a Banach space
with norm ${\lVert \cdot \rVert}_Z $. The vector space
\begin{equation}
  \mathrm{B}^\rho(X;Z):=\left\{ f\colon X\to Z \colon \sup_{x\in X}\rho(x)^{-1}{\lVert f(x)\rVert}_Z < \infty \right\}
\end{equation}
of $ Z $-valued functions $ f $ equipped with the norm
\begin{equation}
  \lVert f\rVert_{\rho}:=\sup_{x\in X}\rho(x)^{-1}{\lVert f(x) \rVert}_Z,
\end{equation}
is a Banach space itself. It is also clear that for $Z$-valued bounded
continuous functions the continuous embedding
$\mathrm{C}_b(X;Z)\subset\mathrm{B}^\rho(X;Z)$ holds true, where we
consider the supremum norm on bounded continuous functions,
i.e. $\sup_{x \in X}\|f(x)\|$.
\begin{definition}
  We define $\mathcal{B}^{\rho}(X;Z)$ as the closure of
  $\mathrm{C}_b(X;Z)$ in $\mathrm{B}^{\rho}(X;Z)$. The normed space
  $\mathcal{B}^{\rho}(X;Z)$ is a Banach space.
\end{definition}

If the range space $Z=\mathbb{R}$, which from now on will be the case,
we shall write $ \mathcal{B}^\rho(X) $ for
$\mathcal{B}^{\rho}(X; \mathbb{R})$ and analogously $B^{\rho}(X)$.

We consider elements of $ \mathcal{B}^\rho(X) $ as continuous
functions whose growth is controlled by $ \rho $. More precisely we
have by \cite[Theorem 2.7]{doetei:10} that $f\in\mathcal{B}^{\rho}(X)$
if and only if $f|_{K_R}\in \mathrm{C}(K_R)$ for all $R>0$ and
\begin{equation}
  \label{eq:Bdecay}
  \lim_{R\to\infty}\sup_{x\in X\setminus K_R}\rho(x)^{-1}\lVert f(x)\rVert=0 \, .
\end{equation}
Additionally, by \cite[Theorem 2.8]{doetei:10} it holds that for every
$f\in\mathcal{B}^\rho(X)$ with\\ $\sup_{x\in X}f(x)>0$, there exists
$z\in X$ such that
\begin{equation}
  \rho(x)^{-1}f(x) \le \rho(z)^{-1}f(z) \quad\text{for all $x\in X$},
\end{equation}
which emphasizes the analogy with spaces of continuous functions
vanishing at $ \infty $ on locally compact spaces.

Let us now state the following crucial representation theorem of Riesz
type:
\begin{theorem}[Riesz representation for
  $\mathcal{B}^\rho(X)$]\label{theorem:rieszrepresentation}
  For every continuous linear functional
  $\ell\colon\mathcal{B}^\rho(X)\to\mathbb{R}$ there exists a finite
  signed Radon measure $\mu$ on $X$ such that
  \begin{equation}
    \ell(f)=\int_{X}f(y)\mu(dy)\quad\text{for all $f\in\mathcal{B}^\rho(X)$.}
  \end{equation}
  Additionally
  \begin{equation}
    \label{eq:rieszrepresentation-psiintbound}
    \int_{X}\rho(y)\lvert \mu\rvert( dy) = \lVert \ell\rVert_{L(\mathcal{B}^\rho(X),\mathbb{R})},
  \end{equation}
  where $\lvert\mu\rvert$ denotes the total variation measure of
  $\mu$.
\end{theorem}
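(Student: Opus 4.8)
The plan is to reduce the representation on $\mathcal{B}^\rho(X)$ to the classical Riesz representation theorem on the spaces $\mathrm{C}(K_R)$, which are compact Hausdorff spaces, and then to glue the measures obtained on each $K_R$ into a single finite signed Radon measure on $X$. First I would fix a continuous linear functional $\ell\colon\mathcal{B}^\rho(X)\to\mathbb{R}$. For each $R>0$, restriction of functions to $K_R$ gives a map, but one must go the other way: given $f\in\mathrm{C}(K_R)$, extend it (by Tietze, since $K_R$ is compact Hausdorff hence normal, and we may multiply by a cutoff controlled by $\rho$) to an element $\tilde f\in\mathrm{C}_b(X)\subset\mathcal{B}^\rho(X)$; then $f\mapsto\ell(\tilde f)$ would define a linear functional on $\mathrm{C}(K_R)$ once one checks it is independent of the chosen extension. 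Independence is exactly the statement that $\ell$ annihilates every $g\in\mathcal{B}^\rho(X)$ with $g|_{K_R}=0$ — this I would need to prove, and it is the technical heart of the argument: such a $g$ can be approximated in $\|\cdot\|_\rho$ by functions supported outside $K_{R'}$ for $R'$ slightly below $R$ (using \eqref{eq:Bdecay} to kill the tail and the fact that $g$ vanishes on $K_R$ to handle the middle region via a partition-of-unity/cutoff argument), and these functions have $\|\cdot\|_\rho$-norm going to zero, so $\ell(g)=0$.

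Granting that, the classical Riesz theorem on the compact space $K_R$ produces a finite signed Radon measure $\mu_R$ on $K_R$ with $\ell(\tilde f)=\int_{K_R}f\,d\mu_R$ for all $f\in\mathrm{C}(K_R)$. The measures $\mu_R$ are consistent: for $R_1<R_2$, the restriction of $\mu_{R_2}$ to $K_{R_1}$ equals $\mu_{R_1}$, because both represent the same functional on $\mathrm{C}(K_{R_1})$ via extensions. Since $X=\bigcup_{R>0}K_R$ is $\sigma$-compact (indeed $X=\bigcup_{n\in\mathbb{N}}K_n$ with $K_n$ increasing), I would define $\mu$ on Borel sets $B\subset X$ by $\mu(B):=\lim_{n\to\infty}\mu_n(B\cap K_n)$, checking that this limit exists and defines a countably additive signed measure. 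The key estimate making this work is the uniform total-variation bound: for each $R$, testing $\mu_R$ against functions $f$ with $|f|\le\rho$ on $K_R$ (which have $\|f\|_\rho\le 1$) gives $\int_{K_R}\rho\,d|\mu_R|\le\|\ell\|$, so the total variations $\int_X\rho\,d|\mu|$ are bounded by $\|\ell\|$; in particular, since $\rho$ is bounded below by a positive constant on each $K_R$ (admissible weight functions are bounded below, though possibly only locally — one uses boundedness below by a positive constant, which follows since $K_R$ compact and $\rho>0$ lower semicontinuous), $|\mu|(X)<\infty$, so $\mu$ is a genuine finite signed Radon measure.

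It remains to verify the representation formula $\ell(f)=\int_X f\,d\mu$ for \emph{all} $f\in\mathcal{B}^\rho(X)$, not just bounded continuous ones on a fixed $K_R$. For $f\in\mathrm{C}_b(X)$ this follows by monotone/dominated convergence from $\ell(f)=\ell(\tilde{f|_{K_n}})+\ell(f-\tilde{f|_{K_n}})$ once one shows the error term tends to zero — again using \eqref{eq:Bdecay} to control $f-\tilde{f|_{K_n}}$ in $\|\cdot\|_\rho$. For general $f\in\mathcal{B}^\rho(X)$, approximate by $\mathrm{C}_b(X)$ in $\|\cdot\|_\rho$ (this is the definition of $\mathcal{B}^\rho(X)$ as a closure) and pass to the limit on both sides, using that $g\mapsto\int_X g\,d\mu$ is $\|\cdot\|_\rho$-continuous precisely because $\int_X\rho\,d|\mu|<\infty$. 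Finally, for the norm identity \eqref{eq:rieszrepresentation-psiintbound}, the inequality $\int_X\rho\,d|\mu|\le\|\ell\|$ is the bound already obtained; the reverse inequality $\|\ell\|\le\int_X\rho\,d|\mu|$ is immediate from $|\ell(f)|=|\int f\,d\mu|\le\int|f|\,d|\mu|\le\|f\|_\rho\int\rho\,d|\mu|$. I expect the main obstacle to be the gluing/consistency step together with the careful proof that $\ell$ kills functions vanishing on a sublevel set — this is where the structure of $\mathcal{B}^\rho(X)$ (as opposed to plain $\mathrm{C}_b$) and property \eqref{eq:Bdecay} are genuinely used, and where one must be most careful with the cutoff constructions.
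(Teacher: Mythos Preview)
The paper itself does not supply a proof of this theorem; it is quoted from \cite{doetei:10}. So there is no ``paper's own proof'' to compare against, and I will assess your argument on its merits.

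Your plan contains a genuine error at what you yourself flag as the technical heart: the claim that $\ell$ annihilates every $g\in\mathcal{B}^\rho(X)$ with $g|_{K_R}=0$ is \emph{false}. Take $X=\mathbb{R}$, $\rho(x)=1+x^2$, and let $\ell(f)=f(10)$; this is a bounded functional on $\mathcal{B}^\rho(X)$ since $|f(10)|\le\rho(10)\|f\|_\rho$. With $R=2$ one has $K_R=[-1,1]$, and any bump $g$ supported near $10$ vanishes on $K_R$ yet $\ell(g)=g(10)\neq 0$. Your proposed justification (``approximate $g$ by functions supported outside $K_{R'}$ whose $\|\cdot\|_\rho$-norm goes to zero'') cannot be right: if $g_n\to g$ in $\|\cdot\|_\rho$ and $\|g_n\|_\rho\to 0$, then $g=0$. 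The decay condition \eqref{eq:Bdecay} controls the tail as $R\to\infty$, not the values of $g$ outside a \emph{fixed} $K_R$.

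What \emph{is} true, and what replaces your false claim, is the quantitative estimate: if $g\in\mathrm{C}_b(X)$ with $g|_{K_R}=0$, then $\rho(x)>R$ wherever $g(x)\neq 0$, so $\|g\|_\rho\le \|g\|_\infty/R$ and hence $|\ell(g)|\le \|\ell\|\,\|g\|_\infty/R$. This is a \emph{tightness} condition on $\ell|_{\mathrm{C}_b(X)}$, not a vanishing condition for fixed $R$. With it one can either (a) invoke the Riesz theorem for tight functionals on $\mathrm{C}_b(X)$ over completely regular spaces, or (b) first pass to positive and negative parts (using that $\mathcal{B}^\rho(X)$ is a Banach lattice) and then, for a positive $\ell$, build the measure directly via $\mu(K)=\inf\{\ell(g):g\in\mathrm{C}_b(X),\,g\ge\mathbf{1}_K\}$ on compacts and extend. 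Either route uses your compact exhaustion $K_R$ and the bound $\int_{K_R}\rho\,d|\mu|\le\|\ell\|$ in the way you describe; only the step defining $\ell_R$ needs to be replaced.
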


We shall next consider positive, strongly continuous semigroups on
$ \mathcal{B}^\rho(X) $ and recover very similar structures as for
standard Feller semigroups on the space of continuous functions
vanishing at $ \infty $ on locally compact spaces.

\begin{definition}\label{def:genFeller}
  A family of bounded linear operators
  $P_t\colon\mathcal{B}^{\rho}(X)\to\mathcal{B}^{\rho}(X)$ for
  $ t \geq 0 $ is called \emph{generalized Feller semigroup} if
  \begin{enumerate}
    \renewcommand{\theenumi}{{\bf F\arabic{enumi}}}
  \item
    \label{enu:defgenfeller-0id}
    $P_0=I$, the identity on $\mathcal{B}^{\rho}(X)$,
  \item $P_{t+s}=P_tP_s$ for all $t$, $s\ge 0$,
  \item
    \label{enu:defgenfeller-pwconv}
    for all $f\in\mathcal{B}^{\rho}(X)$ and $x\in X$,
    $\lim_{t\to 0}P_t f(x)=f(x)$,
  \item
    \label{enu:defgenfeller-bound}
    there exist a constant $C\in\mathbb{R}$ and $\varepsilon>0$ such
    that for all $t\in [0,\varepsilon]$,
    $\lVert P_t\rVert_{L(\mathcal{B}^{\rho}(X))}\le C $.
  \item
    \label{enu:defgenfeller-positivity}
    $P_t$ is positive for all $t\ge 0$, that is, for
    $f\in\mathcal{B}^{\rho}(X)$, $f\ge 0$, we have $P_t f\ge 0$.
  \end{enumerate}
\end{definition}

We obtain due to the Riesz representation property the following key
theorem:
\begin{theorem}
  \label{theorem:Ttstrongcont}
  Let $(P_t)_{t\ge 0}$ satisfy (i) to (iv) of Definition
  \ref{def:genFeller}.  Then, $(P_t)_{t\ge 0}$ is strongly continuous
  on $\mathcal{B}^{\rho}(X)$, that is,
  \begin{equation}
    \lim_{t\to 0}\lVert P_t f-f\rVert_{\rho}=0
    \quad\text{for all $f\in\mathcal{B}^{\rho}(X)$}.
  \end{equation}
\end{theorem}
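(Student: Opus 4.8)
The plan is to exploit the Riesz representation theorem for $\mathcal{B}^\rho(X)$ (Theorem~\ref{theorem:rieszrepresentation}) to reduce the strong continuity to the already-assumed pointwise convergence (F3), by a standard Banach--Steinhaus / equicontinuity argument adapted to the weighted setting. The key observation is that strong continuity on a dense subset, together with the uniform bound (F4), upgrades to strong continuity on all of $\mathcal{B}^\rho(X)$; since $\mathrm{C}_b(X)\subset \mathcal{B}^\rho(X)$ is dense by definition, it would suffice to understand the behaviour of $P_t f$ for $f$ in a suitable dense class, but the real content is to show that pointwise convergence $P_t f(x)\to f(x)$ forces norm convergence $\|P_t f - f\|_\rho\to 0$.

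First I would fix $f\in\mathcal{B}^\rho(X)$ and suppose, for contradiction, that $\|P_{t_n}f - f\|_\rho\ge \delta>0$ along some sequence $t_n\to 0+$. By definition of the norm $\|\cdot\|_\rho$, for each $n$ there is a point $x_n\in X$ with $\rho(x_n)^{-1}|P_{t_n}f(x_n) - f(x_n)|\ge \delta/2$. The map $g\mapsto \rho(x_n)^{-1}(P_{t_n}g(x_n) - g(x_n))$ is a continuous linear functional on $\mathcal{B}^\rho(X)$, of norm bounded by $C+1$ uniformly in $n$ thanks to (F4) and the fact that point-evaluations $g\mapsto \rho(x)^{-1}g(x)$ have norm at most $1$. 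Applying Theorem~\ref{theorem:rieszrepresentation}, I would represent each such functional by a finite signed Radon measure $\mu_n$ on $X$ with total mass (in the $\rho$-weighted sense) uniformly bounded, namely $\int_X \rho\,d|\mu_n|\le C+1$. Then I would use the decay property \eqref{eq:Bdecay}: given $\varepsilon>0$, choose $R$ so large that $\sup_{x\notin K_R}\rho(x)^{-1}|f(x)|<\varepsilon$, so that the contribution of $X\setminus K_R$ to $\mu_n(P_{t_n}f) = \rho(x_n)^{-1}(P_{t_n}f(x_n)-f(x_n))$ — after writing $P_{t_n}f$ in place of $f$ and using $P_{t_n}$-invariance estimates — is controlled uniformly; on the compact set $K_R$ the pointwise convergence (F3) plus a dominated-convergence / equicontinuity argument on the compact $K_R$ gives convergence to zero, contradicting the lower bound $\delta/2$.

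The cleanest route, which I would actually take, is the following: by the Riesz theorem the dual of $\mathcal{B}^\rho(X)$ consists of signed Radon measures $\mu$ with $\int \rho\,d|\mu|<\infty$; by (F4) the adjoint operators $P_t^*$ are uniformly bounded on this dual space for small $t$, and by (F3) together with dominated convergence (using that $|f(x)|\le \|f\|_\rho\,\rho(x)$ and $\rho$ is $\mu$-integrable) one gets $\int P_t f\,d\mu\to \int f\,d\mu$ for every such $\mu$, i.e.\ $P_t f\to f$ weakly. Strong continuity then follows from a general principle: a locally bounded semigroup that is weakly continuous at $0$ is strongly continuous at $0$ — but to avoid invoking this in the Banach-space-valued generality, I would instead argue directly that weak convergence $P_{t_n}f\rightharpoonup f$ together with the uniform norm bound and the extremal property stated after \eqref{eq:Bdecay} (every nonnegative-supremum element of $\mathcal{B}^\rho(X)$ attains its weighted supremum at some point of $X$) pins down the norm limit. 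Concretely, $\|P_{t_n}f - f\|_\rho = \sup_x \rho(x)^{-1}|P_{t_n}f(x)-f(x)|$; applying the attainment result to $g_n := |P_{t_n}f - f|$ (or to $\pm(P_{t_n}f-f)$) yields a maximizing point $z_n$, and evaluating the Riesz measure $\delta_{z_n}/\rho(z_n)$ against the weakly convergent net forces the supremum to zero.

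The main obstacle I anticipate is handling the \emph{non-compactness at infinity}: the maximizing points $z_n$ need not lie in a fixed compact set, so one cannot simply pass to a convergent subsequence and use continuity. This is precisely where the decay condition \eqref{eq:Bdecay} characterizing $\mathcal{B}^\rho(X)$ is essential — it guarantees that, outside a large ball $K_R$, the quantity $\rho(x)^{-1}|f(x)|$ is uniformly small, and a parallel estimate for $\rho(x)^{-1}|P_{t_n}f(x)|$ (obtained by applying the Riesz representation to the evaluation functionals and using (F4)) confines the relevant maximizing points to a fixed $K_R$ up to an $\varepsilon$-error. Once the problem is localized to a compact $K_R$, the pointwise convergence from (F3), upgraded to uniform convergence on $K_R$ via an Arzel\`a--Ascoli / equicontinuity argument (the family $\{P_{t_n}f|_{K_R}\}$ being relatively compact in $\mathrm{C}(K_R)$ with $f$ as only possible limit point), closes the argument. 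I would expect the write-up to spend most of its length on this localization-plus-equicontinuity step, with the Riesz representation and (F4) doing the structural work.
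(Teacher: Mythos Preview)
The paper does not actually prove this theorem: it refers to \cite{doetei:10} for the argument, indicating only that it is obtained ``due to the Riesz representation property''. Your ``cleanest route'' is exactly this: by Theorem~\ref{theorem:rieszrepresentation} every continuous functional on $\mathcal{B}^\rho(X)$ is integration against a signed Radon measure $\mu$ with $\int\rho\,d|\mu|<\infty$; since $|P_tf(x)|\le C\|f\|_\rho\,\rho(x)$ by (F4) and $P_tf(x)\to f(x)$ by (F3), dominated convergence gives $\ell(P_tf)\to\ell(f)$ for every $\ell$ in the dual, i.e.\ weak continuity at $0$; and a locally bounded, weakly continuous semigroup on a Banach space is strongly continuous (Engel--Nagel, Theorem~I.5.8). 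That is a complete and correct proof, and it matches what the paper signals.

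Your attempt to avoid quoting the weak-to-strong principle, however, has genuine gaps. The argument via maximizing points $z_n$ requires you to show that $\rho(x)^{-1}|P_{t_n}f(x)|$ is uniformly small outside some fixed $K_R$, independently of $n$; but (F4) only gives a uniform bound by $C\|f\|_\rho$, not uniform decay, and the decay condition \eqref{eq:Bdecay} for $P_{t_n}f\in\mathcal{B}^\rho(X)$ gives an $R$ that a priori depends on $n$. So the localization step, as written, does not go through. The subsequent Arzel\`a--Ascoli step is also unjustified: you have no equicontinuity for the family $\{P_{t_n}f|_{K_R}\}$, since nothing in (F1)--(F4) controls the modulus of continuity of $P_tf$ on compacta. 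I would recommend you simply invoke the classical weak-implies-strong result for semigroups and drop the direct argument; alternatively, if you want to make the argument self-contained, reproduce the standard proof of that fact (via Bochner integrals $\frac{1}{h}\int_0^h P_sf\,ds$ and the semigroup identity), rather than the maximizer route.
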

One can also establish a positive maximum principle in case that the
semigroup $ (P_t)_{t \geq 0} $ grows around $ 0 $ like
$ \exp(\omega t) $ for some $\omega \in \mathbb{R}$ with respect to
the operator norm on $ \mathcal{B}^{\rho}(X) $. Indeed, the following
theorem proved in \cite[Theorem 3.3]{doetei:10} is a reformulation of
the Lumer-Philips theorem for quasi-contractive semigroups using a
\emph{generalized positive maximum principle} which is formulated in
the sequel.
\begin{theorem}
  \label{theorem:Ttposmaxprinciple}
  Let $A$ be an operator on $\mathcal{B}^{\rho}(X)$ with domain $D$,
  and $\omega\in\mathbb{R}$. Then, $A$ is closable and its closure
  $\overline{A}$ generates a generalized Feller semigroup
  $(P_t)_{t\ge 0}$ with
  $\lVert P_t\rVert_{L(\mathcal{B}^{\rho}(X))}\le\exp(\omega t)$ for
  all $t\ge 0$ if and only if
  \begin{enumerate}
  \item $D$ is dense,
  \item $A-\omega_0$ has dense image for some $\omega_0>\omega$, and
  \item $A$ satisfies the generalized positive maximum principle, that
    is, for $f\in D$ with $(\rho^{-1}f)\vee 0\le \rho(z)^{-1}f(z)$ for
    some $z\in X$, $Af(z)\le \omega f(z)$.
  \end{enumerate}
\end{theorem}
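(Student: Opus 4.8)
The plan is to deduce Theorem~\ref{theorem:Ttposmaxprinciple} from the classical Lumer--Phillips / Hille--Yosida theory, transported to the Banach space $\mathcal{B}^\rho(X)$ via the Riesz representation theorem (Theorem~\ref{theorem:rieszrepresentation}) and the strong continuity criterion (Theorem~\ref{theorem:Ttstrongcont}). The key translation is that condition (iii), the generalized positive maximum principle, is precisely the functional-analytic condition that makes $A-\omega$ \emph{dissipative} on $\mathcal{B}^\rho(X)$ in the appropriate sense, which together with (i) density of $D$ and (ii) range density of $A-\omega_0$ yields, by Lumer--Phillips, that $\overline A$ generates a pseudo-contraction semigroup with $\|P_t\|\le e^{\omega t}$; positivity (F5) and the generalized Feller property then have to be read off from the resolvent.

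For the ``only if'' direction I would argue as follows. If $\overline A$ generates a generalized Feller semigroup with $\|P_t\|_{L(\mathcal{B}^\rho(X))}\le e^{\omega t}$, then $D=\dom(A)$ is a core, hence dense, giving (i); standard semigroup theory gives that $\lambda-\overline A$ is surjective for $\lambda>\omega$, and since $\overline A$ is the closure of $A$ one checks that already $A-\omega_0$ has dense range for any $\omega_0>\omega$, giving (ii). For (iii): take $f\in D$ and $z\in X$ with $(\rho^{-1}f)\vee 0\le \rho(z)^{-1}f(z)$, so in particular $\sup_X \rho^{-1}f$ is attained at $z$ (we may assume $f(z)>0$, the case $f(z)\le 0$ being immediate since then $f\le 0$ and positivity of $P_t$ combined with $P_t f(z)\to f(z)$ forces $Af(z)=\lim_t t^{-1}(P_tf(z)-f(z))\le 0\le \omega f(z)$). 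Using positivity and the norm bound, $\rho(z)^{-1}P_tf(z)\le \|P_tf\|_\rho\le e^{\omega t}\|f\|_\rho = e^{\omega t}\rho(z)^{-1}f(z)$, hence $P_tf(z)\le e^{\omega t}f(z)$; differentiating at $t=0+$ gives $Af(z)\le \omega f(z)$, which is (iii).

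For the ``if'' direction, I would first replace $A$ by $A-\omega$ so as to reduce to $\omega=0$, i.e. to proving that $A$ is closable with closure generating a contraction generalized Feller semigroup. The generalized positive maximum principle translates, via the second consequence of \cite[Theorem 2.8]{doetei:10} quoted in the excerpt (for $f$ with $\sup_X f>0$ there is a maximizing point $z$ of $\rho^{-1}f$), into the dissipativity estimate $\|(\lambda-A)f\|_\rho\ge \lambda\|f\|_\rho$ for $\lambda>0$ and $f\in D$: evaluate $(\lambda-A)f$ at a point $z$ maximizing $\rho^{-1}|f|$, use $Af(z)\le 0$ when $f(z)=\sup\rho^{-1}f\cdot\rho(z)>0$ (and the symmetric statement for $-f$), to get $\rho(z)^{-1}(\lambda f(z)-Af(z))\ge \lambda\rho(z)^{-1}f(z)=\lambda\|f\|_\rho$. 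Dissipativity gives closability; by hypothesis (ii) the range of $A-\omega_0$ (equivalently, after the shift, of $A'-\lambda_0$ for suitable $\lambda_0>0$) is dense, so $\overline A$ has range all of $\mathcal{B}^\rho(X)$ for that $\lambda_0$, and the Lumer--Phillips theorem yields that $\overline A$ generates a strongly continuous contraction semigroup $(P_t)_{t\ge0}$ on $\mathcal{B}^\rho(X)$. It remains to verify the five axioms of Definition~\ref{def:genFeller}: (F1), (F2) are automatic; (F4) is the contraction bound; (F3), pointwise convergence, follows from norm convergence since evaluation $f\mapsto f(x)$ is a continuous linear functional on $\mathcal{B}^\rho(X)$ by Riesz representation (Theorem~\ref{theorem:rieszrepresentation}); and positivity (F5) is obtained from the resolvent formula $P_t=\lim_{n}(I-\tfrac tn\overline A)^{-n}$ together with positivity of the resolvent $(\lambda-\overline A)^{-1}$, which in turn follows from the maximum principle: if $(\lambda-A)f=g\ge0$ but $f(z)=\rho(z)^{-1}f(z)\cdot\rho(z)<0$ at a negative minimum of $\rho^{-1}f$ then applying (iii) to $-f$ gives a contradiction with $\lambda f(z)-Af(z)=g(z)\ge0$, so $f\ge0$, and this passes to the closure.

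I expect the main obstacle to be the verification of positivity (F5) and the precise bookkeeping in passing between $A$ and its closure $\overline A$ in conditions (ii) and (iii): one must be careful that the maximum-principle inequality, initially only assumed on the core $D$, extends to $\dom(\overline A)$ in a way compatible with the existence-of-maximizer statement from \cite[Theorem 2.8]{doetei:10}, and that density of the range of $A-\omega_0$ for \emph{one} $\omega_0>\omega$ is genuinely enough (this is the standard ``one point of the resolvent set suffices'' argument once dissipativity is in hand). Since, as the excerpt states, this theorem is \cite[Theorem 3.3]{doetei:10}, the cleanest exposition is to cite that reference for the detailed proof and here only indicate the Lumer--Phillips mechanism and the role of the generalized positive maximum principle as described above.
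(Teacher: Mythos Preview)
Your proposal is essentially correct and matches the paper's approach: the paper does not give its own proof but simply states that this is \cite[Theorem~3.3]{doetei:10}, described as ``a reformulation of the Lumer--Phillips theorem for pseudo-contraction semigroups using a generalized positive maximum principle,'' which is exactly the mechanism you sketch. One small gap in your ``only if'' argument: the step $\|f\|_\rho = \rho(z)^{-1}f(z)$ is not justified, since the hypothesis only bounds the \emph{positive} part of $\rho^{-1}f$ by $\rho(z)^{-1}f(z)$ while $\|f\|_\rho$ involves $|f|$; to fix this, apply positivity to $f\le f^+:=f\vee 0\in\mathcal{B}^\rho(X)$, so that $P_tf(z)\le P_tf^+(z)\le \rho(z)\|P_tf^+\|_\rho\le \rho(z)e^{\omega t}\|f^+\|_\rho = e^{\omega t}f(z)$, and then differentiate.
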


\begin{remark}\label{rem:growthbound}
  Let $(P_t)_{t \geq 0}$ be a generalized Feller semigroup on
  $\mathcal{B}^{\rho}(X)$, then by abstract semigroup theory, see
  \cite[Proposition I.5.5]{engnag:00}, there exists $ M \geq 1 $ and
  $ \omega \in \mathbb{R} $ such that
  $ \| P_t \|_{L(\mathcal{B}^{\rho}(X))} \leq M \exp (\omega t ) $. By
  the Riesz representation as stated in Theorem
  \ref{theorem:rieszrepresentation}, $ P_t \rho (x) $ is always
  defined as integral of the measurable function $ \rho $ with respect
  to the measure representing the linear functional
  $ f \mapsto P_t f(x) $. The value $ P_t \rho (x) $ corresponds
  precisely to the operator norm of this linear functional by Equation
  \eqref{eq:rieszrepresentation-psiintbound} (note that the measure in
  Theorem \ref{theorem:rieszrepresentation} depends additionally on
  $x$). Hence by
  \[
    | P_t f (x) | \leq M \exp (\omega t ) \rho(x) {\lVert f
      \rVert}_\rho
  \]
  it follows that $ P_t \rho \leq M \exp(\omega t) \rho $.
\end{remark}
\begin{remark}\label{rem:quasicontractive}
  Let $ (P_t)_{t \geq 0} $ be a generalized Feller semigroup on
  $\mathcal{B}^{\rho}(X)$ of transport type, i.e.
  \begin{align}\label{eq:transport}
    P_t f (x) = f ( \psi_t(x))
  \end{align}
  for some continuous map $ \psi_t : X \to X $. Define now a new
  function
  \[
    \tilde \rho (x) := \sup_{t \geq 0} \, \exp(-\omega t) P_t \rho(x)
  \]
  for $ x \in X $. Notice that $ \tilde \rho $ is an admissible weight
  function, since
  \[
    \{ \tilde \rho \leq R \} = \cap_{t \geq 0} \, \{ P_t \rho \leq
    \exp(\omega t) R \} %\leq \{ \rho \leq R \}
  \]
  is compact by the continuity of $x \mapsto \psi_t(x)$ as an
  intersection of closed subsets of compacts. Additionally we have
  that
  \[
    \rho \leq \tilde \rho \leq M \rho
  \]
  by the growth bound as in Remark \ref{rem:growthbound} and therefore
  the norm on $ \mathcal{B}^\rho(X) $ is equivalent to
  \[ {\lVert f \rVert}_{\tilde \rho} = \sup_{x \in X}
    \frac{|f(x)|}{\tilde \rho(x)} \, .
  \]
  Furthermore,
  \[
    \lVert P_tf \rVert_{\tilde \rho} \leq \exp(\omega t) \lVert f
    \rVert_{\tilde \rho}
  \]
  holds for all $t\ge 0$ and $ f \in \mathcal{B}^\rho(X) $.  Indeed,
  this is a consequence of the following estimate
  \begin{equation}
    \begin{split}\label{eq:estimate}
      \lVert P_tf \rVert_{\tilde \rho} &=
      \sup_{x}\left|\frac{f(\psi_t(x))}{\sup_s \exp(-\omega s)
          \rho(\psi_s(x))}\right|\leq
      \sup_{x}\left|\frac{f(\psi_t(x))}{\sup_s \exp(-\omega (t+s))
          \rho(\psi_{t+s}(x))}\right|
      \\
      &\leq \exp(\omega t) \sup_{x}\left|\frac{f(\psi_t(x))}{\sup_s
          \exp(-\omega s) \rho(\psi_{s}(\psi_t(x)))}\right|\leq
      \exp(\omega t) \|f\|_{\tilde{\rho}}.
    \end{split}
  \end{equation}
  Hence,
  \[
    |P_tf(x)| \leq \exp(\omega t)\tilde{\rho}(x) \|f\|_{\tilde{\rho}},
  \]
  which implies
  \[
    P_t \tilde \rho \leq \exp(\omega t) \tilde \rho, \quad t \geq 0.
  \]
\end{remark}

The next theorem is a basic perturbation result in the spirit of
\cite[Corollary 7.2]{ethkur:86} for generalized Feller semigroups.

\begin{theorem}\label{th:perturbation}
  Let $ A $ be a generator of a generalized Feller semigroup
  $(P_t)_{t\ge 0}$ on $ \mathcal{B}^\rho(X) $ with
  $\lVert P_t\rVert_{L(\mathcal{B}^{\rho}(X))}\leq M\exp(\omega t)$
  for all $t\ge 0$ and some $ M \geq 1$, $ \omega \in \mathbb{R}
  $. Let $ B $ be a bounded generator of a generalized Feller
  semigroup on $ \mathcal{B}^\rho(X) $. Then $ A + B $ is a generator
  of a generalized Feller semigroup on $ \mathcal{B}^\rho(X) $ whose
  operator norm is bounded by
  $ M \exp(\omega t + M \| B \|_{L(\mathcal{B}^{\rho}(X))} t) $ for
  $ t \geq 0 $. If $ M = 1 $ and $B$ satisfies
  \[
    B \rho \leq \tilde \omega \rho \,
  \]
  for some real $ \tilde \omega $, then the operator norm of the
  semigroup generated by $A+B$ is bounded by
  $ \exp((\omega + \tilde \omega) t) $ for $ t \geq 0 $.
\end{theorem}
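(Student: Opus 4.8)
The plan is to mimic the classical bounded-perturbation argument for $C_0$-semigroups as in \cite[Chapter 3, Theorem 1.1]{paz:83}, but carried out on $\mathcal{B}^\rho(X)$ and with attention to positivity. First I would construct the candidate semigroup $(Q_t)_{t\ge 0}$ for $A+B$ by the usual Dyson--Phillips / Picard iteration: set $Q_t^{(0)} := P_t$ and
\[
  Q_t^{(n+1)} f := \int_0^t P_{t-s} B\, Q_s^{(n)} f \, ds ,
\]
the integral being a Bochner integral in $\mathcal{B}^\rho(X)$ (well defined since $s\mapsto P_{t-s}BQ_s^{(n)}f$ is continuous by strong continuity of $P$, which holds by Theorem~\ref{theorem:Ttstrongcont}, and $B$ is bounded). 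A straightforward induction using $\lVert P_t\rVert \le M e^{\omega t}$ gives the bound
\[
  \lVert Q_t^{(n)} f\rVert_\rho \le M e^{\omega t}\,\frac{(M\lVert B\rVert t)^n}{n!}\,\lVert f\rVert_\rho ,
\]
so $Q_t := \sum_{n\ge 0} Q_t^{(n)}$ converges in operator norm, is strongly continuous, and satisfies $\lVert Q_t\rVert_{L(\mathcal{B}^\rho(X))} \le M\exp(\omega t + M\lVert B\rVert t)$. The semigroup property $Q_{t+s}=Q_tQ_s$ and the fact that $A+B$ (with domain $\dom(A)$) is its generator follow exactly as in \cite{paz:83}: one checks that $Q$ solves the integral equation $Q_t = P_t + \int_0^t P_{t-s}BQ_s\,ds$, differentiates at $0$ to identify the generator, and uses that a bounded perturbation does not change the domain.

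The point that is genuinely particular to our setting — and which I expect to be the main obstacle — is \emph{positivity}: one must show $Q_t f\ge 0$ whenever $f\ge 0$, i.e.\ that $A+B$ generates a \emph{generalized Feller} semigroup and not merely a strongly continuous one. Here I would use the hypothesis that $B$ is itself the (bounded) generator of a generalized Feller semigroup. By Example~\ref{ex:bounded_generator}-type reasoning, or directly, a bounded generator $B$ of a positive semigroup has the property that $B + \lVert B\rVert_{L(\mathcal{B}^\rho(X))}\cdot I$ is a positive operator (this is the standard characterization: $e^{tB}\ge 0$ for all $t\ge 0$ iff $B+cI\ge 0$ for some $c\ge 0$). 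Writing $c := \lVert B\rVert_{L(\mathcal{B}^\rho(X))}$ and $\tilde B := B + cI \ge 0$, we have $A+B = (A - cI) + \tilde B$. Now $A-cI$ generates the rescaled semigroup $e^{-ct}P_t$, still positive, and $\tilde B\ge 0$ is bounded; so the Dyson--Phillips series for $(A-cI)+\tilde B$ has \emph{all} terms $Q_t^{(n)}$ positive (each is built from compositions and Bochner integrals of the positive operators $e^{-c(t-s)}P_{t-s}$, $\tilde B$, applied to $f\ge 0$), whence the sum $Q_t$ is positive. Thus $A+B$ generates a positive strongly continuous semigroup satisfying (i)--(iv) of Definition~\ref{def:genFeller} together with (v), i.e.\ a generalized Feller semigroup, with the claimed norm bound.

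Finally, for the last assertion, suppose $M=1$, so $\lVert P_t\rVert \le e^{\omega t}$. Then $A$ satisfies the hypotheses of Theorem~\ref{theorem:Ttposmaxprinciple}; in particular $A$ obeys the generalized positive maximum principle with constant $\omega$. Since $B$ is a bounded generator of a generalized Feller semigroup, $\tilde B = B + cI \ge 0$ is a positive bounded operator, and a positive bounded operator automatically satisfies the generalized positive maximum principle with constant $0$: if $(\rho^{-1}f)\vee 0 \le \rho(z)^{-1}f(z)$ for some $z$, then writing $g := \rho(z)^{-1}f(z)\rho - f \ge 0$ pointwise where... more precisely, since $f(x)\le \rho(z)^{-1}f(z)\,\rho(x)$ for all $x$ and $\tilde B$ is positive and $\tilde B\rho$ makes sense whenever $\rho\in\mathcal{B}^\rho(X)$ — here one uses instead the pointwise estimate $\tilde Bf(z) = \int(\ldots)$ type representation available because $B$ is given by a kernel as in Example~\ref{ex:bounded_generator}, giving $Bf(z)\le \omega' f(z)$ with $\omega' = 0$ after the shift. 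Granting this, $A+B = (A-cI)+\tilde B$ satisfies the generalized positive maximum principle with constant $(\omega - c) + c = \omega$... which is not what is claimed; rather, keeping $B$ un-shifted, $A$ contributes $\le \omega f(z)$ and $B$ contributes $\le \lVert B\rVert f(z)$ at a generalized maximum point $z$ (using positivity of $B+\lVert B\rVert I$ and $f(z)\ge 0$), so $(A+B)f(z)\le (\omega + \lVert B\rVert_{L(\mathcal{B}^\rho(X))})f(z)$. Combined with the norm bound $\lVert Q_t\rVert\le e^{(\omega+\lVert B\rVert)t}$ just proved and the density of $\dom(A)$ and of the range of $A+B-\omega_0$ for $\omega_0$ large (both inherited from $A$ via the bounded perturbation), Theorem~\ref{theorem:Ttposmaxprinciple} applies and yields the generalized maximum principle with constant $\omega + \lVert B\rVert_{L(\mathcal{B}^\rho(X))}$, completing the proof.
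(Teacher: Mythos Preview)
Your approach is correct but genuinely different from the paper's. The paper first renorms $\mathcal{B}^\rho(X)$ with an equivalent norm $|\cdot|$ (satisfying $\|f\|_\rho\le|f|\le M\|f\|_\rho$) in which $(P_t)$ becomes pseudocontractive, and then invokes the Chernoff/Trotter product formula
\[
  \widetilde P_t f=\lim_{n\to\infty}\bigl(P_{t/n}\,e^{Bt/n}\bigr)^n f .
\]
Positivity of $\widetilde P_t$ is then immediate, since each factor $P_{t/n}$ and $e^{Bt/n}$ is positive by hypothesis; the growth bound $|\widetilde P_t|\le e^{(\omega+|B|)t}$ is read off in the equivalent norm and translated back to $\|\widetilde P_t\|_{L(\mathcal{B}^\rho)}\le M e^{(\omega+M\|B\|)t}$. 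Your Dyson--Phillips construction yields the same bound more directly (indeed the paper uses exactly this iteration in Remark~\ref{rem:mild_formulation} for a refined estimate), and your shift trick $A+B=(A-cI)+(B+cI)$ with $B+cI\ge 0$ is a valid alternative route to positivity. The trade-off is that the Trotter formula gives positivity for free from the hypothesis that $B$ generates a positive semigroup, whereas your argument requires the Banach-lattice lemma ``$e^{tB}\ge 0$ for all $t\ge 0$ and $B$ bounded $\Rightarrow B+\|B\|I\ge 0$'', which is standard but not entirely trivial and which you leave as an assertion.

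For the final $M=1$ clause you are working too hard. Once you have shown that $\widetilde P$ is a generalized Feller semigroup with $\|\widetilde P_t\|_{L(\mathcal{B}^\rho)}\le e^{(\omega+\|B\|)t}$, the ``only if'' direction of Theorem~\ref{theorem:Ttposmaxprinciple} immediately gives that its generator $A+B$ satisfies the generalized positive maximum principle with constant $\omega+\|B\|_{L(\mathcal{B}^\rho(X))}$; there is no need to verify the inequality $(A+B)f(z)\le(\omega+\|B\|)f(z)$ by hand at a generalized maximum point. Your attempted direct verification meanders (you yourself note the constants do not line up on the first try) and relies on a kernel representation of $B$ that is not assumed.
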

\begin{proof}
  The result is a consequence of \cite[Theorem 1.3, p.158]{engnag:00}.
  The positivity is obtained from Chernoff's product formula. If
  $ B \rho \leq \tilde \omega \rho $, then for $ f \geq 0 $
  \[
    \exp( B t ) f \leq \exp( \tilde \omega t) {\|f \|}_{\rho} \rho \,
    ,
  \]
  which yields
  $\| B \|_{L(\mathcal{B}^{\rho}(X))} \leq \tilde{\omega}$.
\end{proof}

We now give a version of the Kolmogorov extension theorem, which has
not been shown so far. Notice that standard versions of the extension
theorem, i.e.~for Borel spaces or in case of inner-regular measures on
topological spaces, see \cite{alibor:06}, do not apply in this
case. One can, however, apply the Daniell-Kolmogorov theorem.

\begin{theorem}\label{th:kolmogorov_extension}
  Let $ (P_t)_{t \geq 0} $ be a generalized Feller semigroup with
  $ P_t 1 = 1 $ for $ t \geq 0 $. Then there exists a filtered
  measurable space $ (\Omega,(\mathcal{F}_t)_{t \geq 0}) $ with right
  continuous filtration, and an adapted family of random variables
  $ {(\lambda_t)}_{t \geq 0} $ such that for any initial value
  $ \lambda_0 \in X $ there exists a probability measure
  $ \mathbb{P}^{\lambda_0} $ with
  \[
    \mathbb{E}_{\mathbb{P}^{\lambda_0}}[f(\lambda_t)] = P_t
    f(\lambda_0)
  \]
  for $ t \geq 0 $ and every $ f \in \mathcal{B}^\rho(X) $. The Markov
  property holds true, i.e.
  \[
    \mathbb{E}_{\mathbb{P}^{\lambda_0}}[f(\lambda_t) \, | \;
    \mathcal{F}_s] = P_{t-s} f(\lambda_s)
  \]
  almost surely with respect to $ \mathbb{P}^{\lambda_0} $.
\end{theorem}
\begin{remark}
  For simplicity of notation we shall usually only write $\mathbb{E}$
  or $\mathbb{E}_{\lambda_0}$ for expectations with respect to the
  above probability measure $ \mathbb{P}^{\lambda_0}$.  We call the
  process $ (\lambda_t)_{t \geq 0} $ a generalized Feller process with
  initial value $ \lambda_0 $ with respect to this measure.
\end{remark}
\begin{proof}
  Every weighted space is $ \sigma $-compact and therefore the set of
  compact subsets of $ \{ \rho \leq R \} $ for $ R \geq 0 $ form a
  compact class in $ X $. Consider the kernels
  $ \mu_t(\lambda_0,\cdot) $ representing
  $ f \mapsto P_t f(\lambda_0) $ and fix $ \lambda_0 \in X $.  Then by
  standard constructions we obtain a consistent family of probability
  measures $ \mu^J $ on $ X^J $ for all finite subsets
  $ J \subset I $, i.e.~for the canonical projections
  $ \pi^{J_2 \to J_1}: X^{J_2} \to X^{J_1} $ the push forward of
  $ \mu^{J_2} $ is $ \mu^{J_1} $, for all finite sets
  $ J_1 \subset J_2 \subset I $. The Daniell-Kolmogorov theorem, see
  \cite[Theorem 15.23]{alibor:06}, then yields a probability measure
  on the $ X^{\mathbb{R}_{+}} $ on the $\sigma$ algebra generated by
  the canonical projections, such that the canonical projections
  $ \lambda_t : X \to \mathbb{R} $ have law $ \mu_t(\lambda_0,.)
  $. The rest follows by construction as in the Feller case. Right
  continuity of the filtration is enforced by taking the right
  continuous enlargement.
\end{proof}

We end this subsection with a statement on path properties: up to
explosion generalized Feller processes have -- with respect to
countably many test functions -- c\`adl\`ag or c\`agl\`ad
trajectories.

\begin{theorem}\label{th:path_properties}
  Let $ (P_t)_{t\geq 0} $ be a generalized Feller semigroup and let
  $ (\lambda_t)_{t \geq 0} $ be a generalized Feller process on a
  filtered probability space. Then for every countable family
  $ {(f_n)}_{n \geq 0} $ of functions in $ \mathcal{B}^\rho(X) $ we
  can choose a version of the processes
  $ {\left( \frac{f_n(\lambda_t)}{\rho(\lambda_t)} \right)}_{t \geq 0}
  $, such that the trajectories are c\`agl\`ad for all $ n \geq 0 $.

  If additionally $ P_t \rho \leq \exp(\omega t) \rho $ holds true,
  then $ (\exp(- \omega t) \rho(\lambda_t))_{t \geq 0} $ is a
  super-martingale and can be chosen to have c\`agl\`ad
  trajectories. In this case we obtain that the processes
  $ {\big( f_n(\lambda_t) \big)}_{t \geq 0} $ can be chosen to have
  c\`agl\`ad trajectories.
\end{theorem}

\begin{remark}
  \begin{enumerate}
  \item In the general case, when
    $ P_t \rho \leq M \exp(\omega t) \rho $ for $M >1$, we obtain for
    $ {\big( f_n(\lambda_t) \big)}_{t \geq 0} $ only c\`ag
    trajectories. To see this, consider the measurable set of sample
    events $ \{ \sup_{0 \leq t \leq 1} \rho(\lambda_t) \leq R \} $. On
    this set we can construct a c\`agl\`ad version of the processes
    $ {\left( \frac{f_n(\lambda_t)}{\rho(\lambda_t)} \right)}_{t \leq
      1} $ and $ \left({\frac{1}{\rho(\lambda_t)}}\right)_{t \leq 1} $
    and in turn also of $ {\big( f_n(\lambda_t) \big)}_{t \geq
      0}$. The limit $ R \to \infty $, however, only leads to a c\`ag
    version since we cannot control the right limits.
  
  \item Notice that we do not speak -- in this general context --
    about path properties of $ {(\lambda_t)}_{t \geq 0} $ itself. In
    case of separability of $ \mathcal{B}^\rho(X) $ we can actually
    find a c\`ag version of $ \lambda $ itself by choosing as point
    separating family $ f_n $ a countable dense subset of
    $ \mathcal{B}^\rho(X) $.
  \end{enumerate}

\end{remark}

\begin{proof}
  For every generalized Feller semigroup the process
  \[
    Y^\alpha_t := \exp(-\alpha t) R(\alpha)f(\lambda_t)
  \]
  for $ t \geq 0 $ is actually a non-negative supermartingale for
  $ f \geq 0 $ in $ \mathcal{B}^\rho(X) $. Here
  \[
    R(\alpha):= \int_0^\infty \exp(-\alpha s) P_s ds
  \]
  denotes the resolvent of $ (P_t)_{t \geq 0} $, defined for large
  enough $ \alpha $, say $\alpha > \omega$ for some $\omega \geq
  0$. Indeed by the Markov property
  \begin{align*}
    \mathbb{E}[Y^\alpha_{t} | \mathcal{F}_s] & = \mathbb{E} \big[ \exp(-\alpha t) \int_0^\infty \exp(-\alpha u) P_u f (\lambda_t)du | \mathcal{F}_s \big] \\
                                             & = \exp(-\alpha t) \int_0^\infty \exp(-\alpha u) P_{u+t-s} f (\lambda_s)du \\
                                             & = \exp(-\alpha s) \int_{t-s}^\infty \exp(-\alpha u) P_{u} f (\lambda_s)du \\
                                             & \leq Y^\alpha_{s} \, ,
  \end{align*}
  for $ 0 \leq s \leq t$ with respect to the given right continuous
  filtration. Hence $(Y^{\alpha}_t)_{t \geq 0}$ is a supermartingale
  and $t \mapsto \mathbb{E}[Y^\alpha_t] $ is continuous. This implies
  together with the right continuity of the filtration the existence
  of a c\`agl\`ad version.  Consider now the countable set of
  functions
  \[
    \mathcal{H} := \{ \alpha R(\alpha) f_n | \text{ for } n \in
    \mathbb{N}, \, \alpha > \omega \} \, .
  \]
  Since each $(Y^{\alpha}_t)_{t \geq0}$ has a c\`agl\`ad version, this
  translates also to each of the processes
  $ {(h(\lambda_t))}_{t \geq 0} $ for $ h \in \mathcal{H}$.  Moreover,
  as $ {\| \alpha R(\alpha) f - f \|}_\rho \to 0 $ for
  $ \alpha \to \infty $, also
  $ {\big( \frac{f_n(\lambda_t)}{\rho(\lambda_t)}\big) }_{t \geq 0} $
  has c\`agl\`ad trajectories. The second statement follows directly.
\end{proof}

The following example shows that the c\`agl\`ad property can fail when
we have jump process with a singular kernel even though the general
Feller property holds true.

\begin{example}
  Let $X =\mathcal{M}(\mathbb{R}_{+} \cup \{+\infty\})$ be equipped
  with the total variation norm as normed space, or with the
  weak-$*$-topology. Take a standard Poisson process $ N $ with
  intensity $1$ and a measure $ \nu $ such that
  \[
    K(t) := \int_0^\infty \exp(-x t) \nu(dx) < \infty
  \]
  for $ t > 0 $, but with possibly infinite total mass. Consider now
  the (formal) SPDE
  \[
    d \lambda_t(dx) = - x \lambda_t(dx) dt + \nu dN_t \, ,
  \]
  whose explicit solution is given by variation of constants
  \[
    \lambda_t(dx) = \exp(-xt) \lambda_0(dx) + \int_0^t \exp(-x(t-s))
    \nu(dx) \, dN_s
  \]
  understood in the weak-$*$-topology, for $ t \geq 0 $. Take
  $ \rho(\lambda) = 1 + \int_0^\infty \lambda (dx) $ the shifted total
  variation norm, which makes $X$ a weighted space. Then
  \[
    P_t f(\lambda_0) = \mathbb{E}\big [ f(\lambda_t) \big]
  \]
  actually defines a generalized Feller semigroup if, e.g.,
  $ \int_0^t K(s) ds \leq M \exp(\omega t) $. Indeed, we have
  \[
    P_t \rho(\lambda_0) = 1 +\int_0^\infty \exp(-xt) \lambda_0(dx) +
    \int_0^t K(t-s) ds \leq M \exp( \omega t ) \rho(\lambda_0)
  \]
  for some (possibly new) constant $ M \geq 1 $ and some real
  $ \omega $. This together with point-wise continuity on cylindrical
  Fourier basis elements yields the generalized Feller property. The
  process
  \begin{align*}
    \rho(\lambda_t) &= 1+ \int_0^\infty \exp(-xt) \lambda_0(dx) +
                      \int_0^t K(t-s) dN_s \\
                    &=1+ \int_0^\infty \exp(-xt) \lambda_0(dx) + \sum_{{\tau_n} <t} K(t-\tau_n),
  \end{align*}
  where $\tau_n$ denotes the jump times of the Poisson process,
  however does only allow for a c\`ag version, if $K$ is singular.
\end{example}

\begin{remark}
  Theorem \ref{th:path_properties} allows to formulate a martingale
  problem in this general context if
  $P_t\rho \leq \exp(\omega t) \rho$. Indeed, let $A$ be the generator
  of a generalized Feller semigroup. Then, for every
  $ f \in \operatorname{dom}(A) $ we can choose versions for
  $ (f(\lambda_t))_{t \geq 0} $ and $ (Af(\lambda_t))_{t \geq 0} $
  which are c\`agl\`ad. Hence $ \int_0^t A f(\lambda_s) ds$ is in
  particular well-defined and the Markov property implies that
  \[ \left( f(\lambda_t) - \int_0^t A f(\lambda_s) ds \right)_{t \geq
      0}
  \]
  is actually a (c\`agl\`ad) martingale. This will be investigated
  further in a subsequent article.
\end{remark}

\subsection{Dual spaces of Banach spaces} \label{subsec:dual}

The most important playground for our theory will be closed subsets of
duals of Banach spaces, where the weak-$*$-topology appears to be
$ \sigma $-compact due to the Banach-Alaoglu theorem. Assume that
$ \mathcal{E} \subset Y^*$ is a closed subset of the dual space $Y^*$
of some Banach space $Y$ where $Y^{\ast}$ is equipped with its
weak-$*$-topology. Consider a lower semicontinuous function
$\rho\colon \mathcal{E} \to(0,\infty)$ and denote by
$(\mathcal{E},\rho)$ the corresponding weighted space. We have the
following approximation result (see \cite[Theorem 4.2]{doetei:10}) for
functions in $\mathcal{B}^{\rho}(\mathcal{E})$ by cylindrical
functions. Set
\begin{alignat}{2}
  \mathcal{Z}_N := \bigl\{
  g(\langle\cdot,y_1\rangle,\dots,\langle\cdot,y_N\rangle)\colon
  &\text{$g\in\mathrm{C}_b^{\infty}(\mathbb{R}^N)$} \notag
  \\
  &\text{and $y_j\in Y$, $j=1,\dots,N$} \bigr\},
\end{alignat}
where $\langle \cdot, \cdot \rangle$ denotes the pairing between $Y^*$
and $Y$. We denote by
$\mathcal{Z}:=\bigcup_{N\in\mathbb{N}}\mathcal{Z}_N$ the set of
bounded smooth continuous cylinder functions on $\mathcal{E}$. We can
prove the following theorem beyond any separability assumptions on
$ Y $.
\begin{theorem}
  \label{theorem:boundedweakcontapprox}
  The closure of $\mathcal{Z}$ in $\mathrm{B}^\rho(\mathcal{E})$
  coincides with $\mathcal{B}^\rho(\mathcal{E})$, whose elements
  appear to precisely the functions
  $f\in\mathcal{B}^{\rho}(\mathcal{E})$ which satisfy
  \eqref{eq:Bdecay} and that $f|_{K_R}$ is weak-$*$-continuous for any
  $R>0$.
\end{theorem}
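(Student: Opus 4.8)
The plan is to split the statement into two parts: the identification of $\mathcal{B}^\rho(\mathcal{E})$ with the functions that are weak-$*$-continuous on every $K_R$ and satisfy \eqref{eq:Bdecay}, and the density of the smooth cylinder functions $\mathcal{Z}$. The first part is immediate from \cite[Theorem 2.7]{doetei:10}: since $\mathcal{E}$ carries the weak-$*$-topology and each $K_R$ is weak-$*$-compact, the subspace topology on $K_R$ is the weak-$*$-topology, so ``$f|_{K_R}\in\mathrm{C}(K_R)$'' means exactly that $f|_{K_R}$ is weak-$*$-continuous; the cited theorem then asserts precisely that these two conditions characterise membership in $\mathcal{B}^\rho(\mathcal{E})$. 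It remains to prove that the closure of $\mathcal{Z}$ in $\mathrm{B}^\rho(\mathcal{E})$ equals $\mathcal{B}^\rho(\mathcal{E})$.

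The inclusion $\overline{\mathcal{Z}}\subseteq\mathcal{B}^\rho(\mathcal{E})$ is easy: every $g(\langle\cdot,y_1\rangle,\dots,\langle\cdot,y_N\rangle)$ with $g\in\mathrm{C}_b^\infty(\mathbb{R}^N)$ and $y_j\in Y$ is bounded and weak-$*$-continuous on $\mathcal{E}$, hence lies in $\mathrm{C}_b(\mathcal{E})\subseteq\mathcal{B}^\rho(\mathcal{E})$, and $\mathcal{B}^\rho(\mathcal{E})$ is $\|\cdot\|_\rho$-closed in $\mathrm{B}^\rho(\mathcal{E})$.

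For the reverse inclusion I would argue directly, in the spirit of \cite[Theorem 4.2]{doetei:10} but using a $\mathrm{C}_c^\infty$-cutoff so that no separability of $Y$ is needed. Fix $f\in\mathcal{B}^\rho(\mathcal{E})$ and $\varepsilon>0$. \emph{Step 1 (reduce to a bounded function).} By \eqref{eq:Bdecay} pick $R_0$ with $\rho^{-1}|f|<\varepsilon/2$ on $\mathcal{E}\setminus K_{R_0}$; since $|f|\le\|f\|_\rho R_0$ on $K_{R_0}$, for $n>\|f\|_\rho R_0$ the truncation $f_1:=(f\wedge n)\vee(-n)$ satisfies $\{|f|>n\}\subseteq\mathcal{E}\setminus K_{R_0}$ and hence $\|f-f_1\|_\rho\le\varepsilon/2$; moreover $f_1$ is bounded by $n$, weak-$*$-continuous on each $K_R$, and still satisfies \eqref{eq:Bdecay}, so $f_1\in\mathcal{B}^\rho(\mathcal{E})$ by the characterisation just proved. \emph{Step 2 (fix the scale and apply Stone--Weierstrass).} With $n$ now fixed, choose $R$ so large that $(2n+1)/R<\varepsilon/4$; then $\rho^{-1}|f_1|$ is small on $\mathcal{E}\setminus K_R$ because $\rho\ge R$ there. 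On the weak-$*$-compact Hausdorff space $K_R$ the unital algebra generated by the restrictions $\langle\cdot,y\rangle|_{K_R}$, $y\in Y$, separates points (two distinct elements of $Y^*$ differ on some $y\in Y$), so Stone--Weierstrass yields a polynomial $p$ and $y_1,\dots,y_N\in Y$ with $\sup_{K_R}|f_1-\phi|<c\,\varepsilon/8$, where $\phi:=p(\langle\cdot,y_1\rangle,\dots,\langle\cdot,y_N\rangle)$ and $c:=\inf_{\mathcal{E}}\rho>0$. \emph{Step 3 (localise).} Writing $\Lambda:=(\langle\cdot,y_1\rangle,\dots,\langle\cdot,y_N\rangle)$, the set $\Lambda(K_R)$ is compact in $\mathbb{R}^N$; choose $\chi\in\mathrm{C}_c^\infty(\mathbb{R}^N)$ equal to $1$ on a neighbourhood of $\Lambda(K_R)$ on which $|p|\le n+1$, and set $\tilde\phi:=(\chi p)(\langle\cdot,y_1\rangle,\dots,\langle\cdot,y_N\rangle)\in\mathcal{Z}$. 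Then $\tilde\phi=\phi$ on $K_R$ and $\|\tilde\phi\|_\infty\le n+1$, so $\rho^{-1}|f_1-\tilde\phi|\le c^{-1}\sup_{K_R}|f_1-\phi|<\varepsilon/8$ on $K_R$, while on $\mathcal{E}\setminus K_R$ one has $\rho^{-1}|f_1-\tilde\phi|\le (2n+1)/R<\varepsilon/4$. Combining with Step 1 gives $\|f-\tilde\phi\|_\rho<\varepsilon$, hence $f\in\overline{\mathcal{Z}}$.

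The main obstacle is the global control of the approximant on $\mathcal{E}\setminus K_R$: there $f$ is only small after multiplication by $\rho^{-1}$, whereas a naive Stone--Weierstrass approximant of $f|_{K_R}$ is a polynomial cylinder function typically unbounded on $\mathcal{E}$, and a truncation of it need not have bounded derivatives. This is precisely why the proof first passes to a bounded function (to fix a scale $n$ independent of the later choice of $R$), then chooses $R$ large relative to $n$, and only afterwards invokes Stone--Weierstrass and multiplies by a $\mathrm{C}_c^\infty$ cutoff in the finitely many relevant coordinates, which keeps the approximant in $\mathcal{Z}$ and uniformly bounded.
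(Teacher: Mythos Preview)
Your proof is correct and follows the paper's Stone--Weierstrass approach; the paper's two-sentence argument simply notes that $\mathcal{Z}|_{K_R}$ is itself a point-separating subalgebra of $C(K_R)$ (it contains constants and is closed under products), applies Stone--Weierstrass there, and invokes \cite[Theorem~2.7]{doetei:10}. Your truncation and $C_c^\infty$-cutoff steps make explicit the passage from density on each $K_R$ to density in $\mathcal{B}^\rho(\mathcal{E})$, which the paper leaves to the reader.
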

\begin{proof}
  Since $ \mathcal{Z}|_{\{\rho \leq R\}} $ is a point separating
  algebra we can apply the Stone Weierstrass theorem on the compact
  sets $ {\{\rho \leq R\}} $ to obtain density of the restrictions in
  $ C_b(\{\rho \leq R\}) $ for any $ R \geq 0 $. Then we can apply
  \cite[Theorem 2.7]{doetei:10}.
\end{proof}

\begin{remark} 
  Of course we can consider also subsets of general cylindrical
  functions to serve the same purpose (we just need a
  Stone-Weierstrass theorem to be applicable), i.e.~the subset should
  be point separating and an algebra. This will play an important role
  in the case of affine processes where we can consider the linear
  span of all products of Fourier basis elements
  $ \exp(\langle \cdot, y \rangle) $ for $ y \in Y $.
\end{remark}

In the following we will give a theorem telling when the semigroup of
a Markov process is actually a generalized Feller semigroup. For the
dense subset appearing in this theorem we take in practice the set of
cylindrical function $\mathcal{Z}$ introduced above. For its
formulation we need the following assumptions.

\begin{assumption}\label{ass:generic}
  Let $(\lambda_t)_{t\ge 0}$ denote a time homogeneous Markov process
  on some stochastic basis
  $(\Omega,\mathcal{F}, (\mathcal{F}_t)_{t\ge 0},
  \mathbb{P}^{\lambda_0})$ with values in $\mathcal{E}$.
  
  Then we assume that
  \begin{enumerate}
  \item there are constants $C$ and $\varepsilon>0$ such that
    \begin{equation}
      \label{eq:markovpsibound}
      \mathbb{E}_{\lambda_0}[\rho(\lambda_t)]\le C\rho(\lambda_0)
      \quad\text{for all $\lambda_0\in \mathcal{E}$ and $t\in[0,\varepsilon]$};
    \end{equation}
  \item
    \begin{equation} \label{eq:contint} \lim_{t\to 0}
      \mathbb{E}_{\lambda_0}[f(\lambda_t))] = f(\lambda_0) \quad
      \text{for any $f\in\mathcal{B}^{\rho}(\mathcal{E})$ and
        $\lambda_0\in \mathcal{E}$};
    \end{equation}
  \item for all $f$ in a dense subset of
    $ \mathcal{B}^\rho(\mathcal{E}) $, the map
    $ \lambda_0 \mapsto \mathbb{E}_{\lambda_0}[f(\lambda_t)] $ lies in
    $ \mathcal{B}^\rho(\mathcal{E}) $.
  \end{enumerate}
\end{assumption}

\begin{remark}
  Of course inequality \eqref{eq:markovpsibound} implies that
  $ \lvert\mathbb{E}_{\lambda_0}[f(\lambda_t)]\rvert \leq C
  \rho(\lambda_0) $ for all $ f \in \mathcal{B}^{\rho}(\mathcal{E}) $,
  $ \lambda_0 \in \mathcal{E} $ and $ t \in [0,\varepsilon]$.
\end{remark}

\begin{theorem}
  \label{theorem:strongcontprocess}
  Suppose Assumptions \ref{ass:generic} hold true. Then
  $P_t f(\lambda_0):=\mathbb{E}_{\lambda_0}[f(\lambda_t)]$ satisfies
  the generalized Feller property and is therefore a strongly
  continuous semigroup on $\mathcal{B}^\rho(\mathcal{E})$.
\end{theorem}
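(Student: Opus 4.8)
The plan is to verify directly that $(P_t)_{t\ge 0}$ satisfies conditions \ref{enu:defgenfeller-0id}--\ref{enu:defgenfeller-positivity} of Definition \ref{def:genFeller}; once (i)--(iv) are in place, strong continuity is \emph{not} something we have to prove by hand but follows immediately from Theorem \ref{theorem:Ttstrongcont}. The only genuinely non-trivial point is that $P_t$ is a well-defined \emph{bounded} operator from $\mathcal{B}^\rho(\mathcal{E})$ to itself for every $t\ge 0$; Assumption \ref{ass:generic}(iii) gives this only on a dense subset $\mathcal{D}$, so the work is in promoting it to all of $\mathcal{B}^\rho(\mathcal{E})$.

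The starting observation is a pointwise estimate valid for every $f\in\mathcal{B}^\rho(\mathcal{E})$: using $|f(\lambda_t)|\le\|f\|_\rho\,\rho(\lambda_t)$ and the tower property together with Assumption \ref{ass:generic}(i), one gets $\mathbb{E}_{\lambda_0}[\rho(\lambda_t)]\le C^{\lceil t/\varepsilon\rceil}\rho(\lambda_0)$ for all $t\ge0$, hence
\[
  |P_t f(\lambda_0)| \;\le\; \mathbb{E}_{\lambda_0}[|f(\lambda_t)|] \;\le\; \|f\|_\rho\,\mathbb{E}_{\lambda_0}[\rho(\lambda_t)] \;\le\; C^{\lceil t/\varepsilon\rceil}\,\|f\|_\rho\,\rho(\lambda_0),
\]
and in particular $|P_t f(\lambda_0)|\le C\|f\|_\rho\,\rho(\lambda_0)$ for $t\in[0,\varepsilon]$. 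Thus, as soon as we know that $P_tf$ is weak-$*$-continuous on each $K_R$ and decays in the sense of \eqref{eq:Bdecay} (i.e. $P_tf\in\mathcal{B}^\rho(\mathcal{E})$), this estimate yields $\|P_t f\|_\rho\le C\|f\|_\rho$ for $t\le\varepsilon$, which is exactly \ref{enu:defgenfeller-bound}. By Assumption \ref{ass:generic}(iii), $P_t f\in\mathcal{B}^\rho(\mathcal{E})$ for $f\in\mathcal{D}$ and all $t\ge0$; together with the bound above this shows $P_t|_{\mathcal{D}}$ is bounded in the $\|\cdot\|_\rho$-norm, so it extends uniquely to a bounded operator $\widehat P_t$ on $\mathcal{B}^\rho(\mathcal{E})$.

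The crux is to identify $\widehat P_t$ with the concrete expectation operator on \emph{all} of $\mathcal{B}^\rho(\mathcal{E})$. Given $f\in\mathcal{B}^\rho(\mathcal{E})$, choose $f_n\in\mathcal{D}$ with $\|f_n-f\|_\rho\to 0$. On one hand $\widehat P_t f_n\to\widehat P_t f$ in $\mathcal{B}^\rho(\mathcal{E})$, hence pointwise (since $\|\cdot\|_\rho$-convergence dominates pointwise convergence). On the other hand, for each fixed $\lambda_0$,
\[
  \big| \mathbb{E}_{\lambda_0}[f_n(\lambda_t)] - \mathbb{E}_{\lambda_0}[f(\lambda_t)] \big| \;\le\; \|f_n-f\|_\rho\,\mathbb{E}_{\lambda_0}[\rho(\lambda_t)] \;\longrightarrow\; 0,
\]
because $\mathbb{E}_{\lambda_0}[\rho(\lambda_t)]<\infty$. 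Comparing the two limits gives $\widehat P_t f(\lambda_0)=\mathbb{E}_{\lambda_0}[f(\lambda_t)]=P_tf(\lambda_0)$; in particular $P_tf\in\mathcal{B}^\rho(\mathcal{E})$ for every $f$, and \ref{enu:defgenfeller-bound} holds as stated. I expect this identification step to be the main obstacle: it is what actually delivers membership in $\mathcal{B}^\rho(\mathcal{E})$ for non-cylindrical $f$, and it hinges precisely on the integrability bound of Assumption \ref{ass:generic}(i), which furnishes the dominating function for the passage to the limit under the expectation.

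The remaining axioms are then routine. Property \ref{enu:defgenfeller-0id} holds since $\lambda_0$ starts at $\lambda_0$. For the semigroup law, the Markov (tower) property gives, for $f\in\mathcal{D}$,
\[
  P_{t+s}f(\lambda_0) = \mathbb{E}_{\lambda_0}\big[ \mathbb{E}_{\lambda_0}[f(\lambda_{t+s})\mid\mathcal{F}_t] \big] = \mathbb{E}_{\lambda_0}\big[ (P_s f)(\lambda_t) \big] = P_t(P_s f)(\lambda_0),
\]
where the right-hand side makes sense because $P_sf\in\mathcal{B}^\rho(\mathcal{E})$ by the previous paragraph; this extends from $\mathcal{D}$ to all of $\mathcal{B}^\rho(\mathcal{E})$ by density and boundedness of $P_t,P_s,P_{t+s}$. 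Property \ref{enu:defgenfeller-pwconv} is exactly Assumption \ref{ass:generic}(ii), and \ref{enu:defgenfeller-positivity} is immediate since $f\ge0$ forces $\mathbb{E}_{\lambda_0}[f(\lambda_t)]\ge0$. Having verified (i)--(iv) of Definition \ref{def:genFeller}, Theorem \ref{theorem:Ttstrongcont} applies and yields that $(P_t)_{t\ge0}$ is a strongly continuous semigroup on $\mathcal{B}^\rho(\mathcal{E})$, which is the assertion.
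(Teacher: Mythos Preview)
Your argument is correct and is precisely the kind of verification the paper alludes to: the paper's own proof consists of the single sentence ``This follows from the arguments of \cite[Section 5]{doetei:10}'', and what you have written is essentially a self-contained reconstruction of those arguments---bound $P_t$ on the dense set via \eqref{eq:markovpsibound}, extend by density, identify the extension with the expectation operator using the integrability of $\rho(\lambda_t)$, and then read off axioms \ref{enu:defgenfeller-0id}--\ref{enu:defgenfeller-positivity} so that Theorem~\ref{theorem:Ttstrongcont} applies. There is nothing to correct.
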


\begin{proof}
  This follows from the arguments of \cite[Section 5]{doetei:10}.
\end{proof}

\section{Approximation theorems}\label{sec:approximationTheorems}

In order to establish existence of Markovian solutions for general
generators $A$ we could either directly apply Theorem
\ref{theorem:Ttposmaxprinciple}, where we have to assume that the
generator $A$ satisfies on a dense domain $D$ a generalized positive
maximum principle and that for at least one $ \omega_0 > \omega $ the
range of $ A - \omega_0 $ is dense, or we approximate a general
generator $A$ by (pure jump) generators $A^n $ and apply the following
(well known) approximation theorems:

\begin{theorem}\label{thm:approximation}
  Let $ (P_t^n)_{n \in \mathbb{N}, t\geq 0} $ be a sequence of
  strongly continuous semigroups on a Banach space $Z$ with generators
  $ (A^n)_{n \in \mathbb{N}} $ such that there are uniform (in $n$)
  growth bounds $ M \geq 1 $ and $ \omega \in \mathbb{R} $ with
  \begin{align}\label{eq:growthbounduni}
    \| P^n_t \|_{L(Z)} \leq M \exp(\omega t)
  \end{align}
  for $ t \geq 0 $. Let furthermore
  $ D \subset \cap_n \operatorname{dom}(A^n)$ be a dense subspace with
  the following three properties:
  \begin{enumerate}
  \item $D$ is an invariant subspace for all $ P^n $, i.e.~for all
    $ f \in D $ we have $ P^n_t f \in D $, for $ n \geq 0 $ and
    $ t \geq 0 $.
  \item There is a norm $ {\|.\|}_D $ on $ D $ such that there are
    uniform growth bounds with respect to $ {\|.\|}_D $, i.e.~there
    are $ M_D \geq 1 $ and $ \omega_D \in \mathbb{R} $ with
    \[ {\| P^n_t f \|}_D \leq M_D \exp(\omega_Dt) {\|f\|}_D
    \]
    for $ t \geq 0 $ and for $ n \geq 0 $.
  \item The sequence $ A^n f $ converges as $ n \to \infty $ for each
    $ f \in D $, in the following sense: there exists a sequence of
    numbers $ a_{nm} \to 0 $ as $ n,m \to \infty $ such that
    \[
      \| A^n f - A^m f \| \leq a_{nm} {\| f \|}_D
    \]
    holds true for every $ f \in D $ and for all $n,m$.
  \end{enumerate}
  Then there exists a strongly continuous semigroup
  $ (P_t^\infty)_{t \geq 0} $ with the same growth bound on $ Z $ such
  that $ \lim_{n \to \infty} P^n_t f = P^\infty_t f $ for all
  $ f \in Z $ uniformly on compacts in time and on bounded sets in
  $D$. Furthermore on $ D $ the convergence is of order $ O(a_{nm})
  $. If in addition for each $n \in \mathbb{N}$, $(P_t^n)_{t \geq 0}$
  is a generalized Feller semigroup, then this property transfers also
  to the limiting semigroup.
\end{theorem}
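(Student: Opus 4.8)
The plan is to adapt the classical Chernoff–Trotter–Kato argument to the present setting, where the semigroups are not contractive but only satisfy a uniform exponential growth bound. First I would exploit the uniform bound \eqref{eq:growthbounduni} to reduce — at least conceptually — to a uniformly pseudo-contractive situation: by rescaling time and passing to an equivalent norm on $Z$ (as in the proof of Theorem~\ref{th:perturbation}), one may assume $\|P^n_t\|_{L(Z)}\le\exp(\omega t)$ with the same $\omega$ for every $n$. The key quantity is the resolvent $R^n(\alpha)=\int_0^\infty e^{-\alpha s}P^n_s\,ds$ for $\alpha>\omega$; the strategy is to show that $R^n(\alpha)f$ is a Cauchy sequence in $Z$ for each $f\in Z$, identify the limit as the resolvent $R^\infty(\alpha)$ of a strongly continuous semigroup via the Hille–Yosida theorem, and then upgrade resolvent convergence to semigroup convergence by the Trotter–Kato theorem.

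The heart of the matter is the Cauchy estimate on $D$. For $f\in D$ one writes, using that $D$ is invariant and that $A^n$ generates $P^n$ on $D$, the telescoping identity
\begin{equation*}
  P^n_t f - P^m_t f = \int_0^t \frac{d}{ds}\bigl(P^n_{t-s}P^m_s f\bigr)\,ds
  = \int_0^t P^n_{t-s}\bigl(A^m - A^n\bigr)P^m_s f\,ds.
\end{equation*}
Taking norms, using the uniform bound on $Z$ for $P^n_{t-s}$, property~(iii) to estimate $\|(A^m-A^n)P^m_s f\|\le a_{nm}\|P^m_s f\|_D$, and property~(ii) to bound $\|P^m_s f\|_D\le M_D\exp(\omega_D s)\|f\|_D$, one obtains
\begin{equation*}
  \|P^n_t f - P^m_t f\| \le a_{nm}\,\|f\|_D \int_0^t \exp\!\bigl(\omega(t-s)+\omega_D s\bigr) M_D\,ds,
\end{equation*}
which is $\le a_{nm}\,C(t)\,\|f\|_D$ with $C(t)$ locally bounded in $t$. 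Hence $(P^n_t f)_n$ is Cauchy in $Z$, uniformly for $t$ in compacts and $f$ in $\|\cdot\|_D$-bounded sets, and the convergence on $D$ is of order $O(a_{nm})$ as claimed. Since $D$ is dense in $Z$ and the operators $P^n_t$ are uniformly bounded in $L(Z)$ (uniformly for $t$ in compacts), a standard $3\varepsilon$-argument extends the convergence to all $f\in Z$, defining $P^\infty_t f:=\lim_n P^n_t f$. The semigroup property, $P^\infty_0=I$, and the growth bound $\|P^\infty_t\|_{L(Z)}\le M\exp(\omega t)$ pass to the limit immediately; strong continuity $P^\infty_t f\to f$ as $t\to 0$ follows first for $f\in D$ (where $\|P^\infty_t f - f\|\le \|P^\infty_t f - P^n_t f\| + \|P^n_t f - f\|$ and both terms are controlled) and then for all $f\in Z$ by density and uniform boundedness.

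Finally, the generalized Feller property of the limit is a soft consequence: if each $P^n_t$ is positive, i.e.\ maps $\{f\ge0\}$ into $\{f\ge0\}$, then so does the pointwise limit $P^\infty_t$, since $\mathcal{B}^\rho(X)$ consists of functions for which $f\ge 0$ is preserved under norm limits; conditions \ref{enu:defgenfeller-0id}--\ref{enu:defgenfeller-bound} of Definition~\ref{def:genFeller} have just been verified, and \ref{enu:defgenfeller-positivity} follows from positivity of each $P^n_t$. I expect the main obstacle to be bookkeeping the two different norms simultaneously — making sure that $D$ equipped with $\|\cdot\|_D$ need not be complete and that all estimates are genuinely uniform in $n$ — together with the (routine but slightly delicate) verification that the limit resolvent has dense range, so that Hille–Yosida or Trotter–Kato actually applies rather than merely producing a candidate pseudo-resolvent.
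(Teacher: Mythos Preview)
Your core argument is correct and is essentially the paper's proof: the telescoping identity
\[
  P^n_t f - P^m_t f = \int_0^t P^n_{t-s}(A^m-A^n)P^m_s f\,ds
\]
together with the uniform growth bounds on $Z$ and on $(D,\|\cdot\|_D)$ gives the Cauchy estimate on $D$, and density plus uniform boundedness extends it to $Z$. The paper does exactly this, with no detour through resolvents or Trotter--Kato.

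Your framing in terms of resolvents and Hille--Yosida is therefore unnecessary overhead: the direct estimate already \emph{constructs} $P^\infty_t$ as a strong limit, so there is no need to identify a candidate resolvent, check dense range, or invoke Trotter--Kato. The worry you flag at the end (that the limit might only be a pseudo-resolvent) simply does not arise in the direct approach. One minor slip: your proposed renorming to make each $P^n$ pseudo-contractive would in general require a \emph{different} equivalent norm for each $n$, which is not useful here; fortunately you never actually use it, and the estimate goes through with the original norm and the constant $M$ (which you dropped).
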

\begin{proof}
  Since $ D $ is invariant under $ P^n $, $ P^m$ and
  $ D \subset \cap_n \operatorname{dom}(A^n)$ we can take first
  derivatives and obtain
  \[
    \frac{d}{ds} P_{s}^n P^m_{t-s} f = P_{s}^n (A^n - A^m) P^m_{t-s} f
    \, ,
  \]
  where we use that $ A^n $ commutes with $ P^n $ on
  $ \operatorname{dom}(A^n) $, which yields for $ m,n \in \mathbb{N} $
  and $ t \geq 0 $
  \[
    P^n_t f - P^m_t f = \int_0^t P^n_{s}(A^n - A^m)P^m_{t-s}f ds \, .
  \]
  By \eqref{eq:growthbounduni}, (i), (iii) and (ii), we can estimate
  \begin{align*}
    \|P^n_t f - P^m_t f\| &\leq \int_0^t \|P^n_{s}(A^n - A^m)P^m_{t-s}f\| ds \\
                          &\leq  \int_0^t M \exp(\omega s) \|(A^n - A^m)P^m_{t-s}f\| ds\\
                          &\leq  \int_0^t M \exp(\omega s) a_{nm} {\| P^m_{t-s} f \|}_D ds\\
                          & \leq  a_{nm} M M_D \|f\|_D\int_0^t  \exp(\omega s +\omega_D(t-s))  ds.
  \end{align*}
  This yields uniform convergence of $ P^n_t f $ on compact intervals
  in time (with respect to the norm of $ Z $) for all $ f \in D
  $. Hence we obtain bounded linear operators $ P^\infty_t $
  satisfying the semigroup property and the same growth bound on $ Z $
  with constants $ \omega $ and $ M $, in particular
  $ \lim_{n \to \infty} P^n_t f = P^\infty_t f $ for all $ f \in Z
  $. The convergence rate on $D$ is obvious. Strong continuity follows
  by uniform convergence on $D$ on bounded sets and by the respective
  stability estimates. For generalized Feller semigroups, the only
  additional property is positivity, which of course remains in the
  limit.
\end{proof}

For the purposes of affine processes a slightly more general version
of the approximation theorem is needed, which we state in the sequel:

\begin{theorem}\label{thm:approximation_gen}
  Let $ (P_t^n)_{n \in \mathbb{N}, t\geq 0} $ be a sequence of
  strongly continuous semigroups on a Banach space $Z$ with generators
  $ (A^n)_{n \in \mathbb{N}} $ such that there are uniform (in $n$)
  growth bounds $ M \geq 1 $ and $ \omega \in \mathbb{R} $ with
  \[
    \| P^n_t \|_{L(Z)} \leq M \exp(\omega t)
  \]
  for $ t \geq 0 $. Let furthermore
  $ D \subset \cap_n \operatorname{dom}(A^n)$ be a \emph{subset} with
  the following two properties:
  \begin{enumerate}
  \item The linear span $\operatorname{span}(D)$ is dense.
  \item There is a norm $ {\|.\|}_D $ on $ \operatorname{span}(D) $
    such that for each $ f \in D $ and for $ t > 0 $ there exists a
    sequence $ a^{f,t}_{nm} $, possibly depending on $ f $ and $t >0$,
    \[
      \| A^n P^m_u f - A^m P^m_u f \| \leq a^{f,t}_{nm} {\| f \|}_D
    \]
    holds true for $ n, m $ and for $ 0 \leq u \leq t$, with
    $ a^{f,t}_{nm} \to 0 $ as $ n,m \to \infty $.
  \end{enumerate}
  Then there exists a strongly continuous semigroup
  $ (P_t^\infty)_{t \geq 0} $ with the same growth bound on $ Z $ such
  that $ \lim_{n \to \infty} P^n_t f = P^\infty_t f $ for all
  $ f \in Z $ uniformly on compacts in time. If in addition for each
  $n \in \mathbb{N}$, $(P_t^n)_{t \geq 0}$ is a generalized Feller
  semigroup, then this property transfers also to the limiting
  semigroup.
\end{theorem}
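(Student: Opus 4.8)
The plan is to reduce this statement to the previous Theorem~\ref{thm:approximation} by a localization-and-closure argument, but since the invariance hypothesis (i) of that theorem is no longer assumed, the right strategy is instead to rerun its proof directly with the telescoping identity adapted to the weaker hypothesis. First I would recall the standard semigroup identity, valid for $ f \in D \subset \dom(A^n)\cap\dom(A^m) $, $ t \geq 0 $,
\[
  P^n_t f - P^m_t f = \int_0^t P^n_{s}\,(A^n - A^m)\,P^m_{t-s}f \, ds,
\]
which holds because $ s \mapsto P^n_s P^m_{t-s} f $ is differentiable with derivative $ P^n_s(A^n-A^m)P^m_{t-s}f $; note this requires $ P^m_{t-s} f \in \dom(A^n) $, which is the point where hypothesis (ii) must be read carefully — the quantity $ A^n P^m_u f $ appearing there is exactly what we need to make sense of, so the statement of (ii) tacitly includes $ P^m_u f \in \dom(A^n) $ for $ 0 \le u \le t $. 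Granting this, I estimate
\[
  \| P^n_t f - P^m_t f \| \leq \int_0^t \| P^n_s \|_{L(Z)} \, \| (A^n-A^m) P^m_{t-s} f \| \, ds \leq M \int_0^t \exp(\omega s)\, a^f_{nm} {\| f \|}_D \, ds \leq a^f_{nm} {\| f \|}_D \, M\, \frac{\exp(\omega t)\vee 1}{|\omega|\vee 1}\, C_t,
\]
with $ C_t $ a finite constant depending only on $ t $; crucially the bound is $ a^f_{nm} $ times a constant independent of $ n,m $, and $ a^f_{nm}\to 0 $. Hence $ (P^n_t f)_n $ is Cauchy in $ Z $, uniformly for $ t $ in compact intervals, for every $ f \in D $.

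Next I would pass from $ D $ to $ \operatorname{span}(D) $ by linearity — the Cauchy property is preserved under finite linear combinations, though the controlling constant now depends on the particular representation of the element as a combination — and then from $ \operatorname{span}(D) $ to all of $ Z $ by the uniform bound \eqref{eq:growthbounduni}: given $ f \in Z $ and $ \eps > 0 $, pick $ g \in \operatorname{span}(D) $ with $ \| f - g \| < \eps $, and then
\[
  \| P^n_t f - P^m_t f \| \leq \| P^n_t(f-g)\| + \|P^n_t g - P^m_t g\| + \|P^m_t(g-f)\| \leq 2 M \exp(\omega t)\, \eps + \|P^n_t g - P^m_t g\|,
\]
so the $ \limsup_{n,m} $ of the left side is at most $ 2M\exp(\omega t)\eps $, hence zero. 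Define $ P^\infty_t f := \lim_{n\to\infty} P^n_t f $. The semigroup law $ P^\infty_{t+s} = P^\infty_t P^\infty_s $, the identity at $ t = 0 $, and the growth bound $ \| P^\infty_t \|_{L(Z)} \le M\exp(\omega t) $ all pass to the limit by the uniform-on-compacts convergence (for the semigroup law one writes $ P^n_{t+s}f = P^n_t P^n_s f $ and uses that $ P^n_s f \to P^\infty_s f $ together with the uniform operator bound to exchange limit and operator). Strong continuity of $ t \mapsto P^\infty_t f $ at $ t = 0 $ follows because on $ \operatorname{span}(D) $ the convergence $ P^n_t g \to P^\infty_t g $ is uniform for $ t $ near $ 0 $, each $ P^n_\cdot g $ is continuous at $ 0 $, and a uniform limit of continuous functions is continuous; then one extends to $ Z $ by the same $ \eps/3 $-argument using the uniform bound. (Here one should invoke the Trotter–Kato-type principle that a pointwise limit of strongly continuous semigroups with a common growth bound, converging uniformly on compacts, is again a strongly continuous semigroup — or simply cite that the limit operators form a semigroup satisfying (i)–(iv) of Definition~\ref{def:genFeller} when the $ P^n $ do.) Finally, if each $ P^n_t $ is positive, then for $ f \ge 0 $ we have $ P^n_t f \ge 0 $ for all $ n $, and since pointwise limits in $ \mathcal{B}^\rho(X) $ of nonnegative functions are nonnegative (evaluation functionals are continuous), $ P^\infty_t f \ge 0 $; together with the already-established properties this gives the generalized Feller property.

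I expect the main obstacle to be purely one of \emph{bookkeeping around hypothesis (ii)}: unlike in Theorem~\ref{thm:approximation}, we do not know $ P^m_u f \in D $, so the telescoping identity must be justified using only $ P^m_u f \in \dom(A^n)\cap\dom(A^m) $ — which is what (ii) supplies implicitly through the well-definedness of $ A^n P^m_u f $ — and one must check that $ u \mapsto P^n_{t-u} P^m_u f $ is genuinely differentiable with the claimed derivative under these weaker assumptions (this is standard: $ \frac{d}{du} P^n_{t-u} P^m_u f = -A^n P^n_{t-u}P^m_u f + P^n_{t-u} A^m P^m_u f = P^n_{t-u}(A^m - A^n)P^m_u f $, legitimate once the relevant elements lie in the appropriate domains and $ u \mapsto A^n P^m_u f $, $ u \mapsto A^m P^m_u f $ are continuous, the latter because $ A^m P^m_u f = \frac{d}{du}P^m_u f = P^m_u A^m f $ when $ f \in \dom(A^m) $). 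No invariance of a norm-$ \|\cdot\|_D $-bounded subspace is needed because we never iterate the estimate; the single application of the integral identity with the \emph{a priori} bound from (ii) already closes the argument.
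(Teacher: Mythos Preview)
Your proposal is correct and follows essentially the same route as the paper: apply the integral identity $P^n_t f - P^m_t f = \int_0^t P^n_{s}(A^n - A^m)P^m_{t-s}f\,ds$ for $f\in D$, bound the integrand by $M\exp(\omega s)\,a^f_{nm}\|f\|_D$ via hypothesis~(ii), conclude Cauchy uniformly on compacts for $f\in D$, then extend to $\operatorname{span}(D)$ and to $Z$ by density and the uniform growth bound, and finally check the semigroup property, strong continuity, and positivity in the limit. If anything you are more explicit than the paper about the implicit domain requirement $P^m_u f\in\operatorname{dom}(A^n)$ hidden in the statement of~(ii), and about the $\varepsilon/3$ mechanics of the extension and strong-continuity steps.
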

\begin{proof}
  Again we apply the following well known formula for $ f \in D $,
  $ m,n \geq 0 $ and $ t \geq 0 $
  \[
    P^n_t f - P^m_t f = \int_0^t P^n_{s}(A^n - A^m)P^m_{t-s}f ds.
  \]
  By \eqref{eq:growthbounduni}, (i), (iii) and (ii), we can estimate
  \begin{align*}
    \|P^n_t f - P^m_t f\| &\leq \int_0^t \|P^n_{s}(A^n - A^m)P^m_{t-s}f\| ds \\
                          &\leq  \int_0^t M \exp(\omega s) \|(A^n - A^m)P^m_{t-s}f\| ds\\
                          &\leq  \int_0^t M \exp(\omega s) a^{f,t}_{nm} {\| f \|}_D ds\\
  \end{align*}
  This yields uniform convergence of $ P^n_t f $ on compact intervals
  in time (with respect to the norm of $ Z $) for all $ f \in D $ by
  assumption. Hence we obtain bounded linear operators $ P^\infty_t $
  satisfying the semigroup property and the same growth bound on $ Z $
  with constants $ \omega $ and $ M $, in particular
  $ \lim_{n \to \infty} P^n_t f = P^\infty_t f $ for all $ f \in Z $
  by stability of growth bounds. Strong continuity follows by the
  respective stability estimates, i.e.~
  \[
    \| P^\infty_t f - f \| \leq \| P^\infty_t f - P^\infty_t g \| + \|
    P^\infty_t g - P^n_t g \| + \| P^n_t g - g \| + \| g - f \|
  \]
  for $ g \in \operatorname{span}(D) $, where the first term is small
  due to $ \| P^\infty_t \| \leq M \exp(\omega t) $ for $ t \geq 0 $.
  For generalized Feller semigroups, the only additional property is
  positivity, which of course remains in the limit.
\end{proof}

Our first application of Theorem \ref{thm:approximation} is the next
proposition, where generalized Feller processes of pure jump type with
unbounded activity are constructed.

\begin{proposition}\label{prop:jump_perturbation}
  Let $ (X,\rho) $ be a weighted space with weight function
  $ \rho \geq 1 $. Consider an operator $A$ on $\mathcal{B}^{\rho}(X)$
  with dense domain $\operatorname{dom}(A)$ generating on
  $ \mathcal{B}^\rho(X) $ a generalized Feller semigroup
  $ (P_t)_{t\geq 0} $ of transport type as in \eqref{eq:transport},
  such that for all $t \geq 0$ we have
  $ \|P_t\|_{L(B^{\rho}(X))} \leq M_1 \exp(\omega t)$ for some $M_1$
  and $ \omega $ and such that
  $ \mathcal{B}^{\sqrt{\rho}}(X) \subset \mathcal{B}^\rho(X) $ is left
  invariant and $(P_t)_{t\geq 0} $ is strongly continuous thereon.

  Consider furthermore a family of finite nonnegative measures
  $ \mu(x,.)$ for $ x \in X $ on $ X $ such that the operator $B$ acts
  on $\mathcal{B}^{\rho}(X)$ by
  \[
    B f (x) : = \int (f(y) - f(x)) \mu(x,dy)
  \]
  for $ x \in X $ yielding continuous functions on
  $ \{\rho \leq R \} $ for $ R \geq 0 $, and such that the following
  properties hold true:
  \begin{itemize}
  \item For all $ x \in X $
    \begin{align}\label{eq:cond1} \int \rho(y)
      \mu(x,dy) \leq M \rho^2 (x),
    \end{align}
    as well as
    \begin{align} \label{eq:cond2} \int \sqrt{\rho(y)} \mu(x,dy) \leq
      M \rho (x),
    \end{align}
    and
    \begin{align}\label{eq:cond3}
      \int \mu(x,dy) \leq M \sqrt{\rho (x)},
    \end{align}
    hold true for some constant $M$.
  \item For some constant $ \widetilde{\omega} \in \mathbb{R} $
    \begin{align}\label{eq:cond4}
      \int \Big | \frac{\sup_{t \geq 0} \exp(-\omega t) P_t \rho(y) -\sup_{t \geq 0} \exp(- \omega t ) P_t \rho(x)}{\sup_{t \geq 0} \exp(-\omega t) P_t \rho(x)} \Big | \mu(x,d y)
      \leq \widetilde{\omega} ,
    \end{align}
    for all $ x \in X $. In particular
    $ y \mapsto \sup_{t \geq 0} \exp(-\omega t) P_t \rho(y) $ should
    be integrable with respect to $ \mu(x,.) $
  \end{itemize}
  Then $ A + B $ generates a generalized Feller semigroup
  $(P_t^{\infty})_{t \geq 0}$ on $ \mathcal{B}^\rho(X) $ satisfying
  $\|P^{\infty}_t\|_{L(\mathcal{B}^{\rho}(X))} \leq M_1 \exp((\omega +
  \tilde{\omega})t)$.
\end{proposition}
\begin{proof}
  We apply Theorem \ref{thm:approximation} with
  $D=\operatorname{dom}(A) \cap \mathcal{B}^{\sqrt{\rho}}(X)$.  We
  construct a sequence of pure jump, bounded activity generators on
  $ \mathcal{B}^\rho (X) $ \emph{and} on
  $ \mathcal{B}^{\sqrt{\rho}}(X) $ by
  \[
    B^n f (x) : = \int (f(y) - f(x)) \frac{n}{\rho(x) \lor n}
    \mu(x,dy) \, .
  \]
  Indeed, they are bounded with respect to both norms: by
  \eqref{eq:cond1} and \eqref{eq:cond3}, we have for
  $f \in \mathcal{B}^\rho (X) $
  \begin{align*}
    \frac{B^nf(x)}{\rho(x)} &\leq \frac{ \int ( \frac{f(y)}{\rho(y)}
                              \rho(y) -f(x) ) \frac{n}{\rho(x) \lor n}\mu(x,dy) }{\rho(x)}\\
                            & \leq \|f\|_{\rho} 2Mn 
  \end{align*}
  and by \eqref{eq:cond2} and \eqref{eq:cond3} for
  $f \in \mathcal{B}^{\sqrt{\rho}}(X) $
  \begin{align*}
    \frac{B^nf(x)}{\sqrt{\rho(x)}} &\leq \frac{ \int ( \frac{f(y)}{\sqrt{\rho(y)}}
                                     \sqrt{\rho(y)} -f(x) ) \frac{n}{\rho(x) \lor n}\mu(x,dy) }{\sqrt{\rho(x)}}\\
                                   & \leq \|f\|_{\sqrt{\rho}} Mn( \sqrt{n}+1).
  \end{align*}
  Consider now the operators $A^n:=A+B^n$, on the one hand with domain
  $ \operatorname{dom}(A) $ and on the other hand with domain
  $D= \operatorname{dom}(A) \cap \mathcal{B}^{\sqrt{\rho}}(X) $.  By
  Theorem \ref{th:perturbation} they generate generalized Feller
  semigroups $P^n$ on $\mathcal{B}^{\rho}(X)$ and
  $\mathcal{B}^{\sqrt{\rho}}(X)$ respectively by assumption.  Since
  the domain of the generator is anyhow left invariant, Condition (i)
  of Theorem \ref{thm:approximation} is satisfied with
  $D=\operatorname{dom}(A) \cap \mathcal{B}^{\sqrt{\rho}} (X) $.
  Moreover, by \eqref{eq:cond2} and \eqref{eq:cond3} we have for
  $f \in D$
  \begin{align*}
    \big | \frac{B^nf(x) - B^mf(x)}{\rho(x)} \big | & \leq {\| f
                                                      \|}_{\sqrt{\rho}}\frac{ \int \sqrt{\rho(y)} | \frac{n}{\rho(x)
                                                      \lor n} - \frac{m}{\rho(x) \lor m} |\mu(x,dy) }{\rho(x)} + \\
                                                    & + {\| f
                                                      \|}_{\sqrt{\rho}}\frac{ \int  | \frac{n}{\rho(x)
                                                      \lor n} - \frac{m}{\rho(x) \lor m} |\mu(x,dy) }{\sqrt{\rho(x)}}
                                                      \leq
                                                      a_{nm} {\| f \|}_{\sqrt{\rho}}
  \end{align*}
  for some $ a_{nm} \to 0 $ as $ n,m \to \infty $. Hence we also have
  that
  \[ {\| A f + B^n f - (A f + B^m f) \|}_\rho \leq a_{nm} {\| f
      \|}_{\sqrt{\rho}}
  \]
  for all $ f \in D $, implying that Condition (iii) of Theorem
  \ref{thm:approximation} is satisfied.

  We finally have to check whether the growth bounds on
  $ \mathcal{B}^\rho(X) $ and
  $ D \subseteq \mathcal{B}^{\sqrt{\rho}}(X) $ are uniform in $n$. In
  view of Remark \ref{rem:quasicontractive} we denote
  \[
    \tilde \rho(y) = \sup_{t \geq 0} \exp(-\omega t) P_t \rho(y)
  \]
  and apply Condition \eqref{eq:cond4} and the following immediate
  consequence of Condition \eqref{eq:cond4}
  \begin{align*}
    \int | \frac{\sqrt{\tilde \rho(y)} -\sqrt{\tilde \rho(x)}}{\sqrt{\tilde \rho(x)}} |
    \mu(x,d y) &\leq \int \big | \frac{\sqrt{\tilde \rho(y)} -\sqrt{\tilde \rho(x)}}{\sqrt{\tilde \rho(x)}} \big | \frac{\sqrt{\tilde \rho(y)} +\sqrt{\tilde \rho(x)}}{\sqrt{\tilde \rho(x)}}
                 \mu(x,d y) \\
               &=\int | \frac{\tilde \rho(y) - \tilde \rho(x)}{\tilde \rho(x)} |\mu(x,d y)  \leq \widetilde{\omega}
                 , \quad \text{ for all } x \in X \, .
  \end{align*}
  These conditions yield that
  $B^n\tilde \rho(x) \leq \widetilde{\omega} \tilde \rho(x)$ and
  $B^n\sqrt{\tilde \rho(x)} \leq \widetilde{\omega} \sqrt{\tilde
    \rho(x)}$ for all $n \in \mathbb{N}$. By
  Remark~\ref{rem:quasicontractive} the transport semigroup $ P $ is
  quasicontractive with respect to $ \tilde \rho $ and also
  $\sqrt{\tilde{\rho}} $. The latter follows by an analogous estimate
  as in \eqref{eq:estimate}.

  Hence we can readily apply the second part of
  Theorem~\ref{th:perturbation} with $\rho$ ($\sqrt{\rho}$
  respectively) replaced by $\tilde{\rho}$ ($\sqrt{\tilde{\rho}}$
  respectively) to get the following uniform growth bound for
  $(P^n)_n$
  \[
    \| P_t^n\|_{(L(\mathcal{B}^{\rho}(X)),\tilde{\rho})} \leq
    \exp((\omega+\widetilde{\omega})t), \quad \|
    P_t^n\|_{(L(\mathcal{B}^{\sqrt{\rho}}(X)),\sqrt{\tilde{\rho}} )}
    \leq \exp((\frac{\omega}{2}+\widetilde{\omega})t)
  \]
  on $ \mathcal{B}^\rho (X) $
  ($D \subseteq \mathcal{B}^{\sqrt{\rho}} (X)$ respectively) equipped
  with the norm induced by $\tilde{\rho}$ ($\sqrt{\tilde{\rho}}$
  respectively). The latter is indicated in the subscript of the above
  norms.  Translating this back to the norm corresponding to $\rho$
  yields
  \begin{align*}
    \|P^n_tf\|_{\rho}&= \sup_{x \in X} \frac{|P^n_t f(x)|}{\tilde{\rho}(x)} \frac{\tilde{\rho}(x)}{\rho(x)}\leq \| P^n_t f\|_{\tilde{\rho}} \sup_{x \in X} \frac{\tilde{\rho}(x)}{\rho(x)} \\
                     &\leq \exp((\omega +\widetilde{\omega})t)\|f\|_{\tilde{\rho}} M_1 \leq  \exp((\omega +\widetilde{\omega})t)\|f\|_{\rho} M_1 ,
  \end{align*}  
  where we use $\rho \leq \tilde{\rho} \leq M_1 \rho$ as asserted in
  Remark \ref{rem:quasicontractive}.  Similarly we have
  \[
    \|P^n_tf\|_{\sqrt{\rho}}\leq \exp((\frac{\omega}{2}
    +\widetilde{\omega})t)\|f\|_{\sqrt{\rho}} \sqrt{M_1}.
  \]
  We therefore also obtain uniform growth bounds for the orginal
  norms. Hence the approximation Theorem \ref{thm:approximation} can
  be applied and leads to a generalized Feller semigroup
  $P^{\infty}_t$ with generator $A+B$ whose growth bound satisfies
  $\|P^{\infty}_t\|_{L(\mathcal{B}^{\rho}(X))} \leq M_1\exp((\omega +
  \tilde{\omega})t)$ as asserted.
\end{proof}

In contrast to classical Feller theory,
Proposition\ref{prop:jump_perturbation} allows to construct processes
with unbounded jump intensities directly if $ \rho $ is unbounded on
$ X $. The general character of the proposition allows to build
general processes from simple ones by perturbation.  The following
corollary shows this in the case of affine processes on
$\mathbb{R}^d$.

\begin{corollary}\label{cor:affine}
  Let $ X \subset \mathbb{R}^d $ be closed subset and let
  $ \rho(x) : = 1 + {\|x\|}^2 $ be a weight function. Let furthermore
  $ A $ be the generator of a generalized Feller semigroup
  $ (P_t)_{t\geq 0} $ on $ \mathcal{B}^\rho(X) $, which satisfies
  $\|P_t\|_{L(B^{\rho}(X))}\leq M_1\exp(\omega t)$ for some constants
  $M _1\geq 1$ and $\omega \in \mathbb{R}$ and which is of transport
  type as in \eqref{eq:transport} with $x\mapsto \psi_t(x)$ being an
  affine function.  Assume furthermore that it leaves
  $ \mathcal{B}^{\sqrt{\rho}}(X) \subset \mathcal{B}^\rho(X) $
  invariant and generates a strongly continuous semigroup there as
  well.  Let
  \[
    \mu(x,d\xi) := \sum_{i=1}^d x_i \mu^i(d\xi)
  \]
  for some possibly signed measures $ \mu^i $ with bounded support
  such that $ \mu(x,d\xi) $ defines a family of finite nonnegative
  measures on $X$ and $x+\operatorname{supp}(\mu(x, \cdot)) \in X$ for
  all $x \in X$. Then
  \[
    f \mapsto \big ( x \mapsto A f(x) + \int (f(x+\xi) - f(x))
    \mu(x,d\xi) \big)
  \]
  for $ f \in \operatorname{dom}(A)\cap \mathcal{B}^{\sqrt{\rho}}(X) $
  generates a generalized Feller semigroup on $ \mathcal{B}^\rho(X) $.
\end{corollary}
\begin{proof} Substituting $y=x+\xi$, one easily verifies that
  Conditions \eqref{eq:cond1} - \eqref{eq:cond3} of Proposition
  \ref{prop:jump_perturbation} are satisfied since the supports of
  $ \mu^i $ are bounded. Concerning Condition \eqref{eq:cond4}, denote
  \[
    \tilde{\rho}(x) =\sup_{t\geq 0} \exp(-\omega t) P_t \rho(x) \, .
  \]
  In particular we know that $ \rho \leq \tilde \rho$ and it holds
  that $P_tf(x)=f(\psi_t(x))$ where $\psi$ is an affine
  function. Using this together with
  $ | \sup_t c(t) - \sup_t d(t)| \leq \sup_t |c(t) - d(t) | $ we
  obtain for some $\widetilde{\omega}$
  \begin{align*}
    & \int  \big | \frac{\tilde{\rho}(x+  \xi)-\tilde{\rho}(x)}{\tilde{\rho}(x)} \big |  \sum_{i=1}^d x_i \mu^i(d\xi)
    \\&\quad  \leq\int \big| \frac{\sup_{t \geq 0} \exp(-\omega t) |P_t\rho(x + \xi) -  P_t \rho( x)|}{\tilde\rho(x)} \big |  \sum_{i=1}^d x_i \mu^i(d\xi)\\      
    &\quad \leq\int \big| \frac{\sup_{t \geq 0} \exp(-\omega t) |\rho(\psi_t(x + \xi)) -  \rho(\psi_t (x))|}{\tilde\rho(x)} \big |   \sum_{i=1}^d x_i \mu^i(d\xi)\\   
    &= \int \big | \frac{ \sup_{t \geq 0} \exp(-\omega t) C(\|\xi\|+ \| x \| \|  \xi \|
      + \| \xi \|^2 )}{1 +\|x\|^2} \big |\sum_{i=1}^d x_i \mu^i(d\xi)   \leq \widetilde{\omega} \, ,
  \end{align*}
  where $C$ denotes some constant. All other requirements are met as
  well and we can conclude.
\end{proof}

\section{Lifting Stochastic Volterra
  processes}\label{sec:markovianlift_abstract}

In the subsequent sections our main goal is to treat the following
types of SPDEs
\begin{equation}
  \begin{split} \label{eq:SPDE_weak}
    d \lambda_t &= \mathcal{A}^* \lambda_t dt + \nu dX_t, \\
    \lambda_0 & \in \mathcal{E} \, ,
  \end{split}
\end{equation}
on spaces $\mathcal{E} \subset Y^*$ as introduced in Section
\ref{subsec:dual} where $\mathcal{A}^*$ is the generator of a strongly
continuous semigroup $\mathcal{S}^*$ on $Y^*$, $\nu \in Y^*$ (or in a
slightly larger space denoted by $Z^*$ in the sequel), $g \in Y$ and
$X$ a real valued It\^o-semimartingale whose differential
characteristics depend linearly on $ \langle g , \lambda_t \rangle $,
which will turn out to be the solution of the Volterra equation with
kernel \eqref{eq:kernelintro}.

\begin{remark}
  As indicated in the introduction, it will be easy to consider vector
  valued structures, with $X$ a semimartingale whose characteristics
  depend -- instead of the $\mathbb{R}$-valued pairing
  $ \langle g , \lambda_t \rangle$ -- linearly on a projection of
  $ \lambda_t $ onto some space, finite or infinite dimensional. For
  the sake of the first exposition we stay one dimensional here.
\end{remark}

In the following we summarize the main ingredients of our setting.
\begin{assumption}\label{ass:weak_existence}
  Throughout this section we shall work under the following
  conditions:
  \begin{enumerate}
  \item We consider Banach spaces $Z$ and $Y$ such that
    $ Z \subset Y $ and $ Z $ embeds continuously into $Y$, and their
    duals $ Y^* \subset Z^* $ with their respective weak-$*$-topology.
  \item We are given an admissible weight function
    $ \rho = 1 + \rho_0 $ in the sense of Section~\ref{subsec:dual} on
    $ Y^* $ such that
    \[
      \rho_0(\lambda) = {\|\lambda\|}_{Y^*}^2, \quad \lambda \in Y^*,
    \]
    where $\|\cdot\|_{Y^*}$ denotes the norm on $Y^*$.
  \item We are given a closed convex cone $ \mathcal{E} \subset Y^* $
    such that $ (\mathcal{E},\rho) $ is a weighted space in the sense
    of Section \ref{sec:genFeller}.  This will serve as statespaces of
    \eqref{eq:SPDE_weak}.
  \item We assume that a semigroup $ \mathcal{S}^* $ with generator
    $ \mathcal{A}^* $ acts in a strongly continuous way on $ Y^* $ and
    $ Z^* $, with respect to the respective norm topologies.
  \item We assume that $ \lambda \mapsto \mathcal{S}^*_t\lambda $ is
    weak-$*$-continuous on $ Y^* $ and on $ Z^* $ for every
    $ t \geq 0 $ (considering the weak-$*$-topology on both the domain
    and the image space).
  \item We suppose that the (pre-) adjoint operator of
    $ \mathcal{A}^* $, denoted by $\mathcal{A}$ and domain
    $ \operatorname{dom}(\mathcal{A}) \subset Z \subset Y $, generates
    a strongly continuous semigroup on $Z$ with respect to the
    respective norm topology (but \emph{not} necessarily on $ Y $).
  \end{enumerate}
\end{assumption}

\begin{remark}
  We could allow more general weight functions $ \rho $ but it is not
  necessary for our purposes here and the formulation of their
  abstract properties is cumbersome.
\end{remark}

\begin{remark}\label{rem:examplespaces}
  A prototypical example presented in Section \ref{sec:markovianlift}
  is given by
  $ Y = C_b(\mathbb{R}_{+} \cup \{ \infty \}, \mathbb{R}) $ with
  supremum norm and $ Z $ the space of functions $ g \in Y $ such that
  $(x \mapsto x g(x) ) \in Y$ together with the operator norm on it,
  i.e.~ $ \| g \| = \sqrt{\| g \|^2 + \sup_{x \geq 0} | xg(x) |^2} $
  for $ g \in Z $.  The semigroup is given by
  $\mathcal{S}_t f(x) = \exp(-xt)f(x) $ for $ t \geq 0 $, $ f \in Y $
  and $ x \geq 0 $. All above requirements on spaces and semigroups
  are then satisfied. Notice also that we can consider non-separable
  settings since this is not a requirement in our setup.
\end{remark}

To analyze \eqref{eq:SPDE_weak} and to construct $ \mathcal{E} $ we
first consider the following linear deterministic equation
\begin{align}\label{eq:lambda_abstract}
  d \lambda_t = \mathcal{A}^* \lambda_t dt - w \nu \langle g , \lambda_t \rangle dt 
\end{align}
for $ \lambda_0 \in Y^* $, some fixed $ g \in Y $, a real number
$ w > 0 $ and $ \nu \in Z^* $.

Under the subsequent assumptions on $ \mathcal{S}^* $ and
$ \nu \in Z^* $ we can guarantee that it can be solved on the space
$Y^*$ for all times in the mild sense with respect to the dual norm
$\|\cdot\|_{Y^*}$ by a standard Picard iteration method.

\begin{assumption}\label{ass:semigroup}
  We assume that
  \begin{enumerate}
  \item $ \mathcal{S}^*_t \nu \in Y^* $ for all $ t > 0 $ even though
    $ \nu $ does not necessarily lie in $ Y^* $ itself, but only in
    $ Z^* $;
  \item $ \int_0^t \| \mathcal{S}^*_s \nu \|^2_{Y^*} ds < \infty $ for
    all $ t > 0 $.
  \end{enumerate}
\end{assumption}
As in \eqref{eq:kernelintro}, we define
\begin{align} \label{eq:kernel} K(t) := \langle g, S_t^{*} \nu
  \rangle,
\end{align}
which will correspond to the kernel in the Volterra equation
\eqref{eq:V} and define
$R \in L^2_{\text{loc}}(\mathbb{R}_+, \mathbb{R})$ as the resolvent of
the second kind that satisfies
\begin{align}\label{eq:resolvent}
  K \ast R=R\ast K=K-R.
\end{align}
This resolvent always exists and is unique (see \cite[Theorem
3.1]{gri:90}).

\begin{proposition}\label{prop:existenceY*} 
  Under Assumption \ref{ass:semigroup}, there exists a unique mild
  solution of \eqref{eq:lambda_abstract} with values in
  $Y^*$. Additionally, the solution operator is a weak-$*$-continuous
  map $ \lambda_0 \mapsto \lambda_t $, for each $ t > 0 $, and the
  solution satisfies
  \[
    \rho(\lambda_t) \leq C \rho(\lambda_0), \quad \text{for all }
    \lambda_0 \in Y^* \text{ and } t \in [0, \varepsilon]
  \]
  for some positive constants $ C $ and $ \varepsilon $.
\end{proposition}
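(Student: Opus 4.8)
The plan is to reduce the $Y^*$-valued equation \eqref{eq:lambda_abstract} to a scalar linear Volterra equation for the real-valued path $u_t:=\langle g,\lambda_t\rangle$, to solve that by one-dimensional Volterra theory, and then to reconstruct $\lambda_t$ through the variation of constants formula. I first record two elementary consequences of Assumption~\ref{ass:semigroup}. Writing $\mathcal{S}^*_r\nu=\mathcal{S}^*_{r-r_0}\big(\mathcal{S}^*_{r_0}\nu\big)$ for $0<r_0<r$, where $\mathcal{S}^*_{r_0}\nu\in Y^*$ by (i), strong continuity of $\mathcal{S}^*$ on $Y^*$ shows that $r\mapsto\mathcal{S}^*_r\nu$ is $Y^*$-norm continuous on $(0,\infty)$, in particular Bochner measurable there; and by (ii) with Cauchy--Schwarz, $\int_0^t\|\mathcal{S}^*_r\nu\|_{Y^*}\,dr\le\sqrt{t}\,\big(\int_0^t\|\mathcal{S}^*_r\nu\|^2_{Y^*}\,dr\big)^{1/2}<\infty$ for every $t>0$. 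In particular the kernel $K(t)=\langle g,\mathcal{S}^*_t\nu\rangle$ of \eqref{eq:kernel} obeys $|K(t)|\le\|g\|_Y\|\mathcal{S}^*_t\nu\|_{Y^*}$, so $K\in L^2_{\mathrm{loc}}(\mathbb{R}_+)\subset L^1_{\mathrm{loc}}(\mathbb{R}_+)$.

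For $\lambda_0\in Y^*$ put $h(t):=\langle g,\mathcal{S}^*_t\lambda_0\rangle$; this is continuous, with $|h(t)|\le\|g\|_Y\|\mathcal{S}^*_t\|_{L(Y^*)}\|\lambda_0\|_{Y^*}$, since $\mathcal{S}^*$ is a strongly continuous semigroup on $Y^*$. As $wK\in L^1_{\mathrm{loc}}$, the scalar Volterra equation $u_t=h(t)-w\int_0^tK(t-s)u_s\,ds$ has a unique locally bounded (hence continuous) solution $u$, given explicitly by $u_t=h(t)-\int_0^tR_w(t-s)h(s)\,ds$ with $R_w$ the resolvent of $wK$ in the sense of \eqref{eq:resolvent}, or equivalently produced by a Picard iteration which is a contraction on $C([0,T_0])$ as soon as $w\|K\|_{L^1[0,T_0]}<1$ and then extends to all of $[0,\infty)$ by linearity. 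I then set
\[
  \lambda_t:=\mathcal{S}^*_t\lambda_0-w\int_0^t\mathcal{S}^*_{t-s}\nu\,u_s\,ds ,
\]
a genuine $Y^*$-valued Bochner integral by the first paragraph and local boundedness of $u$. Pairing with $g$ and commuting the bounded functional $\langle g,\cdot\rangle$ with the Bochner integral gives $\langle g,\lambda_t\rangle=h(t)-w\int_0^tK(t-s)u_s\,ds=u_t$, so $\lambda$ solves the mild form of \eqref{eq:lambda_abstract}; moreover $t\mapsto\lambda_t$ is $Y^*$-continuous because the convolution of the $L^1_{\mathrm{loc}}(Y^*)$-kernel $r\mapsto\mathcal{S}^*_r\nu$ with the continuous scalar $u$ is continuous. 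For uniqueness, any two locally bounded measurable mild solutions, paired with $g$, both solve the scalar Volterra equation and therefore coincide, and the variation of constants formula then forces the two solutions to agree in $Y^*$.

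The growth estimate follows by chaining the bounds. From the resolvent representation, $\sup_{s\le t}|u_s|\le\|g\|_Y\big(\sup_{s\le t}\|\mathcal{S}^*_s\|_{L(Y^*)}\big)\big(1+w\|R_w\|_{L^1[0,t]}\big)\|\lambda_0\|_{Y^*}$, whence
\[
  \|\lambda_t\|_{Y^*}\le\|\mathcal{S}^*_t\|_{L(Y^*)}\|\lambda_0\|_{Y^*}+w\Big(\sup_{s\le t}|u_s|\Big)\int_0^t\|\mathcal{S}^*_r\nu\|_{Y^*}\,dr=:C_2(t)\,\|\lambda_0\|_{Y^*} ,
\]
where every quantity entering $C_2$ is finite and locally bounded in $t$ (the operator norm of $\mathcal{S}^*$ by strong continuity, $\|R_w\|_{L^1}$ by local integrability of the resolvent, and $\int_0^t\|\mathcal{S}^*_r\nu\|_{Y^*}\,dr$ by the first paragraph); since $C_2$ is thus bounded on $[0,\varepsilon]$ for any $\varepsilon>0$, squaring and using $\rho=1+\|\cdot\|^2_{Y^*}$ gives $\rho(\lambda_t)\le C\,\rho(\lambda_0)$ for $t\in[0,\varepsilon]$ with $C:=\max\big(1,\sup_{t\in[0,\varepsilon]}C_2(t)^2\big)<\infty$.

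It remains to check weak-$*$ continuity of $\lambda_0\mapsto\lambda_t$. Fixing $t>0$ and $y\in Y$ and applying $\langle y,\cdot\rangle$ to the definition of $\lambda_t$ yields $\langle y,\lambda_t\rangle=\langle y,\mathcal{S}^*_t\lambda_0\rangle-w\int_0^t\langle y,\mathcal{S}^*_{t-s}\nu\rangle\,u_s\,ds$, in which $\lambda_0$ enters $u$ only through the function $s\mapsto h(s)=\langle g,\mathcal{S}^*_s\lambda_0\rangle$ via the bounded affine map $h\mapsto u=h-R_w*h$. Each $h(s)$, as well as $\langle y,\mathcal{S}^*_t\lambda_0\rangle$, is a weak-$*$-continuous function of $\lambda_0$ by Assumption~\ref{ass:weak_existence}(v). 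Along a weak-$*$ convergent sequence $\lambda_0^{(k)}\to\lambda_0$ one has $\sup_k\|\lambda_0^{(k)}\|_{Y^*}<\infty$ by the uniform boundedness principle, so $h^{(k)}(s)\to h(s)$ pointwise in $s$ with a uniform dominating constant; two applications of dominated convergence --- using that both $R_w$ and $s\mapsto\langle y,\mathcal{S}^*_{t-s}\nu\rangle$ lie in $L^1[0,t]$ --- yield $u^{(k)}_s\to u_s$ pointwise (again uniformly bounded) and then $\langle y,\lambda_t^{(k)}\rangle\to\langle y,\lambda_t\rangle$. As $y\in Y$ is arbitrary, $\lambda_t^{(k)}\to\lambda_t$ weak-$*$; the identical computation works for nets once one restricts to a norm-bounded (equivalently $\rho$-bounded, hence weak-$*$-compact) subset of $Y^*$, which is the form in which the statement is used later. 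The proof is otherwise routine; the only real friction is exactly this step --- keeping the uniform norm control that dominated convergence requires when passing to weak-$*$ limits --- together with the integrable singularity of $r\mapsto\mathcal{S}^*_r\nu$ at $r=0$.
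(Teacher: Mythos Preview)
Your proof is correct and takes a genuinely different, somewhat more elementary route than the paper's. The paper runs a Picard iteration directly on the $Y^*$-valued equation and then obtains the $\rho$-bound by squaring the mild formulation and invoking a generalized Gronwall lemma for Volterra kernels (following \cite{AbiLarPul:17} and \cite[Lemma~9.8.2]{gri:90}). You instead first project to the scalar Volterra equation for $u_t=\langle g,\lambda_t\rangle$, solve that via the resolvent $R_w$, and then reconstruct $\lambda_t$ by a single Bochner integral; this immediately gives the linear bound $\|\lambda_t\|_{Y^*}\le C_2(t)\|\lambda_0\|_{Y^*}$ without any Gronwall machinery, and uniqueness drops out because the scalar equation has a unique solution. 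Your approach is more explicit and arguably cleaner; the paper's has the advantage that the same Gronwall template is reused verbatim in the later stochastic estimates (Propositions~\ref{prop:epsjumps} and \ref{prop:jumpseps0}).

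On weak-$*$-continuity both arguments are essentially the same in spirit --- use the resolvent representation \eqref{eq:Volterra_det} and the assumed weak-$*$-continuity of each $\mathcal{S}^*_s$ --- but yours is more carefully written. Your caveat at the end is well placed: the dominated convergence step needs a uniform norm bound, which the uniform boundedness principle supplies for sequences but not for arbitrary nets. Strictly speaking this gives weak-$*$-continuity on norm-bounded sets rather than globally; the paper's one-line argument glosses over the same point. Since the only use of this continuity is to land in $\mathcal{B}^\rho(\mathcal{E}^w)$ via Theorem~\ref{theorem:boundedweakcontapprox} --- i.e.\ continuity of $f|_{K_R}$ on the weak-$*$-compact, norm-bounded sets $K_R$ --- what you prove is exactly what is needed. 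If you want the unqualified statement, note that $\lambda_0\mapsto\lambda_t$ is bounded \emph{linear}, so weak-$*$-continuity is equivalent to each functional $\lambda_0\mapsto\langle y,\lambda_t\rangle$ lying in (the canonical image of) $Y$; one can read this off from your formula once one checks that the relevant $Y$-valued integrals of $s\mapsto \mathcal{S}_s g$ make sense, which is immediate when $Y$ is separable (as in both concrete settings of Section~\ref{sec:concrete}).
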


\begin{remark}
  The unique mild solution of Equation \eqref{eq:lambda_abstract}
  satisfies
  \[
    \lambda_t = \mathcal{S}^*_t \lambda_0 - w \int_0^t
    \mathcal{S}^*_{t-s} \nu \langle g, \lambda_s \rangle ds
  \]
  for all $ t \geq 0 $. Pairing with $ g \in Y $ yields a
  deterministic linear Volterra equation of the form
  \begin{equation}\label{eq:detVolt}
    \begin{split}
      \langle g, \lambda_t \rangle = \langle g, \mathcal{S}_t^{*}
      \lambda_0 \rangle - w \int_0^t \langle g, \mathcal{S}_{t-s}^{*}
      \nu \rangle
      \langle g, \lambda_s \rangle ds \\
      =\langle g, \mathcal{S}_t^{*} \lambda_0 \rangle - w \int_0^t
      K(t-s) \langle g, \lambda_s \rangle ds,
    \end{split}
  \end{equation}
  where we used \eqref{eq:kernel}.
\end{remark}

\begin{proof}
  We prove first the completely standard convergence of the Picard
  iteration scheme with respect to the dual norm on $ Y^* $. Define
  \begin{align*}
    \lambda^0_t&= \lambda_0,\\
    \lambda^{n+1}_t&= \mathcal{S}^*_t \lambda_0 - w \int_0^t  \mathcal{S}^*_{t-s}  \nu \langle g , \lambda^n_s \rangle ds, \quad n \geq 0.
  \end{align*}
  Then, by Assumption \ref{ass:semigroup} (i) each $\lambda^n_t$ lies
  $Y^*$. Consider now
  \begin{align*}
    \|\lambda^{n+1}_t - \lambda^{n}_t\|_{Y^*} &= w\|\int_0^t  \mathcal{S}^*_{t-s}  \nu \langle g , \lambda^n_s - \lambda^{n-1}_s\rangle ds \|_{Y^*}\\
                                              & \leq w \int_0^t  \|\mathcal{S}^*_{t-s}  \nu \langle g , \lambda^n_s -\lambda^{n-1}_s  \rangle \|_{Y^*}  ds\\
                                              &\leq w \int_0^t  \|\mathcal{S}^*_{t-s}  \nu\|_{Y^*}  \|g\|_Y \|\lambda^n_s -\lambda^{n-1}_s  \|_{Y^*}   ds.
  \end{align*}
  Assumption \ref{ass:semigroup} (ii) and an extended version of
  Gronwall's inequality see \cite[Lemma 15]{D:99} then yield
  convergence of $(\lambda^n_t)_{n \in \mathbb{N}}$ to some
  $\lambda_t$ with respect to the dual norm $\|\cdot\|_{Y^*}$
  uniformly in $t$ on compact intervals. For details on strongly
  continuous semigroups and mild solutions see \cite{paz:83}.

  Having established the existence of a mild solution of
  \eqref{eq:lambda_abstract} in $Y^*$, consider now the linear,
  deterministic Volterra equation \eqref{eq:detVolt}, which can be
  written as in \cite[Section 2]{gri:90}, i.e.
  \begin{align}\label{eq:Volterra_det}
    \langle g, \lambda_t \rangle = \langle g, \mathcal{S}_t^{*} \lambda_0 \rangle - \int_0^t R^w(t-s) \langle g,\mathcal{S}_s^{*} \lambda_0\rangle ds,
  \end{align}
  where $R^w$ denotes the resolvent of
  $ w K(t)= w \langle g, S_{t}^{*} \nu \rangle$ as introduced in
  \eqref{eq:resolvent}. Since by assumption $ \mathcal{S}^* $ is a
  weak-$*$-continuous solution operator, the map
  $ \lambda_0 \mapsto (t \mapsto \langle g , \mathcal{S}^*_t \lambda_0
  \rangle ) $ is weak-$*$-continuous as a map from $ Y^* $ to
  $ C(\mathbb{R}_{+},\mathbb{R}) $ (with the topology of uniform
  convergence on compacts on $C(\mathbb{R}_{+},\mathbb{R})$). From
  \eqref{eq:Volterra_det} we thus infer that
  $ \langle g, \lambda_t \rangle $ is weak-$*$-continuous for every
  $ t \geq 0 $, which clearly translates to the solution map of
  Equation \eqref{eq:lambda_abstract}.

  Finally we have to show that the stated inequality for
  $ \rho(\lambda_t) $ holds true on small time intervals
  $ [0,\varepsilon]$. Observe first that for $t \in [0,\varepsilon]$
  \[
    \|\mathcal{S}^*_t \lambda\|_{Y^*}^2 \leq C \|\lambda\|^2_{Y^*}
  \]
  for all $\lambda \in Y^*$ just by the assumption that
  $\mathcal{S}^*_t$ is strongly continuous, for some constant
  $ C \geq 1 $. Furthermore for $t \in [0, \varepsilon]$
  \begin{align*}
    \|\lambda_t\|_{Y^*}^2 &\leq 2( C \|\lambda_0\|^2_{Y^*} + t\int_0^t \| w \mathcal{S}^*_{t-s} \nu \langle g , \lambda_s \rangle \|_{Y^*}^2 ds )\\
                          &\leq 2( C \|\lambda_0\|^2_{Y^*} + \varepsilon \int_0^t w^2 \|\mathcal{S}^*_{t-s} \nu\|^2_{Y^*} \|g\|^2_Y \|\lambda_s \|_{Y^*}^2  ds ).
  \end{align*}
  Consider now the kernel
  $K'(t,s)=2\varepsilon w^2 \|\mathcal{S}^*_{t-s} \nu\|^2_{Y^*}
  \|g\|^2_Y1_{\{s \leq t\}}$. We follow now the arguments of the proof
  of \cite[Lemma 3.1]{AbiLarPul:17}.  Indeed $K'$ is a Volterra kernel
  as defined in \cite[Definition 9.2.1]{gri:90} and for any interval
  $[U,V] \subset \mathbb{R}_+$ we have by Young's convolution
  inequality
  \[
    ||| K'|||_{L^1([U,V])} \leq 2\varepsilon w^2 \|g\|^2_Y
    \int_0^{V-U}\|\mathcal{S}^*_{s} \nu\|^2_{Y^*} ds,
  \]
  where $||| \cdot|||_{L^1([U,V])}$ is defined in \cite[Definition
  9.2.2]{gri:90}. We can now literally take the proof of \cite[Lemma
  3.1]{AbiLarPul:17} to deduce that the generalized Gronwall Lemma
  (see \cite[Lemma 9.8.2]{gri:90}) can be applied. This yields for
  $t \in [0,\varepsilon]$
  \[
    \|\lambda_t\|_{Y^*}^2 \leq \|\lambda_0\|^2_{Y^*} 2C(1 - \int_0^t
    R'(s) ds) \leq \|\lambda_0\|^2_{Y^*} 2C(1 - \int_0^{\varepsilon}
    R'(s) ds),
  \]
  where $R'$ denotes the resolvent of $-K'$, which is
  nonpositive. This leads to the desired assertion due to the
  definition of $\rho$. From this inequality also uniqueness follows
  in a standard way.
\end{proof}

It is of crucial importance to understand that there is actually a
closed sub-cone $ \mathcal{E} \subset Y^* $ left invariant by the
solution of Equation \eqref{eq:lambda_abstract}. This cone will play
the role as announced in Assumption \ref{ass:weak_existence} (iii) and
can be described in terms of initial values $\lambda_0$ which give
rise to nonnegative solutions of \eqref{eq:Volterra_det}. Indeed, it
will be the intersection of the following cones. Define for fixed
$ w > 0 $
\begin{equation}
  \begin{split}\label{eq:statespaceEw}
    \mathcal{E}^w:=& \{ \lambda_0 \in Y^* |  \langle g, \mathcal{S}_t^{*} \lambda_0 \rangle -  \int_0^t R^w(t-s) \langle g, \mathcal{S}_s^{*} \lambda_0\rangle ds \geq 0 \text{ for all }  t \geq 0  \}   \\
    = & \{ \lambda_0 \in Y^* | \langle g, \lambda_t \rangle \geq 0
    \text{ with } \lambda_t = \mathcal{S}^*_t \lambda_0 - w \int_0^t
    \mathcal{S}^*_{t-s} \nu \langle g, \lambda_s \rangle ds \text{ for
      all } t \geq 0 \},
  \end{split}
\end{equation}
where $R^w$ denotes the resolvent of
$ w K(t)= w \langle g, S_{t}^{*} \nu \rangle$.

\begin{proposition}\label{prop:invariance}
  Let Assumption \ref{ass:semigroup} be in force and let $ w > 0 $ be
  fixed. The set $\mathcal{E}^w$ as defined in \eqref{eq:statespaceEw}
  is a weak-$*$-closed convex cone.  The solution of
  \eqref{eq:lambda_abstract} leaves $ \mathcal{E}^w $ invariant and
  defines a generalized Feller semigroup on
  $\mathcal{B}^\rho(\mathcal{E}^w)$ by
  $ P_t f(\lambda_0) := f(\lambda_t) $ for all
  $ f \in \mathcal{B}^\rho(\mathcal{E}^w) $ and $ t \geq 0 $.
\end{proposition}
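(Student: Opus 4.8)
The plan is to verify the three claims in turn: (a) $\mathcal{E}^w$ is a weak-$*$-closed convex cone; (b) the mild solution of \eqref{eq:lambda_abstract} leaves $\mathcal{E}^w$ invariant; (c) the resulting family $P_t f(\lambda_0) := f(\lambda_t)$ is a generalized Feller semigroup on $(\mathcal{E}^w,\rho)$. For (a), convexity and the cone property are immediate from the linearity of the map $\lambda_0 \mapsto \langle g, \lambda_t\rangle$ in $\lambda_0$ (via the resolvent formula \eqref{eq:Volterra_det}, which shows the defining condition is a family of nonnegativity constraints on linear functionals of $\lambda_0$). Weak-$*$-closedness follows because, by Proposition \ref{prop:existenceY*}, for each fixed $t$ the functional $\lambda_0 \mapsto \langle g, \mathcal{S}^*_t\lambda_0\rangle - \int_0^t R^w(t-s)\langle g, \mathcal{S}^*_s\lambda_0\rangle\, ds$ is weak-$*$-continuous on $Y^*$, so $\mathcal{E}^w$ is an intersection (over $t\ge 0$) of weak-$*$-closed half-spaces.

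For (b), invariance is essentially the semigroup/Volterra flow property: if $\lambda_0 \in \mathcal{E}^w$ then one must check that $\langle g, \lambda_{t+s}\rangle \ge 0$ for all $s \ge 0$, knowing $\langle g,\lambda_u\rangle \ge 0$ for all $u \ge 0$. I would exploit that $(\lambda_t)$ solves the mild equation, hence $\langle g,\lambda_t\rangle$ solves the scalar linear Volterra equation \eqref{eq:detVolt}; shifting the time origin to $s$ and using uniqueness of the Volterra resolvent, the function $t \mapsto \langle g,\lambda_{s+t}\rangle$ is again the solution of a Volterra equation of the same form but with initial data $\lambda_s$ in place of $\lambda_0$ — so membership of $\lambda_s$ in $\mathcal{E}^w$ reduces to nonnegativity of $\langle g,\lambda_{s+\cdot}\rangle$, which is already known. (One must be a little careful to track the inhomogeneous term $\langle g,\mathcal{S}^*_{s+t}\lambda_0\rangle$ and rewrite it using $\mathcal{S}^*_{s+t} = \mathcal{S}^*_t\mathcal{S}^*_s$, so that the shifted equation is genuinely driven by $\mathcal{S}^*_t\lambda_s$.)

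For (c), I would invoke Theorem \ref{theorem:strongcontprocess}: it suffices to check that the deterministic Markov process $\lambda_t$ on the weighted space $(\mathcal{E}^w,\rho)$ satisfies Assumptions \ref{ass:generic}. The bound \eqref{eq:markovpsibound}, i.e.\ $\rho(\lambda_t)\le C\rho(\lambda_0)$ on $[0,\varepsilon]$, is exactly the last conclusion of Proposition \ref{prop:existenceY*}. The pointwise-continuity condition \eqref{eq:contint} follows from strong continuity of $\mathcal{S}^*$ in $Y^*$-norm together with the Picard/variation-of-constants representation, which gives $\|\lambda_t - \lambda_0\|_{Y^*} \to 0$ as $t\to 0$, and hence $f(\lambda_t)\to f(\lambda_0)$ for $f$ in $\mathrm{C}_b$, extended to all of $\mathcal{B}^\rho(\mathcal{E}^w)$ by the uniform bound \eqref{eq:markovpsibound} and a density/three-$\varepsilon$ argument. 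The third condition — that $\lambda_0 \mapsto f(\lambda_t)$ lies in $\mathcal{B}^\rho(\mathcal{E}^w)$ for $f$ in a dense subset — is where the weak-$*$-continuity of the solution map (Proposition \ref{prop:existenceY*}) is used: taking $f$ a bounded weak-$*$-continuous cylinder function, $\lambda_0 \mapsto f(\lambda_t)$ is weak-$*$-continuous, bounded on $\rho$-balls, and one checks the decay condition \eqref{eq:Bdecay} using the $\rho$-bound; Theorem \ref{theorem:boundedweakcontapprox} then places it in $\mathcal{B}^\rho(\mathcal{E}^w)$, and these cylinder functions are dense. The main obstacle I expect is the careful bookkeeping in step (b): making the time-shift argument rigorous at the level of the $Y^*$-valued mild solution (rather than only for the scalar quantity $\langle g,\lambda_t\rangle$) and correctly handling the resolvent identity so that the shifted equation has precisely the form defining $\mathcal{E}^w$.
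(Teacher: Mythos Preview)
Your proposal is correct and follows essentially the same route as the paper: part (a) via weak-$*$-continuity of $\mathcal{S}^*$ and linearity, part (c) by verifying Assumption~\ref{ass:generic} and invoking Theorem~\ref{theorem:strongcontprocess}, using Proposition~\ref{prop:existenceY*} for the $\rho$-bound and weak-$*$-continuity, and Theorem~\ref{theorem:boundedweakcontapprox} for density of cylinder functions. The only difference is in (b): the paper dispatches invariance in one line by appealing directly to uniqueness of the mild solution on $Y^*$ (the flow property $\lambda_{s+t}$ equals the solution started at $\lambda_s$ run for time $t$), which immediately gives $\langle g,\lambda_{s+\cdot}\rangle\ge 0$ and hence $\lambda_s\in\mathcal{E}^w$; this sidesteps the resolvent bookkeeping you flag as the main obstacle, and is in fact the cleaner variant you yourself mention at the end.
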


\begin{proof}
  The weak-$*$-closedness follows from the weak-$*$-continuity of
  $\mathcal{S}^*$ and the convex cone property is obvious. Let us now
  prove the invariance of $ \mathcal{E}^w $. Let
  $ \lambda_0 \in \mathcal{E}^w $.  We now have to prove that the for
  each $t > 0$ the unique solution of \eqref{eq:lambda_abstract} given
  by
  \[
    \lambda_t = \mathcal{S}^*_t \lambda_0 - w \int_0^t
    \mathcal{S}^*_{t-s} \nu \langle g, \lambda_s \rangle ds
  \]
  lies in $\mathcal{E}^w$ as well. By Definiton of $\mathcal{E}^w$,
  this means to show that $\langle g, \lambda_{t+u} \rangle \geq 0$
  for all $u \geq 0$ where
  \[
    \lambda_{t+u} = \mathcal{S}^*_u \lambda_t - w \int_0^u
    \mathcal{S}^*_{u-s} \nu \langle g, \lambda_{t+s} \rangle ds.
  \]
  This is equal to
  \[
    \lambda_{t+u}=\mathcal{S}^*_{t+u} \lambda_0 - w \int_0^{t+u}
    \mathcal{S}^*_{t+u-s} \nu \langle g, \lambda_{s} \rangle ds.
  \]
  Since $\lambda_0 \in \mathcal{E}^w$, we thus have
  $\langle g, \lambda_{t+u} \rangle \geq 0$ for all $u \geq 0$ and
  thus $\lambda_t \in \mathcal{E}^w$, proving the invariance.  Since
  by Proposition \ref{prop:existenceY*} the solution operator is
  weak-$*$-continuous, we can conclude that
  $\lambda_0 \mapsto f(\lambda_t)$ lies in
  $ \mathcal{B}^\rho(\mathcal{E}^w) $ for a dense set of
  $ \mathcal{B}^\rho(\mathcal{E}^w) $ by Theorem
  \ref{theorem:boundedweakcontapprox}. Moreover, it satisfies the
  necessary bound \eqref{eq:markovpsibound} for $ \rho $ and
  \eqref{eq:contint} is satisfied by (norm)-continuity of
  $t \mapsto \lambda_t$. Hence all the conditions of Assumption
  \ref{ass:generic} are satisfied and the solution operator therefore
  defines a generalized Feller semigroup $ (P_t) $ on
  $ \mathcal{B}^\rho(\mathcal{E}^w) $ by Theorem
  \ref{theorem:strongcontprocess}.
\end{proof}

We need an additional assumption assuring that the above defined state
space contains the cone hull of $ \mathcal{S}^*_t \nu $:
\begin{assumption}\label{ass:crucial_abstract}
  Let $ \nu $ be such that $ \mathcal{S}^*_u \nu \in \mathcal{E}^w $
  for all $ u > 0 $ and for all $ w > 0 $.
\end{assumption}

\begin{remark}\label{rem:crucial_abstract}
  This condition is satisfied if $K$ and $R^w$ are nonnegative for all
  $w >0$.  Indeed, $ \mathcal{S}^*_u \nu \in \mathcal{E}^w$ if and
  only if
  \begin{align}\label{eq:condres}
    &\langle g, \mathcal{S}_{t+u}^*\nu \rangle -\int_0^t R^w(t-s)\langle g, \mathcal{S}_{s+u}^*\nu \rangle ds =K(t+u) -\int_0^t R^w(t-s) K(s+u) ds \geq 0.
  \end{align}
  Since by the resolvent equation we have
  \begin{align*}
    R^w(t+u)&=w(K(t+u)-\int_0^{t+u} R^w(t+u-s)K(s)ds) \\
            &=w(K(t+u)-\int_0^t R^w(t-s)K(s+u)ds - \int_0^u R^w(t+u-s)K(s)ds),
  \end{align*}
  it follows that
  \[
    K(t+u) -\int_0^t R^w(t-s) K(s+u) ds= \frac{1}{w}R^w(t+u)+ \int_0^u
    R^w(t+u-s)K(s)ds.
  \]
  If $K$ and $R^w$ are nonnegative \eqref{eq:condres} is clearly
  satisfied.
\end{remark}

The following lemma states that the scale of invariant spaces
$ \mathcal{E}^w $ is actually decreasing.
\begin{lemma} Let Assumptions \ref{ass:semigroup} and
  \ref{ass:crucial_abstract} be in force.  For $ w_1 < w_2 $ we have
  $ \mathcal{E}^{w_1} \supseteq \mathcal{E}^{w_2} $. Additionally we
  have that for $ \lambda_0 \in \mathcal{E}^{w_2} $ the unique
  solution of
  \[
    \lambda_t = \mathcal{S}^*_t \lambda_0 - w_1 \int_0^t
    \mathcal{S}^*_{t-s} \nu \langle g, \lambda_s \rangle ds
  \]
  for all $ t \geq 0 $ actually lies in $ \mathcal{E}^{w_2} $.
\end{lemma}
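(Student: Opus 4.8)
The plan is to pair everything with $g$ and reduce to the scalar linear Volterra equation. For $\lambda_0\in Y^*$ and $w>0$ write $\lambda^{(w)}$ for the mild solution of \eqref{eq:lambda_abstract} (Proposition~\ref{prop:existenceY*}) and $p^{(w)}(t):=\langle g,\lambda^{(w)}_t\rangle$; by \eqref{eq:detVolt}--\eqref{eq:Volterra_det} this is the unique continuous solution of $p^{(w)}=h-wK\ast p^{(w)}=h-R^w\ast h$, with $h(t):=\langle g,\mathcal S^*_t\lambda_0\rangle$ and $R^w$ the resolvent of $wK$, and by \eqref{eq:statespaceEw} we have $\lambda_0\in\mathcal E^w\iff p^{(w)}\ge 0$ on $[0,\infty)$. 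I will repeatedly use the resolvent identity $K-R^w\ast K=\tfrac1w R^w$, which follows from \eqref{eq:resolvent}. A preliminary observation is that $R^w\ge 0$ for every $w>0$: applying the definition of $\mathcal E^w$ to the elements $\mathcal S^*_u\nu\in Y^*$, which lie in $\mathcal E^w$ by Assumption~\ref{ass:crucial_abstract}, gives $K(\cdot+u)-R^w\ast K(\cdot+u)\ge 0$ for all $u>0$, and letting $u\downarrow 0$ (using $L^2_{\mathrm{loc}}$-continuity of translation and the resolvent identity) yields $R^w\ge 0$ a.e.\ --- this is exactly the computation of Remark~\ref{rem:crucial_abstract} read in the limit $u\to 0$.

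For the inclusion $\mathcal E^{w_1}\supseteq\mathcal E^{w_2}$ with $w_1<w_2$, fix $\lambda_0\in\mathcal E^{w_2}$, so $p^{(w_2)}\ge 0$. Writing $-w_1=-w_2+(w_2-w_1)$ in the mild equation exhibits $\lambda^{(w_1)}$ as the mild solution of the $w_2$-equation with the extra inhomogeneity $(w_2-w_1)\int_0^t\mathcal S^*_{t-s}\nu\,p^{(w_1)}(s)\,ds$; the variation-of-constants formula, paired with $g$ and simplified by the resolvent identity, then gives the scalar identity
\[
  p^{(w_1)} \;=\; p^{(w_2)} + \frac{w_2-w_1}{w_2}\,R^{w_2}\ast p^{(w_1)} .
\]
This is a linear Volterra equation for $p^{(w_1)}$ with nonnegative kernel and nonnegative forcing, hence $p^{(w_1)}\ge 0$ (e.g.\ by the Neumann series, or the generalized Gronwall lemma \cite[Lemma~9.8.2]{gri:90}), so $\lambda_0\in\mathcal E^{w_1}$.

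For the last assertion, fix $t\ge 0$, $\lambda_0\in\mathcal E^{w_2}$, and let $(\mu_\tau)_{\tau\ge 0}$ be the $w_2$-solution started from $\lambda^{(w_1)}_t$; we must show $G(\tau):=\langle g,\mu_\tau\rangle\ge 0$ for all $\tau$. Since $\mathcal S^*_\tau\lambda^{(w_1)}_t=\lambda^{(w_1)}_{t+\tau}+w_1\int_0^\tau\mathcal S^*_{\tau-r}\nu\,p^{(w_1)}(t+r)\,dr$ (immediate from the mild equation for $\lambda^{(w_1)}$), inserting this into the $w_2$-mild equation for $\mu$ and pairing with $g$ gives, with $\tilde p(\tau):=p^{(w_1)}(t+\tau)$, the equation $G+w_2K\ast G=\tilde p+w_1K\ast\tilde p$, whence $G=\tilde p-\tfrac{w_2-w_1}{w_2}R^{w_2}\ast\tilde p$. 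On the other hand, evaluating the identity $p^{(w_1)}=p^{(w_2)}+\tfrac{w_2-w_1}{w_2}R^{w_2}\ast p^{(w_1)}$ at time $t+\tau$ and splitting $\int_0^{t+\tau}=\int_0^t+\int_t^{t+\tau}$, where the second piece equals exactly $\tfrac{w_2-w_1}{w_2}(R^{w_2}\ast\tilde p)(\tau)$, produces the manifestly nonnegative representation
\[
  G(\tau) \;=\; p^{(w_2)}(t+\tau) + \frac{w_2-w_1}{w_2}\int_0^t R^{w_2}(t+\tau-s)\,p^{(w_1)}(s)\,ds \;\ge\; 0,
\]
since $p^{(w_2)}\ge 0$, $R^{w_2}\ge 0$ and $p^{(w_1)}\ge 0$ by the two previous steps; hence $\lambda^{(w_1)}_t\in\mathcal E^{w_2}$.

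I expect the main obstacle to be the bookkeeping in this last step: the naive attempt to split $\lambda^{(w_1)}_t$ as the $w_2$-evolution of $\lambda_0$ plus a remainder fails, because that remainder need not lie in the cone $\mathcal E^{w_2}$ (equivalently, the formula $G=\tilde p-\tfrac{w_2-w_1}{w_2}R^{w_2}\ast\tilde p$ is not visibly nonnegative), so one has to spot the cancellation that rewrites $G$ in the nonnegative form above by feeding in the scalar identity for $p^{(w_1)}$ at the shifted argument. A secondary technical point is that $K$ and $R^w$ only lie in $L^2_{\mathrm{loc}}$, so all convolutions, the limit $u\downarrow 0$, and the various ``$\ge 0$ for all $t$'' statements must be handled through $L^2_{\mathrm{loc}}$-convergence and the continuous representatives of the convolution terms.
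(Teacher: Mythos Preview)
Your proof is correct and takes a genuinely different route from the paper's. The paper works directly in $Y^*$: it decomposes the $w_1$-solution as
\[
\lambda_t \;=\; \Big(\mathcal{S}^*_t\lambda_0 - w_2\!\int_0^t\!\mathcal{S}^*_{t-s}\nu\,\langle g,\lambda_s\rangle\,ds\Big) \;+\; (w_2-w_1)\!\int_0^t\!\mathcal{S}^*_{t-s}\nu\,\langle g,\lambda_s\rangle\,ds,
\]
places the first bracket in $\mathcal{E}^{w_2}$ by Proposition~\ref{prop:invariance} and the second by Assumption~\ref{ass:crucial_abstract} together with the cone property, and so obtains both conclusions at once. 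You instead project to the scalar Volterra level, first extract $R^w\ge 0$ as a \emph{consequence} of Assumption~\ref{ass:crucial_abstract} (the paper only records the reverse implication, in Remark~\ref{rem:crucial_abstract}), and then derive the explicit nonnegative representations $p^{(w_1)}=p^{(w_2)}+\tfrac{w_2-w_1}{w_2}R^{w_2}\ast p^{(w_1)}$ and your closed formula for $G$. Your route yields concrete formulas and a cleaner logical order---you establish $p^{(w_1)}\ge 0$ before ever using it---whereas the paper's argument is shorter and purely cone-geometric; note, however, that the first bracket above is the $w_2$-\emph{Picard map} applied to $\lambda^{(w_1)}$ rather than the $w_2$-solution itself, so the appeal to Proposition~\ref{prop:invariance} there really hides an iteration/closedness step that your scalar computation makes explicit.
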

\begin{proof}
  Fix $ w_1 < w_2 $ and $ \lambda_0 \in \mathcal{E}^{w_2} $. Actually
  we can write
  \begin{align*}
    \lambda_t = & \mathcal{S}^*_t \lambda_0 - w_1 \int_0^t   \mathcal{S}^*_{t-s} \nu \langle g, \lambda_s \rangle ds  = \\
    = & \underbrace{\mathcal{S}^*_t \lambda_0 - w_2 \int_0^t   \mathcal{S}^*_{t-s} \nu \langle g, \lambda_s \rangle ds}_{A}   + \underbrace{(w_2-w_1) \int_0^t   \mathcal{S}^*_{t-s} \nu \langle g, \lambda_s \rangle ds.}_{B}
  \end{align*}
  By Proposition \ref{prop:invariance}, the term $A$ clearly lies in
  $ \mathcal{E}^{w_2} $. The same holds true for the second term $B$
  due to Assumption \ref{ass:crucial_abstract} and the cone property
  of $\mathcal{E}^{w_2}$, since $(w_2-w_1) >0$ and
  $\langle g, \lambda_s \rangle \geq 0$ for all $s>0$. Thus
  $\lambda_t \in \mathcal{E}^{w_2} $.  However, by definition of
  $ \mathcal{E}^{w_1} $, $\lambda_t$ also lies there, whence the
  conclusion.
\end{proof}

\begin{definition}\label{statespaceE}
  Let Assumptions \ref{ass:semigroup} and \ref{ass:crucial_abstract}
  be in force.  Then we define the following weak-$*$-closed cone in
  $Y^*$
  \[
    \mathcal{E} = \cap_{w > 0} \mathcal{E}^w \, .
  \]
\end{definition}

\begin{theorem}\label{th:existenceintersect}
  Let Assumptions \ref{ass:semigroup} and \ref{ass:crucial_abstract}
  be in force. Then the weak-$*$-closed cone $ \mathcal{E} $ contains
  all damped measures $ \mathcal{S}^*_t \nu $ for $ t > 0 $ and is
  left invariant by the solution of
  \begin{align} \label{eq:lambdadet1} d \lambda_t = \mathcal{A}^*
    \lambda_t dt - w \nu \langle g, \lambda_t\rangle dt
  \end{align}
  for all $ w > 0 $. This solution defines a generalized Feller
  semigroup on $ \mathcal{B}^\rho(\mathcal{E}) $.
\end{theorem}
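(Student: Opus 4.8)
The plan is to assemble the statement entirely from the preceding results: the containment is essentially a restatement of Assumption~\ref{ass:crucial_abstract}, and the invariance is a combination of Proposition~\ref{prop:invariance} with the monotonicity Lemma preceding Definition~\ref{statespaceE}. First I would record the containment $\mathcal{S}^*_t\nu \in \mathcal{E}$ for $t>0$: by Assumption~\ref{ass:crucial_abstract} one has $\mathcal{S}^*_t\nu \in \mathcal{E}^w$ for every $w>0$, hence $\mathcal{S}^*_t\nu \in \bigcap_{w>0}\mathcal{E}^w = \mathcal{E}$.

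Next I would prove invariance of $\mathcal{E}$ under the flow of \eqref{eq:lambdadet1} for a fixed $w>0$. Let $\lambda_0 \in \mathcal{E}$ and let $(\lambda_t)_{t\geq 0}$ be the corresponding mild solution, which exists and is unique by Proposition~\ref{prop:existenceY*}. I would split according to the value of the auxiliary parameter $w'$. Since $\lambda_0 \in \mathcal{E}^w$, Proposition~\ref{prop:invariance} gives $\lambda_t \in \mathcal{E}^w$ for all $t\geq 0$, and by the nesting $\mathcal{E}^w \subseteq \mathcal{E}^{w'}$ for $w' \leq w$ (the Lemma) this already yields $\lambda_t \in \mathcal{E}^{w'}$ for every $w' \leq w$. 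For $w'>w$ one uses that $\lambda_0 \in \mathcal{E} \subseteq \mathcal{E}^{w'}$ and invokes the Lemma with the pair $w<w'$, whose conclusion is precisely that the solution of $d\lambda_t = \mathcal{A}^*\lambda_t\,dt - w\nu\langle g,\lambda_t\rangle\,dt$, i.e.\ our $(\lambda_t)$, stays in $\mathcal{E}^{w'}$. Combining the two cases, $\lambda_t \in \bigcap_{w'>0}\mathcal{E}^{w'} = \mathcal{E}$ for all $t\geq 0$.

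Finally I would establish the generalized Feller property. Since $\mathcal{E}$ is weak-$*$-closed in $Y^*$, the restriction of $\rho$ makes $(\mathcal{E},\rho)$ a weighted space (norm balls in $Y^*$ are weak-$*$-compact by Banach--Alaoglu). By Proposition~\ref{prop:existenceY*} the solution map $\lambda_0 \mapsto \lambda_t$ is weak-$*$-continuous and satisfies $\rho(\lambda_t) \leq C\rho(\lambda_0)$ on $[0,\varepsilon]$; restricting to $\mathcal{E}$ I would repeat verbatim the verification from the proof of Proposition~\ref{prop:invariance}, checking Assumption~\ref{ass:generic} for the deterministic Markov process $(\lambda_t)$ on $\mathcal{E}$: \eqref{eq:markovpsibound} is the displayed $\rho$-bound, \eqref{eq:contint} follows from norm-continuity of $t\mapsto\lambda_t$, and property (iii) holds because, the solution mapping $K_R$ into $K_{CR}$, the map $\lambda_0\mapsto f(\lambda_t)$ is weak-$*$-continuous on each $K_R$ for $f$ in the dense class of Theorem~\ref{theorem:boundedweakcontapprox}. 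Theorem~\ref{theorem:strongcontprocess} then gives that $P_tf(\lambda_0):=f(\lambda_t)$ is a generalized Feller semigroup on $\mathcal{B}^\rho(\mathcal{E})$.

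The proof is mostly bookkeeping; the one place requiring a little care is the invariance step, where the two regimes $w'\leq w$ and $w'>w$ must be handled separately --- the first by the plain nesting of the $\mathcal{E}^{w'}$ together with Proposition~\ref{prop:invariance}, the second by the sharper statement of the Lemma that the flow with the \emph{smaller} parameter preserves the space attached to the \emph{larger} one. Beyond the estimates already in place, nothing hard intervenes.
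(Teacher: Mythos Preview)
Your proof is correct and follows the same approach as the paper's own proof, which is a terse two-sentence appeal to Assumption~\ref{ass:crucial_abstract}, the monotonicity Lemma, and Proposition~\ref{prop:invariance}. You have simply unpacked the argument in more detail, in particular making explicit the case split $w'\leq w$ versus $w'>w$ that the paper leaves implicit in the phrase ``by monotonicity of $\mathcal{E}^w$ and the fact that \ldots solutions with respect to $w_1$-equations actually lie in $\mathcal{E}^{w_2}$''.
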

\begin{proof}
  The first assertion follows from Assumption
  \ref{ass:crucial_abstract}.  By monotonicity of $ \mathcal{E}^w $
  and the fact that solutions with respect to $ w_1 $-equations
  actually lie in $ \mathcal{E}^{w_2} $ as stated in the previous
  lemma, the invariance assertion and generalized Feller property
  follow together with Proposition \ref{prop:invariance}.
\end{proof}

By the previous results we can now construct a generalized Feller
process on $ \mathcal{E} $ which jumps up by multiples of
$ \mathcal{S}^*_{\varepsilon}\nu $ for some $\varepsilon \geq 0$ and
with an instantaneous intensity of size
$ \langle g, \lambda_t\rangle $.  We formulate these assertions in two
propositions. For their formulation recall that
$\mathcal{E}_* \subset Y$ denotes the (pre-)polar cone of
$ \mathcal{E}$. Define also the following set
\begin{align}\label{eq:mathcalD}
  \mathcal{D}=\{ y \in  Y \, |\, y \in \operatorname{dom}(\mathcal{A})~\text{ s.t. } \langle y, \nu \rangle \text{ is well-defined}\}.
\end{align}

\begin{proposition}\label{prop:epsjumps}
  Let Assumptions \ref{ass:semigroup} and \ref{ass:crucial_abstract}
  be in force. Moreover, let $\mu \in \mathcal{M}_+(\mathbb{R}_{+})$
  with finite second moment.  Consider the SPDE
  \begin{align}\label{eq:SPDEbase}
    d \lambda_t & = \mathcal{A}^* \lambda_t  dt - w \nu  \langle g, \lambda_t \rangle dt + \mathcal{S}^*_\varepsilon \nu dN_t ,
  \end{align}
  where $(N_t)_{t \geq 0}$ is a pure jump process with compensator
  $F(\lambda_t, d\xi)=\langle g,\lambda_t\rangle \mu(d\xi)$.
  \begin{enumerate}
  \item Then for every $ \lambda_0 \in \mathcal{E} $,
    $ \varepsilon > 0 $ and $ w > 0 $, the SPDE \eqref{eq:SPDEbase}
    has a solution in $ \mathcal{E} $ given by a generalized Feller
    process associated to the generator of \eqref{eq:SPDEbase}.
  \item This generalized Feller process is \emph{also} a
    probabilistically weak and analytically mild solution of
    \eqref{eq:SPDEbase}, i.e.
    \[
      \lambda_t = \mathcal{S}^*_t \lambda_0 -\int_0^t
      \mathcal{S}^*_{t-s} w \nu \langle g, \lambda_s \rangle ds +
      \int_0^t\mathcal{S}^*_{t-s+\varepsilon} \nu dN_s \, ,
    \]
    which justifies Equation \eqref{eq:SPDEbase}, in particular for
    every initial value the process $ N $ can be constructed on an
    appropriate probabilistic basis. The stochastic integral is
    defined in a pathwise way along finite variation paths. Moreover,
    for every family $(f_n)_n \in \mathcal{B}^{\rho}(\mathcal{E})$,
    $t \mapsto f_n(\lambda_t)$ can be chosen to be c\`agl\`ad for all
    $n$.
  \item For every $ \varepsilon > 0 $ and $ w > 0 $, the corresponding
    Riccati equation $\partial_t y_t=R(y_t)$ with
    $R: \mathcal{D} \cap \mathcal{E}_* \to \mathbb{R}$ given by
    \begin{align}\label{eq:Riccatisimpl}
      R(y) = \mathcal{A} y - wg \langle y, \nu\rangle +g\int_{\mathbb{R}_+}
      \left(\exp(  \langle y , \mathcal{S}^*_{\varepsilon} \nu \xi \rangle )-1  \right) \mu(d\xi),
    \end{align}
    admits a unique global solution in the mild sense for all initial
    values $ y_0 \in \mathcal{E}_* $.
  \item The affine transform formula holds true.
    \[
      \mathbb{E}_{\lambda_0}\left[ \exp( \langle y_0, \lambda_t
        \rangle)\right]=\exp(\langle y_t, \lambda_0 \rangle),
    \]
    where $y_t$ solves $\partial_t y_t=R(y_t)$ for all
    $y_0 \in \mathcal{E}_*$ in the mild sense with $R$ given by
    \eqref{eq:Riccatisimpl}. Moreover $y_t \in \mathcal{E}_*$ for all
    $t \geq 0$.
  \end{enumerate}
\end{proposition}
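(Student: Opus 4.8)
The plan is to establish the four assertions of Proposition \ref{prop:epsjumps} in order, building each on the machinery developed in Sections \ref{sec:genFeller}--\ref{sec:markovianlift_abstract}. For assertion (i), I would start from the deterministic generator $\mathcal{A}^* - w\nu\langle g,\cdot\rangle$ whose generalized Feller semigroup on $\mathcal{B}^\rho(\mathcal{E})$ was constructed in Theorem \ref{th:existenceintersect}. On top of this I add the pure jump part $Bf(\lambda) = \langle g,\lambda\rangle\int_{\mathbb{R}_+}(f(\lambda + \mathcal{S}^*_\varepsilon\nu\xi) - f(\lambda))\mu(d\xi)$, i.e.\ with kernel $\mu(\lambda,d\eta)$ the pushforward of $\langle g,\lambda\rangle\mu(d\xi)$ under $\xi\mapsto\mathcal{S}^*_\varepsilon\nu\xi$. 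The key point is to verify that this kernel satisfies the hypotheses \eqref{eq:cond1}--\eqref{eq:cond4} of Proposition \ref{prop:jump_perturbation}; since $\rho(\lambda) = 1 + \|\lambda\|_{Y^*}^2$, the relevant integrals reduce to moment conditions $\int \xi^k \mu(d\xi)$ for $k \leq 2$ weighted against powers of $\langle g,\lambda\rangle \leq \|g\|_Y\|\lambda\|_{Y^*}$, and the assumed finite second moment of $\mu$ together with $\|\mathcal{S}^*_\varepsilon\nu\|_{Y^*} < \infty$ (Assumption \ref{ass:semigroup}(i)) delivers exactly what is needed. One also needs that $\mathcal{E}$ is left invariant, which follows because a jump by $\mathcal{S}^*_\varepsilon\nu\xi \in \mathcal{E}$ (Assumption \ref{ass:crucial_abstract}, scaling by $\xi\geq 0$, and the cone property) keeps the state in the cone; and that the deterministic semigroup leaves $\mathcal{B}^{\sqrt\rho}$ invariant, which should be read off from the linear estimates in the proof of Proposition \ref{prop:existenceY*}. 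Proposition \ref{prop:jump_perturbation} then hands us a generalized Feller semigroup with generator $\mathcal{A}^* - w\nu\langle g,\cdot\rangle + B$, and Theorems \ref{th:kolmogorov_extension} and \ref{th:path_properties} produce the associated c\`agl\`ad process.

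For assertion (ii), I would argue that the generalized Feller process from (i) solves the stated mild integral equation. The cleanest route is via the martingale problem attached to the generator (see the remark following Theorem \ref{th:path_properties}): for each fixed $y \in Y$ the function $f(\lambda) = \langle y, \lambda\rangle$ — or rather a suitable localization of it inside $\mathcal{B}^\rho$ — lies in the domain, and applying the generator together with the variation-of-constants representation of $\mathcal{S}^*$ yields, after pairing with arbitrary $y \in Y$, precisely the weak form of $\lambda_t = \mathcal{S}^*_t\lambda_0 - \int_0^t\mathcal{S}^*_{t-s}w\nu\langle g,\lambda_s\rangle ds + \int_0^t\mathcal{S}^*_{t-s+\varepsilon}\nu\, dN_s$. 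Since $N$ is a finite-activity point process, the stochastic integral is an honest pathwise Lebesgue--Stieltjes integral along the finite-variation paths of $N$, so there are no delicate stochastic-integration issues; the compensator identity $F(\lambda_t,d\xi) = \langle g,\lambda_t\rangle\mu(d\xi)$ is just the jump-intensity encoded in the generator $B$. I would also remark that the construction of $N$ on an appropriate basis is automatic: given the process $\lambda$ from Kolmogorov extension, its jump times and marks define $N$.

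For assertions (iii) and (iv), the strategy is the standard affine duality argument adapted to the pre-dual cone $\mathcal{E}_*$. I would look for exponential-affine functionals $f_y(\lambda) = \exp(\langle y,\lambda\rangle)$ and compute $(\mathcal{A}^* - w\nu\langle g,\cdot\rangle + B)f_y$, which factors as $f_y(\lambda)$ times $\bigl(\langle \mathcal{A}y,\lambda\rangle - w\langle g,\lambda\rangle\langle y,\nu\rangle + \langle g,\lambda\rangle\int_{\mathbb{R}_+}(e^{\langle y,\mathcal{S}^*_\varepsilon\nu\xi\rangle} - 1)\mu(d\xi)\bigr)$; here I use that the pre-adjoint $\mathcal{A}$ acts on $Y$ (Assumption \ref{ass:weak_existence}(vi)) so $\langle \mathcal{A}^*\lambda, y\rangle = \langle\lambda,\mathcal{A}y\rangle$ for $y \in \dom(\mathcal{A})$. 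Recognizing the $\lambda$-linear coefficient as $\langle R(y),\lambda\rangle$ with $R$ given by \eqref{eq:Riccatisimpl} motivates the Riccati ODE $\partial_t y_t = R(y_t)$, and then the affine transform formula $\mathbb{E}_{\lambda_0}[\exp(\langle y_0,\lambda_t\rangle)] = \exp(\langle y_t,\lambda_0\rangle)$ follows by the usual time-reversal / Itô-on-$(s,\lambda_s,y_{t-s})$ computation showing $s\mapsto \exp(\langle y_{t-s},\lambda_s\rangle)$ is a martingale. Global existence of a mild solution to the Riccati equation requires an a priori bound: on $\mathcal{E}_*$ the nonlinearity $\langle y,\mathcal{S}^*_\varepsilon\nu\rangle \leq 0$ (since $\mathcal{S}^*_\varepsilon\nu \in \mathcal{E}$), so $e^{\langle y,\mathcal{S}^*_\varepsilon\nu\xi\rangle} - 1 \leq 0$, and the drift term $-wg\langle y,\nu\rangle$ together with the linear part keeps $\langle y_t,\lambda\rangle \leq 0$ for all $\lambda\in\mathcal{E}$, giving $y_t\in\mathcal{E}_*$ and simultaneously preventing blow-up; invariance of $\mathcal{E}_*$ is proved by a stochastic-invariance / support argument, or directly from the sign structure just described.

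\textbf{Main obstacle.} I expect the hardest part to be the rigorous treatment of assertion (iii)--(iv): establishing that $\mathcal{E}_*$ is invariant under the Riccati flow and that a \emph{global} mild solution exists, because $R$ is only defined and well-behaved on $\mathcal{E}_*$ (outside it the exponential term can explode), so one must simultaneously control the solution's range and its lifespan — a bootstrap that in the affine literature is typically the delicate step. A secondary technical nuisance is justifying the generator computation on exponential functionals, since $f_y$ need not lie in $\mathcal{B}^\rho(\mathcal{E})$ globally and one must work with a localizing sequence or restrict to the dense algebra of cylinder functions and pass to the limit.
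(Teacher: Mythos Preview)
Your treatment of (i) and (ii) matches the paper's approach closely: apply Proposition~\ref{prop:jump_perturbation} on top of the deterministic semigroup from Theorem~\ref{th:existenceintersect}, verifying \eqref{eq:cond1}--\eqref{eq:cond4} via the second moment of $\mu$ and $\|\mathcal{S}^*_\varepsilon\nu\|_{Y^*}<\infty$; then construct $N$ from the martingale associated to linear test functions and pass to the mild form. The paper is slightly more explicit in (ii): it fixes $y\in\operatorname{dom}(\mathcal{A})$ with $\langle y,\mathcal{S}^*_\varepsilon\nu\rangle=1$, defines $N$ from the resulting martingale, and uses the carr\'e du champ to show that the definition is independent of the chosen $y$.

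For (iii)--(iv) your route differs from the paper's and your ``direct sign argument'' for $\mathcal{E}_*$-invariance is not solid. You want to argue that the drift $-wg\langle y,\nu\rangle$ plus the linear part preserves $\langle y_t,\lambda\rangle\le 0$, but $\nu\in Z^*$ need not lie in $\mathcal{E}$, so $\langle y,\nu\rangle$ has no a priori sign and the term $-wg\langle y,\nu\rangle$ can push in either direction; this is exactly the bootstrap you flag, and it does not obviously close. The paper sidesteps it by reversing the logical order: first it obtains \emph{local} mild solutions of the Riccati equation by a plain Picard iteration (no invariance needed). Then, rather than an It\^o/martingale argument, it uses that the generalized Feller semigroup from (i) already solves the abstract Cauchy problem $\partial_t u=Au$, $u(0,\cdot)=\exp(\langle y_0,\cdot\rangle)$ \emph{uniquely}; checking that $u(t,\lambda)=\exp(\langle y_t,\lambda\rangle)$ also solves it (just differentiate and use $A f_{y}=f_{y}\langle R(y),\cdot\rangle$) gives the affine transform formula directly. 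Since the left side $\mathbb{E}_{\lambda_0}[\exp(\langle y_0,\lambda_t\rangle)]\le 1$ for all $\lambda_0\in\mathcal{E}$, the identity forces $\langle y_t,\lambda_0\rangle\le 0$, i.e.\ $y_t\in\mathcal{E}_*$, as an \emph{output}. That invariance in turn gives $|\exp(\langle y_s,\mathcal{S}^*_\varepsilon\nu\xi\rangle)-1|\le 1$, hence a uniform a priori bound on $\|y_t\|_Y$, and global existence follows. So the paper's ordering dissolves the bootstrap you identified. Your secondary worry that $f_y\notin\mathcal{B}^\rho(\mathcal{E})$ is also unfounded: for $y\in\mathcal{E}_*$ one has $\langle y,\lambda\rangle\le 0$ on $\mathcal{E}$, so $f_y$ is bounded by $1$ and lies in $C_b(\mathcal{E})\subset\mathcal{B}^\rho(\mathcal{E})$.
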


\begin{proof}
  To prove the first assertion we apply Proposition
  \ref{prop:jump_perturbation}.  By Theorem
  \ref{th:existenceintersect}, the deterministic equation
  \eqref{eq:lambdadet1} has a mild solution on $\mathcal{E}$ which --
  by Assumption \ref{ass:semigroup} -- defines a generalized Feller
  semigroup $(P_t)_{t\geq 0}$ on $ \mathcal{B}^\rho(\mathcal{E}) $.
  The operator $A$ in Proposition \ref{prop:jump_perturbation} then
  corresponds to the generator of $(P_t)_{t\geq 0}$, i.e.~the
  semigroup associated to the purely deterministic part of
  \eqref{eq:SPDEbase}, which is clearly of transport type and with
  growth bound $M_1 \exp(\omega t) $ for some $M_1\geq 1$ and
  $\omega$.

  Note that by the same arguments as in Proposition
  \ref{prop:invariance} and by applying Theorem
  \ref{theorem:strongcontprocess}, we can prove that
  $(P_t)_{t \geq 0}$ also defines a generalized Feller semigroup on
  $ \mathcal{B}^{\sqrt{\rho}}(\mathcal{E}) $. Indeed, by
  weak-$*$-continuity, we can conclude that
  $\lambda_0 \mapsto f(\lambda_t)$ lies in
  $ \mathcal{B}^{\sqrt{\rho}}(\mathcal{E}) $ for a dense set of
  $ \mathcal{B}^{\sqrt{\rho}}(\mathcal{E})$ by Theorem
  \ref{theorem:boundedweakcontapprox}, whence condition (iii) of
  Assumption \ref{ass:generic} is satisfied. Moreover,
  \eqref{eq:contint} is satisfied by (norm)-continuity of
  $t \mapsto \lambda_t$ and the necessary bound
  \eqref{eq:markovpsibound} holds also for $ \sqrt{\rho} $.  For the
  latter observe first that
  \[
    \| \mathcal{S}^*_t \lambda\|_{Y^*}\leq C \|\lambda\|_{Y^*} \quad
    \text{for all } 0 \leq t \leq T \text{ and } \lambda \in Y^*,
  \]
  since $ \mathcal{S}^* $ is strongly continuous on $ Y^* $, for some
  $ T > 0 $.  Using this we can estimate
  \begin{align}\label{eq:estimatenormlambda}
    \|\lambda_t\|_{Y^*}  \leq 
    C \|\lambda_0\|_{Y^*} + \int_0^t  w \|\mathcal{S}^*_{t-s}\nu\|_{Y^*} \|g\|_Y \|\lambda_s\|_{Y^*}ds.
  \end{align}
  We now proceed similarly as in Proposition \ref{prop:existenceY*}.
  Consider the kernel
  $K'(t,s)=w \|\mathcal{S}^*_{t-s} \nu\|_{Y^*} \|g\|_Y1_{\{s \leq
    t\}}$, which is again a Volterra kernel in the sense of
  \cite[Definition 9.2.1]{gri:90}. Moreover, for any interval
  $[U,V] \subset \mathbb{R}_+$ we have by Young's convolution
  inequality
  \[
    ||| K'|||_{L^1([U,V])} \leq w \|g\|_Y
    \int_0^{V-U}\|\mathcal{S}^*_{s} \nu\|_{Y^*} ds \leq (V-U)w
    \|g\|^2_Y (\int_0^{V-U}\|\mathcal{S}^*_{s} \nu\|^2_{Y^*}
    ds)^{\frac{1}{2}},
  \]
  where $||| \cdot|||_{L^1([U,V])}$ is defined in \cite[Definition
  9.2.2]{gri:90}. We now can again literally take the proof of
  \cite[Lemma 3.1]{AbiLarPul:17} to deduce that the generalized
  Gronwall Lemma (see \cite[Lemma 9.8.2]{gri:90}) can be applied. This
  yields for $t \in [0,T]$
  \[
    \|\lambda_t\|_{Y^*} \leq \|\lambda_0\|_{Y^*} C(1-\int_0^t R'(s)
    ds),
  \]
  where $R'$ denotes the resolvent of $-K'$, which is
  nonpositive. Hence, for $t \in [0,T]$ we have by Jensen's inequality
  \begin{align*}
    \sqrt{\rho(\lambda_t)} =\sqrt{1+ \| \lambda_t\|_{Y^*}^2} \leq 1+ \| \lambda_t\|_{Y^*}\leq 1+ \|\lambda_0\|(C-\int_0^T R'(s) ds)\\
    \leq \tilde C\sqrt{1+\|\lambda_0\|_{Y^*}^2}= \tilde C\sqrt{\rho(\lambda_0)},
  \end{align*}
  where $\tilde C$ depends on $T$.
	
  Finally, we need to verify \eqref{eq:cond1} - \eqref{eq:cond3},
  which read as follows
  \begin{align*}
    \int \rho(\lambda + \mathcal{S}^*_{\varepsilon} \nu \xi) \langle g, \lambda \rangle \mu(d\xi) &\leq M\rho(\lambda)^2, \\
    \int \sqrt{\rho}(\lambda + \mathcal{S}^*_{\varepsilon} \nu \xi) \langle g, \lambda \rangle \mu(d\xi) &\leq M \rho(\lambda),\\
    \int \langle g, \lambda \rangle \mu(d\xi) &\leq M \sqrt{\rho(\lambda)}. 
  \end{align*}
  which hold true by the second moment condition on $\mu$.  Concerning
  \eqref{eq:cond4}, denote as in Remark \ref{rem:quasicontractive}
  \[
    \tilde{\rho}(\lambda) =\sup_{t\geq 0} \exp(-\omega t) P_t
    \rho(\lambda) \, .
  \]
  In particular we know that $ \rho \leq \tilde \rho$ and it holds
  that $P_tf(x)=f(\psi_t(x))$ where $\psi$ is the solution of
  \eqref{eq:lambdadet1} which is linear. Using this together with
  $ | \sup_t c(t) - \sup_t d(t)| \leq \sup_t |c(t) - d(t) | $ we
  obtain similarly as in the proof of Corollary \ref{cor:affine}
  \begin{align*}
    & \int  \big | \frac{\tilde{\rho}(\lambda + \mathcal{S}^*_{\varepsilon} \nu \xi)-\tilde{\rho}(\lambda)}{\tilde{\rho}(\lambda)} \big | \langle g, \lambda \rangle\mu(d\xi)  \\ &\leq\int \big| \frac{\sup_{t \geq 0} \exp(-\omega t) |P_t\rho(\lambda + \mathcal{S}^*_{\varepsilon} \nu \xi)) -  P_t \rho( \lambda)|}{\tilde\rho(\lambda)} \big |  \langle g, \lambda \rangle\mu(d\xi)\\      
    &\leq\int \big| \frac{\sup_{t \geq 0} \exp(-\omega t) |\rho(\psi_t(\lambda + \mathcal{S}^*_{\varepsilon} \nu \xi))) -  \rho(\psi_t (\lambda))|}{\tilde\rho(\lambda)} \big |  \langle g, \lambda \rangle\mu(d\xi)\\   
    &= \int \big | \frac{ \sup_{t \geq 0} \exp(-\omega t) ( 2\| \psi_t( \lambda) \|_{Y^*} \; \| \psi_t( \mathcal{S}^*_{\varepsilon} \nu \xi )\|_{Y^*} + {\| \psi_t (\mathcal{S}^*_{\varepsilon} \nu \xi) \|}_{Y^*}^2 )}{\rho( \lambda )} \big | \langle g, \lambda \rangle\mu(d\xi)  \leq \widetilde{\omega} \, .
  \end{align*}
  for some $\widetilde{\omega} \in \mathbb{R}$. The last inequality
  holds by the linearity of $\psi$ and the second moment condition on
  $\mu$.  Proposition \ref{prop:jump_perturbation} now allows to
  conclude that $A+B$ where $B$ is given by
  \[
    Bf(\lambda)= \int (f(\lambda + \mathcal{S}^*_{\varepsilon} \nu
    \xi) -f(\lambda)) \langle g, \lambda \rangle\mu(d\xi)
  \]
  generates a generalized Feller semigroup $\widetilde P$ as asserted.

  For (ii) we now construct the probabilistically weak and
  analytically mild solution directly from the properties of the
  generalized Feller process: take $ y \in \mathcal{D} $ defined in
  \eqref{eq:mathcalD} and consider the martingale
  \begin{equation}\label{eq:mart}
    \begin{split}
      M^y_t & := \langle y, \lambda_t \rangle - \langle y , \lambda_0 \rangle - \int_0^t \langle \mathcal{A} y, \lambda_s \rangle - w\langle y ,  \nu  \rangle \langle g , \lambda_s \rangle ds  \\
      & \quad - \int_0^t \int \langle y, \mathcal{S}^*_\varepsilon \nu
      \xi \rangle \langle g, \lambda_s \rangle \mu(d \xi) ds
    \end{split}
  \end{equation}
  for $ t \geq 0 $ (after appropriate regularization as it is possible
  due to Theorem \ref{th:path_properties} such that the integral term
  is well defined). Let now $y$ be as above with the additional
  property that
  $ \langle y , \mathcal{S}^*_{\varepsilon}\nu \rangle = 1 $. Define
  \begin{align}\label{eq:N}
    N_t := M^y_t + \int_0^t \int \langle y, \mathcal{S}^*_\varepsilon \nu \xi
    \rangle \langle g, \lambda_s \rangle \mu(d \xi) ds
  \end{align}
  for $ t \geq 0 $, which is a c\`agl\`ad semimartingale. Then $N$
  does not depend on $y$. Indeed, for all $y_i$ with
  $ \langle y_i , \mathcal{S}^*_{\varepsilon}\nu \rangle = 1 $,
  $i=1,2$, we clearly have
  \[
    \int_0^t \int \langle y_1-y_2, \mathcal{S}^*_\varepsilon \nu
    \xi\rangle \langle g, \lambda_s \rangle \mu(d \xi) ds =0
  \]
  and $ M^{y_1} - M^{y_2} =M^{y_1-y_2}= 0 $ and as well.  The latter
  follows from the fact that the martingale $ M^y $ is constant if
  $ \langle y , \mathcal{S}^*_\varepsilon \nu \rangle = 0 $. This is
  indeed true since the martingale's quadratic variation vanishes in
  this case: we apply here the carr\'e du champ formula
  \[
    \mathbb{E}\big[[M^y_t,M^y_t] \big] = \mathbb{E} \big[ \int_0^t
    \big ( A f^2 (\lambda_s)- 2 f(\lambda_s) A f(\lambda_s) \big) ds
    \big]
  \]
  where $ f(\lambda) = \langle y , \lambda \rangle $ with $y$
  satisfying $\langle y, \mathcal{S}^*_\varepsilon \nu \rangle =
  0$. Moreover, by the definition of $N$ in \eqref{eq:N} its
  compensator is given by
  $\int_0^t \int \xi \langle \lambda_s,g \rangle \mu(d\xi)$ showing
  that $N$ has the desired properties.

  By \eqref{eq:mart} and \eqref{eq:N} we additionally obtain that
  \begin{align*}
    \langle y, \lambda_t \rangle & = \langle y , \lambda_0 \rangle + \int_0^t \langle \mathcal{A}y , \lambda_s \rangle ds - \int_0^t \langle y, w \nu \rangle \langle g , \lambda_s \rangle  ds + \langle y , \mathcal{S}^*_\varepsilon \nu \rangle N_t \\
  \end{align*}
  for $ y \in \mathcal{D} $. This analytically weak form can be
  translated into a mild form by standard methods. Indeed, notice that
  the integral is just along a finite variation path and therefore we
  can readily apply variation of constants. The last assertion about
  the c\`agl\`ad property is a consequence of Theorem
  \ref{th:path_properties} by noting that $\rho(\lambda)$ does not
  explode. This proves (ii).
		 
  Concerning (iii), note first that we have a unique mild solution to
  \begin{align}\label{eq:linadjoint}
    \partial_t y_t = \mathcal{A}y_t-w g \langle y_t , \nu \rangle, \quad y_0 \in Y,
  \end{align}
  since this is the adjoint equation of
  \eqref{eq:lambda_abstract}. For the equation with jumps we proceed
  as in Proposition \ref{prop:existenceY*} via Picard
  iteration. Denote the semigroup associated to \eqref{eq:linadjoint}
  by $\mathcal{S}^w$ and define
  \begin{align*}
    y_t^0&= y_0,\\
    y_t^{n}&=\mathcal{S}^w_ty_0+\int_0^t \mathcal{S}^w_{t-s}g\left(\int_{\mathbb{R}_+}
             \left(\exp(  \langle y^{n-1}_s , \mathcal{S}^*_{\varepsilon} \nu \xi \rangle )-1  \right) \mu(d\xi) \right)ds.
  \end{align*}
  Moreover, for $t \in [0,\delta]$ for some $\delta >0$ we have by
  local Lipschitz continuity of $x\mapsto \exp(x)$
  \begin{align*}
    \| y_t^{n+1}-y_t^n\|_{Y} &\leq  \|\int_0^t \mathcal{S}^w_{t-s}g \left(\exp(  \langle y^{n}_s , \mathcal{S}^*_{\varepsilon} \nu \xi \rangle )- \exp(  \langle y^{n-1}_s , \mathcal{S}^*_{\varepsilon} \nu \xi \rangle ) \right) \mu(d\xi) ds\|_{Y}\\
                             & \leq \int_0^t C \|\mathcal{S}^w_{t-s}g\|_Y \| y_t^{n}-y_t^{n-1}\|_{Y} \left(\int \|\mathcal{S}^*_{\varepsilon} \nu \xi\|_{Y*}\mu(d\xi) \right) ds.
  \end{align*}
  By an extension of Gronwall's inequality (see \cite[Lemma 15]{D:99})
  this yields convergence of $(y_t^n)_{n\in \mathbb{N}}$ with respect
  to $\|\cdot \|_{Y}$ and hence the existence of a unique local mild
  solution to \eqref{eq:Riccatisimpl} up to some maximal life time
  $t_+(y_0)$.  That $t_+(y_0)= \infty$ for all $y_0 \in \mathcal{E}_*$
  follows from the subsequent estimate
  \begin{align*}
    \| y_t\|_{Y} &= \|\mathcal{S}^w_t y_0 +\int_0^t \mathcal{S}^w_{t-s} g \left(\int \left(\exp(  \langle y_s , \mathcal{S}^*_{\varepsilon} \nu \xi \rangle )-1 \right) \mu(d\xi) \right)ds \|_{Y}\\
                 &\leq \|\mathcal{S}^w_t y_0 \|_Y +  \int_0^t \|\mathcal{S}^w_{t-s} g\|_Y \left(\int |\exp(\langle y_s , \mathcal{S}^*_{\varepsilon} \nu \xi \rangle )-1| \mu(dx) \right)ds\\
                 &\leq \|\mathcal{S}^w_t y_0 \|_Y  +  t \sup_{s \leq t}\|\mathcal{S}^w_{s} g\|_Y \mu(\mathbb{R}_+),
  \end{align*}
  where we used
  $|\exp(\langle y , \mathcal{S}^*_{\varepsilon} \nu \xi \rangle )-1|
  \leq 1$ for all $y \in \mathcal{E}_*$ in the last estimate.
		
  To prove (iv), just note that by the existence of a generalized
  Feller semigroup the abstract Cauchy problem for initial value
  $ \exp(\langle y_0,.\rangle) $ can be solved uniquely for
  $ y_0 \in \mathcal{E}_* $. Indeed,
  $\mathbb{E}_{\lambda} [\exp(\langle y_0,\lambda_t\rangle)]$ uniquely
  solves
  \[
    \partial_t u(t,\lambda)= A u(t, \lambda), \quad u(0, \lambda) =
    \exp(\langle y_0, \lambda \rangle),
  \]
  where $A$ denotes the generator associated to \eqref{eq:SPDEbase}.
  Setting $u(t,\lambda)=\exp(\langle y_t, \lambda\rangle )$, we have
  \[
    \partial_t u(t,\lambda)=\exp(\langle y_t, \lambda \rangle )
    R(y_t),
  \]
  where the right hand side is nothing else than
  $A\exp(\langle y_t, \lambda \rangle)$, hence the affine transform
  formula holds true. This also implies that $y_t \in \mathcal{E}_*$
  for all $t \geq 0$, simply because
  $\mathbb{E}_{\lambda} [\exp(\langle y_0,\lambda_t\rangle)] \leq 1$
  for all $\lambda \in \mathcal{E}$.
\end{proof}

The next statement is a refinement of Conclusion (i) of Proposition
\ref{prop:epsjumps} for the case $\varepsilon =0$. All other
conclusions are explicitely stated in Theorem \ref{thm:main} below.

\begin{proposition}\label{prop:jumpseps0}
  Let Assumptions \ref{ass:semigroup} and \ref{ass:crucial_abstract}
  be in force. Moreover, let $\mu \in \mathcal{M}_+(\mathbb{R}_{+})$
  with finite second moment.  Then Conclusion (i) of Proposition
  \ref{prop:epsjumps} holds true for $\varepsilon=0$.  Moreover, the
  affine transform formula for the generalized Feller process
  $(\lambda_t)_{t\geq 0}$ of \eqref{eq:SPDEbase} with $\varepsilon=0$
  is also satisfied, i.e.
  \[
    \mathbb{E}_{\lambda_0}\left[ \exp( \langle y_0, \lambda_t
      \rangle)\right]=\exp(\langle y_t, \lambda_0 \rangle),
  \]
  where $y_t$ solves $\partial_t y_t=R(y_t)$ for all
  $y_0 \in \mathcal{E}_*$ in the mild sense with $R$ given by
  \eqref{eq:Riccatisimpl} with $\varepsilon=0$. Furthermore,
  $y_t \in \mathcal{E}_*$ for all $t \geq 0$.
\end{proposition}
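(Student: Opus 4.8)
The plan is to obtain the $\varepsilon=0$ case as the limit $\varepsilon\downarrow0$ of the generalized Feller semigroups of Proposition~\ref{prop:epsjumps}, using the approximation Theorem~\ref{thm:approximation_gen}, and then to pass to the limit in the affine transform formula. For $\varepsilon_n:=1/n$ let $P^n$ be the generalized Feller semigroup on $\mathcal{B}^\rho(\mathcal{E})$ from Proposition~\ref{prop:epsjumps}(i), with generator $A^n:=A+B^{\varepsilon_n}$, where $A$ generates the deterministic semigroup of Theorem~\ref{th:existenceintersect} and $B^{\varepsilon}f(\lambda)=\int_{\mathbb{R}_+}(f(\lambda+\mathcal{S}^*_{\varepsilon}\nu\xi)-f(\lambda))\langle g,\lambda\rangle\mu(d\xi)$; by Proposition~\ref{prop:epsjumps}(iv) each $P^n$ satisfies $\mathbb{E}_{\lambda_0}[\exp(\langle y_0,\lambda^n_t\rangle)]=\exp(\langle y^n_t,\lambda_0\rangle)$ with $y^n$ the global mild solution of $\partial_t y=R^{\varepsilon_n}(y)$, $y_0\in\mathcal{E}_*$, and $y^n_t\in\mathcal{E}_*$ for all $t$.

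The crucial --- and, I expect, the hardest --- step is a growth bound for $P^n$ that is \emph{uniform in $n$}. The estimate inherited from Proposition~\ref{prop:jump_perturbation} is useless here, because the constant $\widetilde\omega_n$ there is of order $\|\mathcal{S}^*_{\varepsilon_n}\nu\|_{Y^*}\to\infty$. Instead I would use $\|P^n_t\|_{L(\mathcal{B}^\rho(\mathcal{E}))}\le\sup_{\lambda_0}\rho(\lambda_0)^{-1}\mathbb{E}_{\lambda_0}[\rho(\lambda^n_t)]$ and estimate the second moment of $\lambda^n_t$ through the analytically mild, probabilistically weak representation of Proposition~\ref{prop:epsjumps}(ii),
\[
  \lambda^n_t=\mathcal{S}^*_t\lambda_0-w\int_0^t\mathcal{S}^*_{t-s}\nu\langle g,\lambda^n_s\rangle\,ds+\int_0^t\mathcal{S}^*_{t-s+\varepsilon_n}\nu\,dN^n_s .
\]
Pairing with $g$ and taking expectations gives a scalar linear Volterra equation for $m^n(t):=\mathbb{E}_{\lambda_0}[\langle g,\lambda^n_t\rangle]$ with kernel $\mu_1K(\cdot+\varepsilon_n)-wK(\cdot)$, where $\mu_1=\int\xi\,\mu(d\xi)$; since $|K|\le\|g\|_Y\|\mathcal{S}^*_\cdot\nu\|_{Y^*}$ lies in $L^2_{\mathrm{loc}}\subset L^1_{\mathrm{loc}}$ and $\int_0^{T}\|\mathcal{S}^*_{s+\varepsilon_n}\nu\|_{Y^*}\,ds\le\int_0^{T+1}\|\mathcal{S}^*_{s}\nu\|_{Y^*}\,ds$, the $L^1([0,T])$ norm of this kernel is bounded uniformly in $n$, whence $m^n(t)\le C(t)\|\lambda_0\|_{Y^*}$ uniformly in $n$ via the resolvent formula. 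Feeding $m^n$ into a second-moment estimate for $\|\lambda^n_t\|_{Y^*}$ --- using the compensation formula for the finite-activity jump integral $t\mapsto\int_0^t\mathcal{S}^*_{t-s+\varepsilon_n}\nu\,dN^n_s$, the finiteness of $\mu_2=\int\xi^2\mu(d\xi)$, and crucially $\int_0^{T+1}\|\mathcal{S}^*_s\nu\|_{Y^*}^2\,ds<\infty$ from Assumption~\ref{ass:semigroup}(ii), which keeps all occurring Volterra kernels bounded in $L^1_{\mathrm{loc}}$ uniformly in $\varepsilon_n\in(0,1]$ --- and applying the generalized Gronwall lemma on Volterra kernels exactly as in the proof of Proposition~\ref{prop:existenceY*} yields $\mathbb{E}_{\lambda_0}[\rho(\lambda^n_t)]\le M\exp(\omega t)\rho(\lambda_0)$ with $M,\omega$ independent of $n$; the analogous computation with $\sqrt\rho$ (only first moments needed) gives a uniform bound on $\mathcal{B}^{\sqrt\rho}(\mathcal{E})$ too.

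With these bounds in hand, take $D$ to be a point-separating family of exponential functions $\exp(\langle\cdot,y\rangle)$, $y\in\mathcal{E}_*\cap\operatorname{dom}(\mathcal{A})$, whose linear span is dense in $\mathcal{B}^\rho(\mathcal{E})$ by Theorem~\ref{theorem:boundedweakcontapprox} and the remark following it. For $f=\exp(\langle\cdot,y\rangle)\in D$ the affine formula gives $P^m_u f=\exp(\langle y^m_u,\cdot\rangle)$ with $y^m_u\in\mathcal{E}_*$, so the deterministic parts of the generators cancel and
\[
  (A^n-A^m)P^m_u f(\lambda)=\exp(\langle y^m_u,\lambda\rangle)\langle g,\lambda\rangle\int_{\mathbb{R}_+}\!\big(e^{\xi\langle y^m_u,\mathcal{S}^*_{\varepsilon_n}\nu\rangle}-e^{\xi\langle y^m_u,\mathcal{S}^*_{\varepsilon_m}\nu\rangle}\big)\mu(d\xi).
\]
Since $\langle y^m_u,\mathcal{S}^*_\varepsilon\nu\rangle\le0$ and $\exp(\langle y^m_u,\lambda\rangle)\le1$ on $\mathcal{E}$, this is bounded in $\|\cdot\|_\rho$ by $\|g\|_Y\,\mu_1\,\big(\sup_{u\le t}\|y^m_u\|_Z\big)\,\|\mathcal{S}^*_{\varepsilon_n}\nu-\mathcal{S}^*_{\varepsilon_m}\nu\|_{Z^*}$, which tends to $0$ as $n,m\to\infty$ uniformly in $u\in[0,t]$, because $\mathcal{S}^*$ is strongly continuous on $Z^*$ with $\nu\in Z^*$ and $\{y^m_u:m\ge1,\,u\le t\}$ is a bounded subset of $Z$ (by an a priori estimate analogous to that in the proof of Proposition~\ref{prop:epsjumps}(iii), carried out in $\|\cdot\|_Z$). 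This is precisely hypothesis~(ii) of Theorem~\ref{thm:approximation_gen}, so there is a generalized Feller semigroup $P^0=\lim_nP^n$ on $\mathcal{B}^\rho(\mathcal{E})$ with $P^0_t1=1$ and $P^0_t\rho\le M\exp(\omega t)\rho$, and Theorem~\ref{th:kolmogorov_extension} furnishes the associated generalized Feller process $(\lambda_t)_{t\ge0}$ on $\mathcal{E}$ (with c\`agl\`ad versions of $(f_n(\lambda_t))$ from Theorem~\ref{th:path_properties}); this is Conclusion~(i) of Proposition~\ref{prop:epsjumps} for $\varepsilon=0$.

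Finally, for the affine transform formula one first solves the $\varepsilon=0$ Riccati equation $\partial_t y=R^0(y)$, $y_0\in\mathcal{E}_*$, by Picard iteration just as in the proof of Proposition~\ref{prop:epsjumps}(iii) with $\nu\in Z^*$ replacing $\mathcal{S}^*_\varepsilon\nu$: here $\langle y_0,\nu\rangle=\lim_{u\downarrow0}\langle y_0,\mathcal{S}^*_u\nu\rangle\le0$ for $y_0\in\mathcal{E}_*\cap\operatorname{dom}(\mathcal{A})$, since $\mathcal{S}^*_u\nu\in\mathcal{E}$ and $\mathcal{S}^*$ is strongly continuous on $Z^*$, so the bound $|e^{\langle y_s,\nu\xi\rangle}-1|\le1$ prevents blow-up and gives global existence with $y^0_t\in\mathcal{E}_*$. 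Moreover $R^{\varepsilon_n}(y)\to R^0(y)$ for each $y$ (as $\langle y,\mathcal{S}^*_{\varepsilon_n}\nu\rangle\to\langle y,\nu\rangle$), so continuous dependence of the mild solution gives $y^n_t\to y^0_t$ uniformly on compacts. Passing to the limit in $\mathbb{E}_{\lambda_0}[\exp(\langle y_0,\lambda^n_t\rangle)]=\exp(\langle y^n_t,\lambda_0\rangle)$ --- the left-hand side being $P^n_t\exp(\langle y_0,\cdot\rangle)(\lambda_0)\to P^0_t\exp(\langle y_0,\cdot\rangle)(\lambda_0)=\mathbb{E}_{\lambda_0}[\exp(\langle y_0,\lambda_t\rangle)]$ by the previous step --- yields the affine transform formula. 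Since $\lambda_t\in\mathcal{E}$ and $y_0\in\mathcal{E}_*$ we have $\mathbb{E}_{\lambda_0}[\exp(\langle y_0,\lambda_t\rangle)]\le1$, hence $\exp(\langle y^0_t,\lambda_0\rangle)\le1$ for every $\lambda_0\in\mathcal{E}$, i.e.~$y^0_t\in\mathcal{E}_*$.
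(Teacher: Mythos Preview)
Your proposal is correct and follows essentially the same route as the paper: approximate by $\varepsilon_n=1/n$, obtain a uniform growth bound by estimating $\mathbb{E}[\|\lambda^n_t\|_{Y^*}^2]$ from the mild representation of Proposition~\ref{prop:epsjumps}(ii) via the compensation formula and the generalized Gronwall lemma, take $D$ to be the Fourier elements $\exp(\langle y,\cdot\rangle)$ with $y\in\mathcal{E}_*$, and verify condition~(ii) of Theorem~\ref{thm:approximation_gen} using the affine formula together with a uniform $Z$-bound on $y^m_u$ and strong continuity of $\mathcal{S}^*$ on $Z^*$. The only cosmetic difference is that for the affine transform you first solve the $\varepsilon=0$ Riccati and then show $y^n\to y^0$, whereas the paper defines $y_t:=\lim_{\varepsilon\to0} y^{\varepsilon}_t$ and identifies it a posteriori via uniqueness of the abstract Cauchy problem.
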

	
	\begin{remark}
          The essential point here is that we loose the c\`agl\`ad
          property when we let $ \varepsilon $ tend to zero. As long
          as the kernel $ K $ has a singularity at $ t = 0 $ it is
          impossible to preserve finite growth bounds \emph{without}
          $ M > 1 $ as $ \varepsilon \to 0 $. To apply the second
          conclusion of Theorem \ref{th:path_properties} yielding
          c\`agl\`ad trajectories it is however crucial that $M=1$.
        \end{remark}

\begin{proof}
  We apply Theorem \ref{thm:approximation_gen} and consider a sequence
  of generalized Feller semigroups $(P^n)_{n \in \mathbb{N}}$ with
  generators $A^n$ corresponding to the solution of
  \eqref{eq:SPDEbase} for $\varepsilon =\frac{1}{n}$,
  $n \in \mathbb{N}$. Let us first establish a uniform growth bound
  for this sequence. Note that for the solution of
  \eqref{eq:SPDEbase}, we have due to Proposition \ref{prop:epsjumps}
  (ii) the following estimate for $t \in [0,T]$ for some fixed $T>0$
  \begin{align*}
    \mathbb{E}[\| \lambda^{\varepsilon}_t\|^2_{Y^*}] &\leq 3\|\mathcal{S}^*_t \lambda_0\|^2_{Y^*}+ 3t\int_0^t w^2\|\mathcal{S}^*_{t-s}\nu \|^2_{Y^*} \|g\|^2_{Y} \mathbb{E}[\|\lambda^{\varepsilon}_s\|^2_{Y^*}]ds \\
                                                     &\quad + 6\mathbb{E} [\|\int_0^t \mathcal{S}^*_{t-s+\varepsilon} \nu  dN_s -\int_0^t \int \mathcal{S}^*_{t-s+\varepsilon} \nu \xi \mu(d\xi) \langle g, \lambda^{\varepsilon}_s \rangle ds\|^2]\\
                                                     &\quad + 6\mathbb{E} [\|\int_0^t \int \mathcal{S}^*_{t-s+\varepsilon} \nu \xi \mu(d\xi) \langle g, \lambda^{\varepsilon}_s \rangle ds\|^2]\\
                                                     &\leq C_0 \|\lambda_0\|^2_{Y^*}  + 3t\int_0^t w^2\|\mathcal{S}^*_{t-s}\nu \|^2_{Y^*} \|g\|^2_{Y} \mathbb{E}[\|\lambda^{\varepsilon}_s\|^2_{Y^*}]ds\\
                                                     &\quad +6\mathbb{E} [\int \xi^2\mu(d\xi)\int_0^t  \|\mathcal{S}^*_{t-s+\varepsilon} \nu \|_{Y^*}^2\langle g, \lambda^{\varepsilon}_s \rangle  ds]\\
                                                     &\quad +  C_1 \int \xi^2\mu(d\xi) \int_0^t \|\mathcal{S}^*_{t-s+\varepsilon} \nu\|^2_{Y^*}\|g\|^2_Y\mathbb{E}[ \|\lambda^{\varepsilon}_s\|^2_{Y^*}] ds\\
                                                     &\leq C_0  \|\lambda_0\|^2_{Y^*}  + 3t\int_0^t w^2\|\mathcal{S}^*_{t-s}\nu \|^2_{Y^*} \|g\|_{Y} \mathbb{E}[\|\lambda^{\varepsilon}_s\|^2_{Y^*}]ds\\
                                                     &\quad +C_2\int \xi^2\mu(d\xi)\int_0^t  \|\mathcal{S}^*_{t-s+\varepsilon} \nu \|_{Y^*}^2  ds\\
                                                     &\quad + C_1 \int \xi^2\mu(d\xi) \int_0^t \|\mathcal{S}^*_{t-s+\varepsilon} \nu\|^2_{Y^*}\|g\|^2_Y\mathbb{E}[ \|\lambda^{\varepsilon}_s\|^2_{Y^*}] ds\\
                                                     &\leq C_0  \|\lambda_0\|^2_{Y^*} + C_2\int_0^t  \|\mathcal{S}^*_{t-s} \nu \|_{Y^*}^2 ds + C_1 \int_0^t \|\mathcal{S}^*_{t-s} \nu\|^2_{Y^*}\mathbb{E}[ \|\lambda^{\varepsilon}_s\|^2_{Y^*}] ds, 
  \end{align*}
  where the constants can change from line to line and depend on $T$.
  We use $\|S_t^*\lambda_0\|^2 \leq C_0 \|\lambda_0\|^2$ for
  $t \in [0,T]$, as well as
  $ \|\mathcal{S}^*_{t-s+\varepsilon} \nu\|_{Y^*} \leq \widehat{C}
  \|\mathcal{S}^*_{t-s} \nu\|_{Y^*}$ for some constant $\widehat{C}$
  and all $\varepsilon \in (0,1]$ due to strong continuity. Exactly by
  the same arguments as in the proof of Proposition
  \ref{prop:existenceY*} , we thus obtain for $t \in [0,T]$ for some
  fixed $T$
  \[
    \mathbb{E}[\|\lambda_t\|^2_{Y^*}] \leq
    \widetilde{C}(\|\lambda_0\|^2_{Y^*}+1) (1-\int_0^t R'(s), ds),
  \]
  where $R'$ denotes the resolvent of
  $-C_2\|\mathcal{S}^*_{t-s}\nu \|_{Y^*} $. Hence,
  $\mathbb{E}[\rho(\lambda_t)]\leq C \rho(\lambda_0)$ for
  $t \in [0,T]$. From this the desired uniform growth bound
  $\|P_t\|_{L(\mathcal{B}^{\rho}(\mathcal{E}))} \leq M \exp(\omega t)$
  for some $M \geq 1 $ and $\omega \in \mathbb{R}$ follows.

  For the set $D$ as of Theorem \ref{thm:approximation_gen} we here
  choose Fourier basis elements of the form
  \begin{align}\label{eq:Fourier}
    f_y: \mathcal{E} \to [0,1]; \lambda \mapsto \exp( \langle y , \lambda \rangle) 
  \end{align}
  such that $y \in \mathcal{E}_*$ and
  $\lambda \mapsto \exp( \langle y , \lambda \rangle)$ lies in
  $\cap_{n \geq 1} \operatorname{dom}(A^n)$, whose span is of course
  dense, whence (i) of Theorem \ref{thm:approximation_gen}.  We now
  equip $\operatorname{span}(D)$ with the uniform norm
  $\| \cdot\|_{\infty}$ and verify Condition (ii), i.e.~we check
  \begin{align}\label{eq:CondII}
    \|A^n P^m_u f_y -A^mP^m_u f_y\|_{\rho} \leq \|f_y\|_{\infty}a_{nm}
  \end{align}
  for all $0 \leq u \leq t$ with $a_{nm} \to 0$ as $n,m \to \infty$,
  and possibly depending on $y$ and $t$.  Note that
  \[
    A^nf_y(\lambda)=\langle R^n(y), \lambda\rangle f_y(\lambda),
  \]
  where $R^n$ corresponds to \eqref{eq:Riccatisimpl} for
  $\varepsilon =\frac{1}{n}$. As $P^n$ leaves $D$ invariant for all
  $n \in \mathbb{N}$ by Proposition \ref{prop:epsjumps} (iv), we have
  \begin{align*}
    &\frac{|A^n P^m_u f_y(\lambda) -A^m P^m_u f_y(\lambda)|}{\rho(\lambda)}\\
    &\quad=\frac{f_{y^m_u}(\lambda) \langle g, \lambda\rangle}{\rho(\lambda)}\int_{\mathbb{R}_+} \underbrace{\exp(\langle y^m_u, \mathcal{S}^*_{\frac{1}{m}} \nu\xi \rangle)}_{\leq 1}\underbrace{|\exp(\langle y^m_u, (\mathcal{S}^*_{\frac{1}{n}} -\mathcal{S}^*_{\frac{1}{m}} \nu ) \xi\rangle) -1 |}_{\widetilde{a}_{nm}}\mu(d\xi )
  \end{align*}
  where $y^m_u$ denotes the solution of $\partial_t y^m_u=R^m(y^m_t)$
  at time $u$ with $y_0=y$ and $\widetilde{a}_{nm}$ is chosen
  uniformly for all $u \leq t$ and tends to $0$ as $n,m \to \infty$.
  This is possible since for the chosen initial values $ y $ we obtain
  that $ {\|y^m_u\|}_Z $ is bounded on compact intervals in time
  uniformly in $ m $ also with respect to the $ Z $ norm. Indeed
  \begin{align*}
    \| y_t\|_{Z} &= \|\mathcal{S}^w_t y_0 +\int_0^t \mathcal{S}^w_{t-s} g \left(\int \left(\exp(  \langle y_s , \mathcal{S}^*_{\varepsilon} \nu \xi \rangle )-1 \right) \mu(d\xi) \right)ds \|_{Z}\\
                 &\leq \|\mathcal{S}^w_t y_0 \|_Z +  \int_0^t \|\mathcal{S}^w_{t-s} g\|_Z \left(\int |\exp(\langle y_s , \mathcal{S}^*_{\varepsilon} \nu \xi \rangle )-1| \mu(dx) \right)ds\\
                 &\leq \|\mathcal{S}^w_t y_0 \|_Z  +   \mu(\mathbb{R}_+) \int_0^t \|\mathcal{S}^w_{u} g\|_Z du  \, ,
  \end{align*}
  where $\mathcal{S}^w$ is defined in the proof of Proposition
  \ref{prop:epsjumps}.  From this we infer \eqref{eq:CondII} with
  $a_{nm}= C\widetilde{a}_{nm}$ for some constant $C$. The conditions
  of Theorem \ref{thm:approximation_gen} are therefore satisfied and
  we obtain a generalized Feller semigroup whose generator corresponds
  to $\eqref{eq:SPDEbase}$ with $\varepsilon =0$.

  Concerning the second assertion, the affine transform formula
  follows simply from the convergence of the semigroups $P^n$ as
  asserted in Theorem \ref{thm:approximation_gen} by setting
  $y_t= \lim_{\varepsilon \to 0} y_t^{\varepsilon}$, where
  $y_t^{\varepsilon}$ solves
  $\partial_t y_t^{\varepsilon}=R^{\varepsilon}(y_t^{\varepsilon} )$
  in the mild sense with $R^{\varepsilon}$ given by
  \eqref{eq:Riccatisimpl}. Since $\exp(\langle y_t, \lambda \rangle)$
  is then also the unique solution of the abstract Cauchy problem for
  initial value $ \exp(\langle y_0,\lambda\rangle) $, i.e.  it solves
  \[
    \partial_t u(t,\lambda)= A u(t, \lambda), \quad u(0, \lambda) =
    \exp(\langle y_0, \lambda \rangle),
  \]
  where $A$ denotes the generator associated to \eqref{eq:SPDEbase}
  with $\varepsilon =0$, we infer that $y_t$ satisfies
  $\partial_t y_t=R(y_t)$ with $R$ given by \eqref{eq:Riccatisimpl}
  for $\varepsilon=0$. This is because
  $A\exp(\langle y_t, \lambda \rangle )=\exp(\langle y_t, \lambda
  \rangle)R(y_t)$.
\end{proof}

We are now ready to state our main theorem, namely an existence and
uniqueness result for equations of the type
\begin{align}\label{eq:SPDEmain}
  d \lambda_t = \mathcal{A}^* \lambda_t dt + \nu dX_t,
\end{align}
where $(X_t)_{t \geq 0}$ is an It\^o semimartingale of the form
\begin{align}\label{eq:X}
  X_t = \int_0^t \beta \langle g, \lambda_s \rangle ds+ \int_0^t
  \sigma \sqrt{\langle g, \lambda_t \rangle}dB_t+ \int_0^t \int \xi
  (\mu^X(d\xi,ds) - \langle g, \lambda_s \rangle m(d\xi) ds),
\end{align}
for some Brownian motion $B$ and random jump measure $\mu^X$ with
$\beta, \sigma \in \mathbb{R}$ and $m(d\xi)$ is a L\'evy measure on
$\mathbb{R}_{+}$ admitting a second moment.

\begin{theorem}\label{thm:main}
  Let Assumptions \ref{ass:crucial_abstract} and \ref{ass:semigroup}
  be in force.
  \begin{enumerate}
  \item Then the stochastic partial differential equation
    \eqref{eq:SPDEmain} admits a unique Markovian solution
    $(\lambda_t)_{t\geq0} $ with values in $ \mathcal{E} $ given by a
    generalized Feller semigroup on $ \mathcal{B}^\rho(\mathcal{E}) $
    whose generator takes on the set of Fourier elements
    \begin{align*}
      f_y: \mathcal{E} \to [0,1]; \lambda \mapsto \exp( \langle y , \lambda\rangle)   
    \end{align*}
    for $y \in \mathcal{D} \cap \mathcal{E}_*$ where $\mathcal{D}$ is
    defined in \eqref{eq:mathcalD} the form
    \begin{align}\label{eq:genmain}
      Af_y(\lambda)=f_y(\lambda) (\langle \mathcal{A}y, \lambda \rangle+ \mathcal{R}(\langle y, \nu \rangle) \langle g,\lambda \rangle.
    \end{align}  
    Here, $\mathcal{R}: \mathbb{R}_- \to \mathbb{R}$ is given by
    \begin{align}\label{eq:Rtrad}
      \mathcal{R}(u)=\beta u+ \frac{1}{2} \sigma^2 u^2+\int_{\mathbb{R}_+}
      \left(\exp( u \xi )-1 - u\xi \right) m(d\xi).
    \end{align}
  \item This generalized Feller process allows to construct a
    probabilistically weak and analytically mild solution of
    \eqref{eq:SPDEmain}, i.e. for $ y \in Y $
    \[
      \langle y , \lambda_t \rangle = \langle y, \mathcal{S}_t^*
      \lambda_0 \rangle +\int_0^t \langle y,\mathcal{S}_{t-s}^* \nu
      \rangle dX_s \, ,
    \]
    in particular for every initial value
    $ \lambda_0 \in \mathcal{E} $ the semimartingale $X$ can be
    constructed on an appropriate probabilistic basis.  The stochastic
    integral is only defined in the \emph{weak} sense, that is after
    pairing with $ y \in Y $, a solution concept which is sometimes
    denoted by \emph{weakly mild}. For every $ y \in Y $ there exists
    a c\`ag version of the real valued process
    $ {(\langle y , \lambda_t \rangle )}_{t \geq 0} $.
  \item The affine transform formula is satisfied, i.e.
    \[
      \mathbb{E}_{\lambda_0}\left[ \exp( \langle y_0, \lambda_t
        \rangle)\right]=\exp(\langle y_t, \lambda_0 \rangle),
    \]
    where $y_t$ solves $\partial_t y_t=R(y_t)$ for all
    $y_0 \in \mathcal{E}_*$ and $t >0$ in the mild sense with
    $R: \mathcal{D} \cap \mathcal{E}_* \to \mathbb{R}$ given by
    \begin{align}\label{eq:Ricfin}
      R(y) = \mathcal{A} y + \mathcal{R}( \langle y , \nu \rangle) g
    \end{align}
    and $\mathcal{R}$ defined in \eqref{eq:Rtrad}.  Furthermore,
    $y_t \in \mathcal{E}_*$ for all $t \geq 0$.
  \item For all $\lambda_0 \in \mathcal{E}$, the corresponding jump
    diffusion stochastic Volterra equation,
    $V_t:= \langle g, \lambda_t \rangle$, given by
    \begin{align}\label{eq:Voltrep}
      V_t= \langle g, \lambda_t \rangle =\langle g, \mathcal{S}_t^* \lambda_0 \rangle +
      \int_0^t\langle g,\mathcal{S}_{t-s}^*\nu\rangle dX_s= h(t) +
      \int_0^t K(t-s)dX_s
    \end{align}
    with $h(t)=\langle g, \mathcal{S}_t^* \lambda_0 \rangle$ admits a
    probabilistically weak solution with c\`ag trajectories.
  \item[(v)] The Laplace transform of the Volterra equation $V_t$ is
    given by
    \begin{align}\label{eq:Voltlaplace}
      \mathbb{E}_{\lambda_0}\left[\exp(u V_t)\right]=\exp(u h(t)+ \int_0^t h(t-s) \mathcal{R}(\psi_s) ds),
    \end{align}
    where $h(t)=\langle g, \mathcal{S}_t^* \lambda_0 \rangle$ and
    $\psi_t:=\langle y_t, \nu\rangle$ solves the Riccati Volterra
    equation
    \[
      \psi_t=u K(t)+\int_0^t K(t-s) \mathcal{R}(\psi_s) ds, \quad t >0
      \, .
    \]
    Hence the solution of the stochastic Volterra equation in
    \eqref{eq:Voltrep} is unique in law.
  \end{enumerate}
\end{theorem}

\begin{remark}
  \begin{enumerate}
  \item Observe that \eqref{eq:Voltrep} implies that the function
    $h(t)$ defined by
    $h(t) = \langle g, \mathcal{S}^*_t \lambda_0 \rangle$ is given by
    the expectation
    \[
      h(t) = \mathbb{E}[V_t - \beta \int_0^t K(t-s) V_s ds].
    \]
    In the diffusion case, \eqref{eq:Voltlaplace} therefore coincides
    with the formula obtained in \cite{AbiLarPul:17}.  From the Markov
    property of $(\lambda_t)_{t \geq0}$, it is easy to see that the
    conditional Laplace transform of $V_t$ reads as
    \begin{align*}
      \mathbb{E}_{\lambda_0}\left[\exp(u V_t)\,|\,
      \mathcal{F}_s\right]&= \exp(\langle y_{t-s}, \lambda_s\rangle)
      \\
                          &= \exp(u \langle g, \mathcal{S}_{t-s}^* \lambda_s \rangle+
                            \int_0^{t-s}  \langle g, \mathcal{S}^*_{t-s-r} \lambda_u \rangle
                            \mathcal{R}(\langle y_r, \nu \rangle) dr),
    \end{align*}
    where $y_t$ is the solution of $\partial_t y_t=R(y_t)$ with $R$
    given by \eqref{eq:Ricfin} and $y_0=u g$.  Here,
    $\langle g, \mathcal{S}^*_{x} \lambda_s \rangle$ corresponds to
    the following conditional expectation
    \[
      \langle g, \mathcal{S}^*_{x} \lambda_s \rangle
      =\mathbb{E}\left[V_{s+x} - \beta \int_0^x K(x-r) V_{s+r} dr \,
        |\, \mathcal{F}_s \right], \quad x \geq 0.
    \]
    The function-valued process
    $(x \mapsto \langle g, \mathcal{S}^*_{x} \lambda_t \rangle)_{t
      \geq 0}$ corresponds also to a Markovian lift, described in
    detail in Section \ref{sec:markovianlift_forward}. Note that
    assuming existence of $V$, \eqref{eq:Voltlaplace} and also the
    conditional Laplace transform of $V$ is completely independent of
    the kernel representation.

  \item Mimicking the proof of assertion (v) of the above theorem, one
    can show that
    \[
      \mathbb{E}_{\lambda_0}\left[ \exp( \langle y_0, \lambda_t
        \rangle)\right]=\exp(\langle y_t, \lambda_0 \rangle)=
      \exp(\langle y_0, S_t^* \lambda_0 \rangle + \int_0^t h(t-s)
      \mathcal{R}(\widetilde{\psi}_s) ds),
    \]
    where $\widetilde{\psi}_t$ solves
    \[
      \widetilde{\psi}_t= \langle y_0, \mathcal{S}^*_t \nu \rangle +
      \int_0^t K(t-s) \mathcal{R}(\widetilde{\psi}_s) ds.
    \]
  \end{enumerate}
\end{remark}

\begin{proof}
  To prove (i), we proceed again by applying Theorem
  \ref{thm:approximation_gen}. To this end consider the following
  sequence of SPDEs
  \begin{equation}\label{eq:approxSPDE}
    \begin{split}
      d \lambda^n_t  &= \mathcal{A}^* \lambda_t  dt +\beta \nu  \langle g, \lambda^n_t \rangle dt - n \sigma^2 \nu  \langle g, \lambda^n_t \rangle dt + \nu dN^n_{1,t}\\
      &\quad - \int_{\{\xi > \frac{1}{n}\}} \xi m(d\xi) \nu \langle g,
      \lambda^n_t \rangle dt + \nu dN^n_{2,t},
    \end{split}
  \end{equation}
  where $N^n_{t,1}$ is a jump process that jumps by $1/n$ and
  intensity $\sigma^2 n^2 \langle g, \lambda^n_t\rangle$, i.e. its
  compensator is given by
  $\sigma^2 n^2 \langle g, \lambda^n_t
  \rangle\delta_{\frac{1}{n}}(d\xi)$. The compensator of $N^n_{t,2}$
  is given by
  $1_{\{\xi > \frac{1}{n}\}} m(d\xi) \langle g, \lambda^n_t
  \rangle$. In terms of \eqref{eq:SPDEbase}, this corresponds to
  $\varepsilon =0$, $N=N^n=N^n_1+N^n_2$,
  $w=-\beta+n\sigma^2+\int_{\{\xi > \frac{1}{n}\}} \xi m(d\xi)$ and
  $\mu^n(d\xi):=\sigma^2 n^2\delta_{\frac{1}{n}}(d\xi)+ 1_{\{\xi >
    \frac{1}{n}\}}m(d\xi)$. From Proposition \ref{prop:jumpseps0} we
  thus know that these SPDEs admit a solution in terms of generalized
  Feller semigroups $P^n$ with associated generators $A^n$.  Let us
  now establish a uniform growth bound for this sequence. Due to
  Proposition \ref{prop:jumpseps0} we can establish the following
  estimate for $t \in [0,T]$ for some fixed $T>0$
  \begin{align*}
    \mathbb{E}[\| \lambda^{n}_t\|^2_{Y^*}] &\leq 3\|\mathcal{S}^*_t \lambda_0\|^2_{Y^*}+ 
                                             3t\int_0^t \beta^2\|\mathcal{S}^*_{t-s}\nu \|^2_{Y^*} \|g\|^2_{Y} \mathbb{E}[\|\lambda^{n}_s\|^2_{Y^*}]ds \\   
                                           &\quad +3\mathbb{E} [\|\int_0^t \mathcal{S}^*_{t-s} \nu  dN^n_s -\int_0^t \int \mathcal{S}^*_{t-s} \nu \xi \mu^n(d\xi) \langle g, \lambda^{n}_s \rangle ds\|^2]\\
                                           &\leq 3C_0\|\lambda_0\|^2_{Y^*}+3t\int_0^t\beta^2  \|\mathcal{S}^*_{t-s}\nu \|^2_{Y^*} \|g\|^2_{Y} \mathbb{E}[\|\lambda^{n}_s\|^2_{Y^*}]ds\\
                                           &\quad +
                                             3\mathbb{E} [\int \xi^2\mu^n(d\xi)\int_0^t  \|\mathcal{S}^*_{t-s} \nu \|_{Y^*}^2\langle g, \lambda^{n}_s \rangle  ds]\\
                                           &\leq 3C_0\|\lambda_0\|^2_{Y^*}+3(\sigma^2+\int \xi^2 m(d\xi)) \int_0^t  \|\mathcal{S}^*_{t-s} \nu \|_{Y^*}^2 ds\\
                                           &\quad+3\|g\|^2_Y (\beta^2t+\sigma^2+\int \xi^2 m(d\xi))\int_0^t \|\mathcal{S}^*_{t-s} \nu\|^2_{Y^*}\mathbb{E}[ \|\lambda^{n}_s\|^2_{Y^*}] ds.
  \end{align*}
  Exactly as in Proposition \ref{prop:jumpseps0}, this now yields the
  uniform growth bound.  For the set $D$ as of Theorem
  \ref{thm:approximation_gen} we consider again the Fourier basis
  elements as in \eqref{eq:Fourier}, whose span is of course dense,
  whence (i).  We equip $\operatorname{span}(D)$ again with the
  uniform norm $\| \cdot\|_{\infty}$ and obtain a contraction property
  there. To verify Condition (ii) of Theorem
  \ref{thm:approximation_gen}, we check \eqref{eq:CondII} in the
  present setting. Note that $A^n$ takes the form
  \[
    A^nf_y(\lambda)=f_y(\lambda) (\langle \mathcal{A}y, \lambda
    \rangle+\beta \langle g, \lambda \rangle \langle y, \nu \rangle +
    \langle g, \lambda \rangle \int (\exp(\langle y, \nu \xi \rangle)
    -1 -\langle y, \nu \xi \rangle) \mu^n(d\xi))
  \]
  on $D$ and from the last assertion of Proposition
  \ref{prop:jumpseps0} we know that $P^n$ leaves $D$ invariant for all
  $n \in \mathbb{N}$. By this invariance it thus suffices to check
  \begin{align}\label{eq:condIImod}
    \|A^n f_y -A^mf_y\|_{\rho} \leq \|f_y\|_{\infty}a_{nm}
  \end{align}
  for $ y \in K $, where $ K $ is a bounded set with respect to the
  $ Z $ norm. To this end consider for $f_y $ for $ y \in K $,
  \begin{align*}
    &\frac{|A^n f_y(\lambda) -A^mf_y(\lambda)|}{\rho(\lambda)}\\
    &\quad =\frac{f_y(\lambda) \langle g, \lambda \rangle }{\rho(\lambda)}|\int (\exp(\langle y, \nu \xi \rangle) -1 -\langle y, \nu \xi \rangle) (\mu^n(d\xi)- \mu^m(d\xi))|\\
    &\quad =\frac{f_y(\lambda) \langle g, \lambda \rangle }{\rho(\lambda)}|\int (\xi^2 \langle y, \nu  \rangle^2 + o(\langle y, \nu \rangle^2 \xi^2) ) (\mu^n(d\xi)- \mu^m(d\xi))|\\
    &\quad =\frac{f_y(\lambda) \langle g, \lambda \rangle }{\rho(\lambda)}\Big(\underbrace{|\int (\xi^2 \langle y, \nu \rangle^2 +  o(\langle y, \nu \rangle^2 \xi^2) )\sigma^2 (n^2 \delta_{\frac{1}{n}}(d\xi)-m^2 \delta_{\frac{1}{m}}(d\xi))|}_{a^1_{nm}}\\
    &\quad \quad + \underbrace{|\int
      (\xi^2 \langle y, \nu \rangle^2 +
      o(\langle y, \nu \rangle^2 \xi^2)
      )(1_{\{\xi \geq
      \frac{1}{n}\}}-1_{\{\xi \geq
      \frac{1}{m}\}}
      )m(d\xi)|}_{a^2_{nm}}\Big).
  \end{align*}
  From this we infer \eqref{eq:condIImod}, where
  $a_{nm}= C(a^1_{nm}+a^2_{nm})$ for some constant $C$, for \emph{all}
  $ y \in K $. The conditions of Theorem \ref{thm:approximation_gen}
  are therefore satisfied and we obtain a generalized Feller semigroup
  whose generator is given by \eqref{eq:genmain}.
  
  Concerning the second assumption, let us proceed similarly as in the
  proof of Proposition \ref{prop:epsjumps} (ii). Take
  $y \in \mathcal{D}$.  Consider the local martingale
  \begin{align}\label{eq:martmain}
    M^y_t= \langle y, \lambda_t \rangle - \langle y, \lambda_0 \rangle -\int_0^t \langle \mathcal{A} y, \lambda_s \rangle - \beta \langle y, \nu \rangle \langle g, \lambda_s \rangle ds. 
  \end{align}
  Let now $y$ be such that $\langle y, \nu \rangle =1$ and denote
  $M^y$ for such $y$ simply by $M$. By the carr\'e du champs formula
  the predictable quadratic variation\footnote{This is also denoted by
    angle brackets.} of $M$ is given by
  \[
    \langle M, M \rangle_t = \int_0^t (Af^2(\lambda_s)-2f(\lambda_s)
    Af(\lambda_s))ds=\left(\sigma^2 + \int \xi^2 m(d\xi)\right)
    \int_0^t \langle g, \lambda_s \rangle ds
  \]
  for $f(\lambda)=\langle y, \lambda \rangle$ and all
  $\langle y, \nu \rangle =1$, which shows in particular that $M$ is
  independent of the particular representative $y$.  Moreover, due to
  \eqref{eq:approxSPDE}
  \[
    \sum_{s \leq t} (\Delta M_s)^2= \lim_{n \to \infty}
    (\frac{1}{n^2}K^n_1 + \sum_{s \leq t} (\Delta
    N_{2,s}^n)^2)=\lim_{n \to \infty}\sum_{s \leq t} (\Delta
    N_{2,s}^n)^2
  \]
  where $K^n_1$ denotes the number of jumps of $N_1^n$ up to time $t$.
  Hence, the compensator of $\sum_{s \leq t} (\Delta M_s)^2$ is
  $\int_0^t \int \xi^2 m(d\xi) \langle g, \lambda_s \rangle ds$ and we
  deduce by the unique decomposition of $M$ into a continuous and
  purely discontinuous local martingale $M^c$ and $M^d$, that
  $\langle M^c, M^c \rangle = \sigma^2 \int_0^t \langle g, \lambda_s
  \rangle ds$. Therefore $M_t$ can be written as
  \[
    M_t= M_t^c+M^d_t =\int \sigma \langle g, \lambda_s \rangle dB_s +
    \int_0^t \int \xi (\mu^X(d\xi,ds) - \langle g, \lambda_s \rangle
    m(d\xi) ds),
  \]
  for some Brownian motion $B$ and random measure $\mu^X(d\xi,ds)
  $. Define now $X$ as
  \[
    X_t= \int_0^t \beta \langle g,\lambda_s \rangle ds +M_t.
  \]
  Then $X$ is of form \eqref{eq:X}. Furthermore note that $M^y$ of
  \eqref{eq:martmain} for $y \in \mathcal{D}$ is given by
  $M^y =\langle y, \nu \rangle M$ which can be seen again from the
  carr\'e du champs formula, yielding
  $\langle M^y, M^y \rangle = \langle y, \nu \rangle^2 \langle M, M
  \rangle$. We thus deduce from \eqref{eq:martmain}
  \[
    \langle y, \lambda_t \rangle=\langle y, \lambda_0 \rangle+
    \int_0^t \langle \mathcal{A} y, \lambda_s \rangle ds + \langle y,
    \nu \rangle X_t
  \]
  for $ y \in \mathcal{D} $.
  
  To pass from this weak formulation to the mild one, notice the
  following uniqueness assertion: fix the driving process $ X $ with
  c\`adl\`ag trajectories and $ {(\varphi_t(y))}_{t \geq 0} $, for
  $ y \in Y $, be another solution process with c\`ag trajectories
  depending linearly on $ y $.  We assume that
  \begin{align}\label{eq:varphi}
    \varphi_t(y)=\langle y, \lambda_0 \rangle+ \int_0^t \varphi_s(\mathcal{A} y)
    ds + \langle y, \nu \rangle X_t
  \end{align}
  for all $ y \in \mathcal{D} $ and furthermore for every sequence
  $ y_n \to y $ in $ Y $ we can choose a set of probability one where
  for every $ s \geq 0 $
  \begin{align}\label{eq:contphi}
    \varphi_s(y_n) \to \varphi_s(y)
  \end{align}
  as $ n \to \infty $ holds true. Note that the set will depend on the
  chosen sequence. Then the announced uniqueness assertion means that
  necessarily $ \varphi_t(y) = \langle y , \lambda_t \rangle $ for all
  $ y \in \mathcal{D}$. Indeed, the difference
  $ g_t(y) = \varphi_t(y) - \langle y , \lambda_t \rangle $ satisfies
  \[
    g_t(y) = \int_0^t g_s(\mathcal{A}y) ds
  \]
  with c\`ag trajectories. Therefore clearly
  $ s \mapsto g_{t-s}(S_sy) $ is constant simply by taking the
  derivative with respect to $ s $ and collecting terms for
  $ y \in \mathcal{D} $, hence
  $ 0 = g_0(S_ty) = g_t(y) = \varphi_t(y) - \langle y , \lambda_t
  \rangle $.

  The mild formulation then follows from this uniqueness result
  immediately. Indeed, define processes
  \[
    f_t(y) := \langle \mathcal{S}_t y, \lambda_0 \rangle +\int_0^t
    \langle \mathcal{S}_{t-s} y , \nu \rangle dX_s,
  \]
  for all $ y \in Y $ and $ t \geq 0 $.  For $ y \in \mathcal{D} $ we
  then have
  \begin{align*}
    & \langle  y, \lambda_0 \rangle+ \int_0^t f_s(\mathcal{A} y) ds + \langle y, \nu \rangle X_t \\ 
    &\quad =  \langle  y, \lambda_0 \rangle +
      \int_0^t \langle \mathcal{A} \mathcal{S}_{s} y, \lambda_0 \rangle ds +  \int_0^t \int_0^s \langle \mathcal{A} \mathcal{S}_{s-u} y, \nu \rangle dX_u ds + \langle y, \nu 
      \rangle X_t   \\
    &\quad =  \langle \mathcal{S}_t y , \lambda_0 \rangle + \int_0^t \int_u^t \langle \mathcal{A} \mathcal{S}_{s-u} y, \nu \rangle ds dX_u  + \langle y, \nu 
      \rangle X_t  \\
    &\quad  = \langle \mathcal{S}_t y , \lambda_0 \rangle + \int_0^t \langle \mathcal{S}_{t-u} y - y, \nu \rangle dX_u  + \langle y, \nu 
      \rangle X_t \\
    &\quad  = \langle \mathcal{S}_t y , \lambda_0 \rangle + \int_0^t \langle \mathcal{S}_{t-u} y, \nu \rangle dX_u \\
    &\quad  =  f_t(y) \, ,
  \end{align*}
  whence $ {(f_t(y))}_{t \geq 0} $ satisfies \eqref{eq:varphi}. Since
  $(f_t(y))$ is linear in $y$ and satisfies by basic dominated
  convergence for stochastic integrals the continuity assumption
  \eqref{eq:contphi}, the previous uniqueness result yields
  $ f_t(y) = \langle y , \lambda_t \rangle $ for all $ t \geq 0 $
  almost surely.

  The third assertion can be shown exactly along the lines of the
  proof of the second assertion of Proposition \ref{prop:jumpseps0}.

  The fourth assertion is a simple consequence of (ii) with $ y = g $.
    
  Finally to prove (v), set in (iii) $y_0 =u g$. Then, the mild
  solution of $y_t$ is given by
  \begin{align}\label{eq:VoltRic}
    y_t = u \mathcal{S}_t  g+ \int_0^t \mathcal{S}_{t-s} g\mathcal{R}(\langle y_s, \nu \rangle) ds.
  \end{align}
  Hence, by (iii) and Fubini's theorem
  \begin{align*}
    \mathbb{E}_{\lambda_0}[ \exp(u V_t)]&= \mathbb{E}_{\lambda_0}[ \exp(u \langle g, \lambda_t \rangle)]=\exp(\langle y_t, \lambda_0 \rangle)\\& = 
                                                                                                                                                 \exp(u \langle g, \mathcal{S}_t^* \lambda_0 \rangle+ \int_0^t \langle g, \mathcal{S}^*_{t-s} \lambda_0 \rangle \mathcal{R}(\langle y_s, \nu \rangle) ds).
  \end{align*}
  Setting now $\psi_t:= \langle y_t, \nu \rangle$ for $t >0$, we see
  from \eqref{eq:VoltRic}
  \[
    \psi_t= u \langle g, \mathcal{S}_t^* \nu \rangle + \int_0^t\langle
    g, \mathcal{S}^*_{t-s} \nu\rangle\mathcal{R}(\psi_s) ds= u
    K(t)+\int_0^t K(t-s) \mathcal{R}(\psi_s)ds,
  \]
  and in particular that it is well-defined for $t >0$.  This proves
  the assertion of uniqueness in law, too.
\end{proof}

\section{Concrete specifications}\label{sec:concrete}

The goal of this section is to concretely specify the abstract setting
introduced in Section \ref{sec:markovianlift_abstract} when we deal
with specific kernels of form \eqref{eq:kernel}.

\subsection{Lifting rough Volterra processes to
  measures}\label{sec:markovianlift}

As already outlined in the introduction, we here consider the case of
kernels given as Laplace transforms of some (signed) measure $\nu$ on
$\mathbb{R}_+$, i.e.
\[
  K(t)=\int_0^{\infty} e^{-xt} \nu(dx),
\]
such that $K(t) < \infty$ for all $t >0$ and
$K \in L^2_{\text{loc}}(\mathbb{R}_+, \mathbb{R})$. If $\nu$ is a
nonnegative measure, we are in the important setting of
\emph{completely monotone kernels}, into which for instance the
fractional kernels appearing in rough volatility models of the form
$K(t)=\Gamma(\alpha)^{-1}t^{\alpha-1}$ for
$\alpha \in (\frac{1}{2},1)$ fall.

To cast this into the framework considered in Section
\ref{sec:markovianlift_abstract}, in particular Assumption
\ref{ass:weak_existence}, let $Y$ be the space of bounded continuous
functions on the extended real half-line
$\overline{\mathbb{R}}_+:= \mathbb{R}_{+} \cup \{\infty\}$. We here
compactify $\mathbb{R}_+$, in order to make
$C_b(\overline{\mathbb{R}}_+, \mathbb{R})$ separable.  Its dual space
$Y^*$ is the space of the finite (signed) regular Borel measures on
the extend real half-line $ \overline{\mathbb{R}}_+ $. As above we
consider the weak-$*$-topology on $Y^*$ and a weight function
\[
  \rho(\lambda) = 1 + {|\lambda|}^2(\overline{\mathbb{R}}_{+}) \, ,
\]
where $ | \lambda | $ denotes the total variation norm of $ \lambda$.
Moreover, $ Z $ is the space of functions $ g \in Y $ such that
\[
  (x \mapsto x g(x) ) \in Y
\]
together with the operator norm on it,
i.e.~$ \| g \| = \sqrt{\| g \|^2 + \sup_{x \geq 0} | xg(x) |^2} $ for
$ g \in Z $. Its dual $Z^*$ is the space of regular Borel measures
$\nu$ on $\overline{\mathbb{R}}_+$ that satisfy
\[
  \int_0^{\infty} (\frac{1}{x} \wedge 1) \, \nu(dx) <\infty.
\]
As semigroup $(\mathcal{S}^*_t)_{t \geq 0}$ acting on $Z^*$ and $Y^*$
we consider the multiplication semigroup
\[
  \mathcal{S}_t^* \lambda= e^{-t\cdot} \lambda
\]
which leaves $ Y^* $ and $ Z^* $ invariant and acts in a strongly
continuous way thereon with respect to the strong norms.  Moreover,
$ \lambda \mapsto \mathcal{S}^*_t\lambda $ is weak-$*$-continuous on
$ Y^* $ and on $ Z^* $ for every $ t \geq 0 $ (considering the
weak-$*$-topology on both the domain and the image space). Whence, as
stated in Remark \ref{rem:examplespaces}, this semigroup then
satisfies all requirements of Assumption \ref{ass:weak_existence} (iv)
to (vi).

In terms of \eqref{eq:kernel}, $K$ can now be written as
\[
  K(t)=\int_0^\infty e^{-xt} \nu(dx)=\langle 1, \mathcal{S}_t^* \nu
  \rangle,
\]
where the pairing is here
$\langle y, \lambda \rangle= \int_0^{\infty} y(x) \lambda(dx)$. The
element $g \in Y$ appearing in \eqref{eq:kernel} is here simply the
constant function $1$. Note also that the requirements of Assumption
\ref{ass:semigroup} are satisfied as well. Indeed
$\mathcal{S}^*_t \nu \in Y^* $ for all $ t > 0 $ and
\[
  \int_0^t \| \mathcal{S}^*_s \nu \|^2_{Y^*} ds = \int_0^t
  (\int_0^{\infty} e^{-xs} \nu(dx))^2 ds = \int_0^t K(s)^2 ds < \infty
\]
for all $ t > 0 $, since $K$ is assumed to lie in
$L^2_{\text{loc}}(\mathbb{R}_+, \mathbb{R})$.
 
As in \eqref{eq:statespaceEw}, we define for fixed $ w > 0 $
\begin{equation}
  \begin{split}\label{eq:statespaceEwlambda}
    \mathcal{E}^w:=& \{ \lambda_0 \in Y^* |  \langle 1, \mathcal{S}_t^{*} \lambda_0 \rangle -  \int_0^t R^w(t-s) \langle 1, \mathcal{S}_s^{*} \lambda_0\rangle ds \geq 0 \text{ for all }  t \geq 0  \}   \\
    = & \{ \lambda_0 \in Y^* | \overline{\lambda}_t \geq 0
    \text{ with }\\
    &\quad \quad \quad \quad \lambda_t(dx) = e^{-xt} \lambda_0(dx) - w
    \int_0^t e^{-(t-s)x} \nu(dx) \overline{\lambda}_s ds \text{ for
      all } t \geq 0 \},
  \end{split}
\end{equation}
where $R^w$ denotes the resolvent of $ w K(t)$ and
$\overline{\lambda}=\langle 1,\lambda\rangle$ the total mass of
$\lambda$.

\begin{remark}\label{ex:local_compactness}
  Notice that the cone $ \mathcal{E}^w $ is \emph{not} locally compact
  with respect to the weak-$*$-topology. Indeed, consider the simplest
  case $ K = 1 $ and denote the Laplace of some signed measure
  $\lambda_0$ by
  \[
    h(t) = \int_0^\infty \exp(-xt) \lambda_0(dx).
  \]
  Then $\lambda_0$ lies in $ \mathcal{E}^w $ if $ h(t) \geq 0 $ and
  $ h'(t) \geq 0 $ for all $ t \geq 0 $. Choose a positive measure
  $ \lambda_0 $ whose support does \emph{not} contain $ [1,2] $. Then
  the signed measures
  $ \lambda_n = \lambda_0 (dx) + 1_{[1,2]}\frac{1}{x} \sin(n \pi x) dx
  \in \mathcal{E}^w $, for $ n \geq 1 $, and
  $ \lambda_n \to \lambda_0 $ in the weak-$*$-topology. On the other
  hand the total variation norm of $ \lambda_n $ does \emph{not}
  converge to the total variation norm of $ \lambda_0 $. Hence,
  $\lambda \mapsto 1+ |\lambda|^2(\overline{\mathbb{R}}_+)$ is not
  continuous and by Remark \ref{rem:local_compactness} the cone
  $ \mathcal{E}^w $ is not locally compact.
  
  In contrast, note that the cone of non-negative Borel measures on
  $\overline{\mathbb{R}}_+$ (or generally on a compactum) is locally
  compact in the weak-$*$-topology. In this case the total variation
  norm of a non-negative measures is nothing but the total mass,
  i.e.~a linear functional on it, thus continuous. The one-point
  compactification in this case is called Watanabe topology, see
  \cite{daw:12}.

\end{remark}

Assumption \ref{ass:crucial_abstract} now reads as follows:

\begin{assumption}\label{ass:crucial}
  Let $ \nu $ be such that $ \exp(-t.)\nu \in \mathcal{E}^w $ for all
  $ t > 0 $ and for all $ w > 0 $.
\end{assumption}

\begin{remark}
  As stated in Remark \ref{rem:crucial_abstract}, Assumption
  \ref{ass:crucial} is satisfied if $ R^w \geq 0 $ for all $w>0$ and
  $K \geq 0$, which is satisfied if $K$ is completely monotone.
\end{remark}

The state space that we consider for the stochastic processes in the
sequel is as of \eqref{statespaceE}, namely
\[
  \mathcal{E} = \cap_{w > 0} \mathcal{E}^w \, .
\]

\begin{remark}
  Comparing $\mathcal{E}$ with the state spaces considered in
  \cite{AbiElE:18b} (e.g. Equation (4.6)), we see that the conditions
  there translate to
  \[
    \{ \lambda_0 \in Y^* \, | \, t \mapsto \langle 1,
    \mathcal{S}_t^*\lambda_0 \rangle= \int_0^{\infty} e^{-tx}
    \lambda_0(dx) \in \mathcal{G}_K\},
  \]
  where $\mathcal{G}_K$ is defined in \cite[Equation
  (2.5)]{AbiElE:18b}. From \cite[Theorem A.2]{AbiElE:18b} we see that
  this describes a subspace of $\mathcal{E} $ in case a resolvent of
  the first kind exists for the kernel $ K $.
\end{remark}

It is interesting to consider the case when $ \nu $ is a measure with
finitely many atoms: we identify this measure setting then with some
$ \mathbb{R}^N $ and denote
$ \operatorname{supp}(\nu) = \{ 0 \leq x_1 < \cdots < x_N \} $.
\begin{proposition}
  Under Assumption \ref{ass:crucial} the state space $ \mathcal{E} $
  is a well defined closed cone given as the intersection of
  $\cap_{w >0} \mathcal{E}^w $ where for $w >0$
  \[
    \mathcal{E}^w=\{ \lambda_0 \in \mathbb{R}^N \, |\, \langle e^{t
      A^w} \lambda_0 , \mathbf{1} \rangle_{\mathbb{R}^N} \geq 0 \quad
    \text{for all } t \geq 0 \}
  \]
  with
  $A^w= \diag(-x_1, \ldots, -x_N)- w (\nu(x_1) \mathbf{1}, \ldots,
  \nu(x_N) \mathbf{1})^{\top}$. Here, $\mathbf{1}\in \mathbb{R}^N$ is
  the (column) vector consisting of ones.
\end{proposition}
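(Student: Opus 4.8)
The plan is to use that, when $\nu$ is finite and supported on the finite set $\{x_1,\dots,x_N\}$, the (infinite-dimensional) measure-valued equation \eqref{eq:lambda_abstract} degenerates to an explicit $N$-dimensional linear ODE, which turns all the objects of Section~\ref{sec:markovianlift_abstract} into elementary linear algebra.

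First I would isolate the finite-dimensional invariant subspace. Writing $\nu=\sum_{i=1}^N\nu_i\,\delta_{x_i}$ with $\nu_i:=\nu(\{x_i\})>0$, the set of measures $\lambda\ll\nu$ is $V:=\operatorname{span}\{\delta_{x_1},\dots,\delta_{x_N}\}\subset Y^*$, which is $N$-dimensional and weak-$*$-closed; I identify $\lambda=\sum_i c_i\delta_{x_i}\in V$ with the vector of atom masses $(c_1,\dots,c_N)\in\mathbb{R}^N$ (the identification $\lambda\mapsto d\lambda/d\nu$ gives a linearly isomorphic picture). Since $g=1$ here and $\mathcal{S}^*_t$ is multiplication by $e^{-t\cdot}$, it maps $\delta_{x_i}\mapsto e^{-tx_i}\delta_{x_i}$, hence leaves $V$ invariant, and $\nu\in V$. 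A direct computation then shows that the $V$-valued curve $t\mapsto\sum_i c_i(t)\delta_{x_i}$, where $c$ solves $\dot c(t)=A^w c(t)$ with $A^w_{ij}=-x_i\delta_{ij}-w\nu_i$ — i.e. $A^w=\diag(-x_1,\dots,-x_N)-w(\nu(x_1)\mathbf{1},\dots,\nu(x_N)\mathbf{1})^{\top}$ as claimed — solves the mild equation for \eqref{eq:lambda_abstract}; by uniqueness of the mild solution (Proposition~\ref{prop:existenceY*}) this curve is $\lambda_t$. In particular $\lambda_t\in V$ for all $t$, $\lambda_t\leftrightarrow e^{tA^w}c(0)$, and the total mass is $\langle1,\lambda_t\rangle=\sum_i c_i(t)=\langle e^{tA^w}c(0),\mathbf{1}\rangle_{\mathbb{R}^N}$.

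Next I would read off the state space. Substituting the last identity into the defining condition \eqref{eq:statespaceEwlambda}, intersected with $V$, gives exactly
\[
  \mathcal{E}^w=\bigl\{\lambda_0\in\mathbb{R}^N:\ \langle e^{tA^w}\lambda_0,\mathbf{1}\rangle_{\mathbb{R}^N}\ge0\ \text{for all }t\ge0\bigr\},
\]
which is the asserted formula. This set is a closed convex cone, being for each fixed $t$ the closed half-space (hence closed convex cone) $\{\lambda_0:\langle\lambda_0,e^{t(A^w)^{\top}}\mathbf{1}\rangle\ge0\}$, intersected over $t\ge0$; consequently $\mathcal{E}=\bigcap_{w>0}\mathcal{E}^w$ is again a closed convex cone. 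It is ``well defined'' as a state space in the sense of Definition~\ref{statespaceE} precisely because Assumption~\ref{ass:crucial} (which is Assumption~\ref{ass:crucial_abstract} in the present concrete setting) and Assumption~\ref{ass:semigroup} (already verified in this section, as $K\in L^2_{\mathrm{loc}}$) are in force; in particular $\mathcal{E}$ is nontrivial, since Assumption~\ref{ass:crucial} guarantees $\mathcal{S}^*_u\nu\in\mathcal{E}$ for all $u>0$, and $\mathcal{S}^*_u\nu\in V$, consistently with the finite-dimensional picture.

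I do not expect a genuine obstacle: the whole content is the reduction to the invariant subspace $V$. The only points needing a little care are (a) checking that the explicit $N$-dimensional curve really does solve the mild form of \eqref{eq:lambda_abstract}, so that uniqueness pins it down as $\lambda_t$ and yields $V$-invariance, and (b) keeping the identification of $V$ with $\mathbb{R}^N$ (atom masses versus $d\lambda/d\nu$) consistent with the matrix $A^w$ and the pairing $\langle\cdot,\mathbf{1}\rangle_{\mathbb{R}^N}$ — both are routine once the bookkeeping is fixed.
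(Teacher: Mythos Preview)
Your proposal is correct and follows essentially the same approach as the paper: write the deterministic equation \eqref{eq:lambda_abstract} in coordinates on the invariant subspace $V$, identify it as $\dot\lambda_t=A^w\lambda_t$, and then read off the claimed description of $\mathcal{E}^w$ from its definition. The paper's proof is much terser (it simply writes down the coordinate ODE and invokes the definition), whereas you add the careful bookkeeping about the identification with $\mathbb{R}^N$, the appeal to uniqueness in Proposition~\ref{prop:existenceY*}, and the explicit verification of the closed-cone property, but the substance is the same.
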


\begin{proof}
  Note that the deterministic equation as of
  \eqref{eq:lambda_abstract} has here the following form
  \[
    d\lambda_t(x_i)=-x_i\lambda_t(x_i) dt- w \nu(x_i)\sum_{j=1}^N
    \lambda_t(x_j) dt, \quad i \in \{1, \ldots,N\}.
  \]
  This can be written as $d\lambda_t=A^w \lambda_t dt$ so that the
  assertion follows from the definition of $\mathcal{E}^w$.
\end{proof}

\begin{example}
  There are simple two dimensional examples when
  $ \mathcal{E}^0 \neq \mathcal{E}^w $ for $ w > 0 $: indeed take
  $ x_0 = 0 $ and $ x_1 = 1 $ and
  $ \nu(dx) = \delta_0(dx) + \delta_1(dx) $. In this case
  \[
    \mathcal{E}^0 =\operatorname{cone}(e_2, e_1-e_2)
  \]
  whereas
  \[
    \mathcal{E}^w = \operatorname{cone} (e_{1}+e_{2} , e_{1} - e_{2})
    \, .
  \]
  Here $e_i$, $i=1,2$ denotes the canonical basis vectors of
  $\mathbb{R}^2$.  Observe here that $ \mathcal{E}^w $ does not depend
  on $ w $ for $ w > 0 $, but it changes when $ w \to 0 $.
\end{example}

Let us finally come to the SPDE formulation which will in particular
lead to rough affine volatility models. Consider the following measure
valued SPDE
\begin{align}\label{eq:SPDEmain1}
  d \lambda_t(dx) = -x \lambda_t(dx) dt + \nu(dx) dX_t,
\end{align}
where $(X_t)_{t \geq 0}$ is an It\^o semimartingale of the form
\begin{align}\label{eq:X1}
  X_t = \int_0^t \beta \overline{\lambda}_s ds+ \int_0^t
  \sigma \sqrt{\overline{\lambda}_t}dB_t+ \int_0^t \int \xi
  (\mu^X(d\xi,ds) - \overline{ \lambda}_s  m(d\xi) ds),
\end{align}
for some Brownian motion $B$ and random jump measure $\mu^X$ with
$\beta, \sigma \in \mathbb{R}$ and $m(d\xi)$ is a L\'evy measure on
$\mathbb{R}_{+}$ admitting a second moment. Recall here that
$\overline {\lambda}=\langle 1, \lambda \rangle$. As a corollary of
Theorem \ref{thm:main} we now obtain the following result.

\begin{theorem}
  Let Assumption \ref{ass:crucial} be in force.
  \begin{enumerate}
  \item Then the stochastic partial differential equation
    \eqref{eq:SPDEmain1} admits a unique Markovian solution
    $(\lambda_t)_{t\geq0} $ in $ \mathcal{E} $ given by a generalized
    Feller semigroup on $ \mathcal{B}^\rho(\mathcal{E}) $ whose
    generator takes the form
    \begin{align}\label{eq:genmain1}
      Af_y(\lambda)=f_y(\lambda) (\int_0^{\infty}-x y(x)\lambda (dx)+ \mathcal{R}(\langle y, \nu \rangle) \overline{\lambda} 
    \end{align}
    on the set of Fourier basis elements $f_y$ with
    $y \in \mathcal{D} \cap \mathcal{E}_*$, where
    $\mathcal{R}: \mathbb{R}_- \to \mathbb{R}$ is given by
    \begin{align}\label{eq:Rtrad1}
      \mathcal{R}(u)=\beta u+ \frac{1}{2} \sigma^2 u^2+\int_{\mathbb{R}_+}
      \left(\exp( u \xi )-1 - u\xi \right) m(d\xi).
    \end{align}
  \item This generalized Feller process allows to construct a
    probabilistically weak and analytically weakly mild solution of
    \eqref{eq:SPDEmain1}, i.e.~for $y \in Y$
    \[
      \langle y, \lambda_t \rangle = \langle e^{-t\cdot} y,
      \lambda_0\rangle +\int_0^t \langle e^{-(t-s)\cdot}y , \nu
      \rangle dX_s,
    \]
    in particular for every initial value the semimartingale $X$ can
    be constructed on an appropriate probabilistic basis.  For every
    $ y \in Y $ there exists a c\`ag version of the real valued
    process $ {(\langle y , \lambda_t \rangle )}_{t \geq 0} $.
    
  \item The affine transform formula is satisfied, i.e.
    \[
      \mathbb{E}_{\lambda_0}\left[ \exp( \langle y_0, \lambda_t
        \rangle)\right]=\exp(\langle y_t, \lambda_0 \rangle),
    \]
    where $y_t$ solves $\partial_t y_t=R(y_t)$ for all
    $y_0 \in \mathcal{E}_*$ and $t >0$ in the mild sense with
    $R: \mathcal{D} \cap \mathcal{E}_* \to \mathbb{R}$ given by
    \[
      R(y)(x) = -x y(x) + \mathcal{R}( \langle y , \nu \rangle)
    \]
    and $\mathcal{R}$ defined in \eqref{eq:Rtrad1}.  Furthermore,
    $y_t \in \mathcal{E}_*$ for all $t \geq 0$.
  \item For all $\lambda_0 \in \mathcal{E}$, the corresponding jump
    diffusion stochastic Volterra equation,
    $V_t = \overline{\lambda}_t=\langle 1, \lambda_t \rangle$, given
    by
    \begin{align}\label{eq:Voltrep1}
      \overline{\lambda}_t= \int_0^{\infty} e^{-tx} \lambda_0(dx)+
      \int_0^t K(t-s)dX_s
    \end{align}
    with $X$ given by \eqref{eq:X1} admits a unique probabilistically
    weak solution in $\mathbb{R}_+$ with c\`ag trajectories.
  \item[(v)] The Laplace transform of the Volterra equation $V_t$ is
    given by
    \begin{align*}
      \mathbb{E}_{\lambda_0}\left[\exp(u V_t)\right]=\exp(u h(t)+ \int_0^t h(t-s) \mathcal{R}(\psi_s) ds),
    \end{align*}
    where $h(t)=\int_0^{\infty} e^{-xt} \lambda_0(dx)$ and
    $\psi_t:=\langle y_t, \nu\rangle$ solves the Riccati Volterra
    equation
    \[
      \psi_t=u K(t)+\int_0^t K(t-s) \mathcal{R}(\psi_s) ds, \quad t >0
      \, .
    \]
    Hence the solution of the stochastic Volterra equation in
    \eqref{eq:Voltrep1} is unique in law.
  \end{enumerate}
\end{theorem}

\subsection{Lifting rough Volterra processes to forward
  curves}\label{sec:markovianlift_forward}

There are several ways to lift stochastic Volterra processes to
Markovian processes: in the previous section the focus was on the
completely monotone nature of many Volterra kernels. The lift that we
treat here considers instead the stochastic Volterra process together
with its conditional expectations from future values. As in the
Heath-Jarrow-Morton (HJM) case, this yields shift semigroups. This
Markovian lift also falls in the realm of Section
\ref{sec:markovianlift_abstract} and can be considered from the
perspective of generalized Feller semigroups.  Indeed, here $K$
(satisfying Conditions \ref{ass:K} below) is represented by
\[
  K(t)= \int_0^{\infty} \mathcal{S}^{\ast}_t K(x) \delta_0(dx),
\]
where $\mathcal{S}^*_t$ denotes the shift semigroup. In order meet all
the conditions of Section 4, we here assume the following:
\begin{assumption}\label{ass:K}
  \begin{enumerate}
  \item $K$ is assumed to lie in
    $L^2_{\text{loc}}(\mathbb{R}_+, \mathbb{R})$.
  \item there exists $L \in AC(\mathbb{R}_{+}, \mathbb{R})$, where
    $AC(\mathbb{R}_{+}, \mathbb{R})$ denotes the space of real-valued
    absolutely continuous functions on $\mathbb{R}_{+}$, such that
    $ K= L' $.
  \item There exists some strictly positive weight function $ v > 0 $
    such that
    $$ \int_0^{\infty} |K(x)|^2 v(x) dx = \int_0^{\infty} |L'(x)|^2 v(x) dx < \infty \, .$$
  \end{enumerate}
\end{assumption}

In terms of the underlying Banach spaces we we let $Y$ and in turn
also $Y^*$ be Hilbert spaces, similarly as in Filipovic \cite{F:01},
namely
\[
  Y=\{ y \in AC(\mathbb{R}_+, \mathbb{R}) \, | \, \int_0^{\infty}
  |y'(x)|^2 v(x) dx < \infty\}
\]
for the specified strictly positive weight function $ v > 0 $. We
endow $Y$ with the scalar product
\[
  \langle y, \lambda \rangle_{v}= y(0)\lambda(0) +\int y'(x)
  \lambda'(x) v(x) dx
\]
and denote the associated norm via $\| \cdot \|_{v}$.
 
\begin{assumption}\label{ass:stronglycontshift}
  We furthermore assume that the shift semigroup $ \mathcal{S}^* $
  acts in a strongly continuous way on $ Y^*$, which can be easily
  expressed in terms of the function $ v $, and that
  $\mathcal{S}^*_t K \in Y^*$ for $ t > 0 $.
\end{assumption}

\begin{remark}
  One can choose $ v(x) = \exp(\alpha x) $ for $ x \geq 0 $, which
  proved to be useful in case of term structures of interest rates.
\end{remark}

The weight function, in the sense of weighted spaces, is again given
by
\[
  \rho(\lambda) = 1 + \| \lambda \|_{v}^2 .
\]

Moreover, for $ Z $ we consider the subspace of functions vanishing at
$ 0 $ and whose first (weak) derivative lies in $ Y $ (with the
corresponding operator Hilbert space norm), then $ K \in Z^* $ can be
viewed as a weak derivative of a function $ L \in Y^* $ with norm
\[ {\|K\|}^2_{Z^*} := \int_0^\infty {|L'(x)|}^2 v(x) dx.
\]
In particular the value $ \infty $ at $ 0 $ is possible. By condition
(ii) and (iii) of Assumption \ref{ass:K}, $K \in Z^*$. Notice also the
following inclusions
\[
  Z \subset Y = Y^* \subset Z^* \, .
\]
In terms of \eqref{eq:kernel}, $K \in Z^*$ can now be written as
\[
  K(t)=\langle \mathcal{S}_t^* K,1 \rangle_{v}= ({S}_t^* K)(0).
\]
The element $g \in Y$ appearing in \eqref{eq:kernel} is here again
simply the constant function $1$.

Note that due to Assumption \ref{ass:stronglycontshift} the shift
semigroup $(\mathcal{S}^*_t)_{t \geq 0}$ acting on $Z^*$ and $Y^*$
satisfies all requirements of Assumption \ref{ass:weak_existence} (iv)
to (v).  Concerning (vi), the (pre)adjoint
$(\mathcal{S}_t)_{t \geq 0}$ acts on $Z$ and $Y$ via the following
formula obtained from adjoining with respect to the scalar product on
$Y$
\[
  \mathcal{S}_t y(x)=y(0)+ \int_0^{x \wedge t}
  y(0)\frac{1}{v(s)}ds+\int_{t}^{x \vee t}
  y'(s-t)\frac{v(s-t)}{v(s)}ds.
\]

The generator $A$ can be calculated accordingly by taking the
derivative with respect to $t$. Notice that the pre-dual semigroup
$ \mathcal{S} $ acts in strongly continuous way on $ Y $ (as dual
semigroup of a strongly continuous semigroup on a Hilbert space $Y$,
see \cite{engnag:00} for details) as well as on $ Z $. Furthermore the
requirements of Assumption \ref{ass:semigroup} are met as well.

As in \eqref{eq:statespaceEw}, we define for fixed $ w > 0 $
\begin{equation}
  \begin{split}
    \mathcal{E}^w:=& \{ \lambda_0 \in Y^* |  \langle 1, \mathcal{S}_t^{*} \lambda_0 \rangle -  \int_0^t R^w(t-s) \langle 1, \mathcal{S}_s^{*} \lambda_0\rangle ds \geq 0 \text{ for all }  t \geq 0  \}   \\
    = & \{ \lambda_0 \in Y^* | \lambda_t(0) \geq 0
    \text{ with }\\
    &\quad \quad \quad \quad \lambda_t(x) = \lambda_0(x+t) - w
    \int_0^t K(t-s+x) \lambda_s(0) ds \text{ for all } t \geq 0 \},
  \end{split}
\end{equation}
where $R^w$ denotes the resolvent of $ w K(t)$. Note that
$\langle 1,\lambda\rangle = \lambda(0)$.  Again the state space will
be.
\[
  \mathcal{E} = \cap_{w > 0} \mathcal{E}^w \, .
\]
 
Assumption \ref{ass:crucial_abstract} now reads as follows:

\begin{assumption}\label{ass:crucial1}
  Let $ K$ be such that $ K(t+\cdot) \in \mathcal{E}^w $ for all
  $ t > 0 $ and for all $ w > 0 $.
\end{assumption}

The SPDE that we consider in the present setting is
\begin{align}\label{eq:SPDEmain2}
  d \lambda_t(x) = \frac{d}{dx}\lambda_t(x) dt + K(x)dX_t,
\end{align}
where $(X_t)_{t \geq 0}$ is an It\^o semimartingale of the form
\begin{align}\label{eq:X2}
  X_t = \int_0^t \beta \lambda_s(0) ds+ \int_0^t
  \sigma \sqrt{\lambda_s(0)}dB_t+ \int_0^t \int \xi
  (\mu^X(d\xi,ds) - \lambda_s(0)  m(d\xi) ds),
\end{align}
for some Brownian motion $B$ and random jump measure $\mu^X$ with
$\beta, \sigma \in \mathbb{R}$ and $m(d\xi)$ is a L\'evy measure
admitting a second moment.  As a corollary of Theorem \ref{thm:main}
we now obtain the following result.

\begin{theorem}
  Let Assumptions \ref{ass:K}, \ref{ass:stronglycontshift} and
  \ref{ass:crucial1} be in force.
  \begin{enumerate}
  \item Then the stochastic partial differential equation
    \eqref{eq:SPDEmain2} admits a unique Markovian solution
    $(\lambda_t)_{t\geq0} $ in $ \mathcal{E} $ given by a generalized
    Feller semigroup on $ \mathcal{B}^\rho(\mathcal{E}) $ whose
    generator takes the form
    \begin{align*}
      Af_y(\lambda)=f_y(\lambda) (\langle \mathcal{A} y, \lambda \rangle_{v}
      + \mathcal{R}(\langle y, K \rangle_{v}) \lambda(0)
    \end{align*}
    on the set of Fourier basis elements $f_y$ with
    $y \in \mathcal{D} \cap \mathcal{E}_*$ where
    $\mathcal{R}: \mathbb{R}_- \to \mathbb{R}$ is given by
    \begin{align*}
      \mathcal{R}(u)=\beta u+ \frac{1}{2} \sigma^2 u^2+\int_{\mathbb{R}_+}
      \left(\exp( u \xi )-1 - u\xi \right) m(d\xi).
    \end{align*}
  \item This generalized Feller process allows to construct a
    probabilistically weak and analytically weakly mild c\`ag solution
    of \eqref{eq:SPDEmain2}, i.e.~in particular the point evaluations
    (being linear functionals) satisfy
    \[
      \lambda_t(x) = \lambda_0(t+x) +\int_0^t K(t-s+x) dX_s,
    \]
    are c\`ag, and for every initial value the semimartingale $X$ can
    be constructed on an appropriate probabilistic basis.
  \item The affine transform formula is satisfied, i.e.
    \[
      \mathbb{E}_{\lambda_0}\left[ \exp( {\langle y_0, \lambda_t
          \rangle}_v)\right]=\exp({\langle y_t, \lambda_0 \rangle}_v),
    \]
    where $y_t$ solves $\partial_t y_t=R(y_t)$ for all
    $y_0 \in \mathcal{E}_*$ and $t >0$ in the mild sense with
    $R: \mathcal{D} \cap \mathcal{E}_* \to \mathbb{R}$ given by
    \[
      R(y)(x) = \mathcal{A} y(x) + \mathcal{R}( \langle y , \nu
      \rangle_{v})
    \]
    with $\mathcal{R}$ defined in \eqref{eq:Rtrad}.  Furthermore,
    $y_t \in \mathcal{E}_*$ for all $t \geq 0$.
  \item For all $\lambda_0 \in \mathcal{E}$, the corresponding jump
    diffusion stochastic Volterra equation, i.e.
    \begin{align}\label{eq:Voltrep2}
      \lambda_t(0)= \lambda_0(t)+ \int_0^t K(t-s)dX_s
    \end{align}
    admits a probabilistically weak solution in $\mathbb{R}_+$ with
    c\`ag trajectories.
  \item[(v)] The Laplace transform of the Volterra equation $V_t$ is
    given by
    \begin{align*}
      \mathbb{E}_{\lambda_0}\left[\exp(u \lambda_t(0))\right]=\exp(u h(t)+ \int_0^t h(t-s) \mathcal{R}(\psi_s) ds),
    \end{align*}
    where $h(t)=\lambda_0(t)$ and $\psi_t:=\langle y_t, \nu\rangle$
    solves the Riccati Volterra equation
    \[
      \psi_t=u K(t)+\int_0^t K(t-s) \mathcal{R}(\psi_s) ds, \quad t >0
      \, .
    \]
    Hence the solution of the stochastic Volterra equation in
    \eqref{eq:Voltrep2} is unique in law.
  \end{enumerate}
\end{theorem}

\end{document}